\newtheorem{theorem}{Theorem}
\newtheorem{lemma}{Lemma}
\theoremstyle{definition}
\theoremstyle{remark}
\newtheorem{remark}{Remark}
\newtheorem{corollary}{Corollary}
\numberwithin{equation}{section}
\newcommand\restr[2]{{
		\left.\kern-\nulldelimiterspace 
		#1 
		\vphantom{\big|} 
		\right|_{#2} 
}}
\numberwithin{theorem}{section}
\numberwithin{lemma}{section}
\numberwithin{remark}{section}
\numberwithin{exercise}{section}
\numberwithin{corollary}{section}
\begin{document}

\title[The Poincar\'{e}-Bendixson theory in Banach spaces]
{The Poincar\'{e}-Bendixson theory for certain compact semiflows in Banach spaces}

\author{Mikhail Anikushin}
\address{Department of
	Applied Cybernetics, Faculty of Mathematics and Mechanics,
	Saint Petersburg State University, Universitetskiy prospekt 28, Peterhof, St. Petersburg 198504, Russia}
\address{Euler International Mathematical Institute, St. Petersburg Department of Steklov Mathematical Institute of Russian Academy of Sciences, 27 Fontanka, St. Petersburg 191011, Russia}
\email{demolishka@gmail.com}
\date{\today}
\thanks{The reported study was funded by RFBR according to the research project № 20-31-90008; by a grant in the subsidies form from the federal budget for the creation and development of international world-class math centers, agreement between MES RF and PDMI RAS No. 075-15-2019-1620; by V.~A.~Rokhlin grant for young mathematicians of St.~Petersburg.}


\subjclass[2010]{35B42, 37B25, 34K11, 35B40}

\keywords{Poincar\'{e}-Bendixson theory, Inertial Manifolds, Quadratic cones, Orbital stability, Frequency theorem, Delay equations, Parabolic equations}

\begin{abstract}
We study semiflows satisfying a certain squeezing condition with respect to a quadratic functional in some Banach space. Under certain compactness assumptions from our previous results it follows that there exists an invariant manifold, which is under more restrictive conditions is an inertial manifold. In the case of a two-dimensional manifold we obtain an analog of the Poincar\'{e}-Bendixson theorem on the trichotomy of $\omega$-limit sets. Moreover, we obtain conditions for the existence of an orbitally stable periodic orbit. Our approach unifies a series of papers by R.~A.~Smith, establishes their connection with the theory of inertial manifolds and opens a new perspective of applications. To verify the squeezing property in applications we use recently developed versions of the frequency theorem, which guarantee the existence of the required quadratic functional if some frequency-domain condition is satisfied. We present applications of our results for nonlinear delay equations in $\mathbb{R}^{n}$ and semilinear parabolic equations and discuss perspectives of applications to parabolic problems with delay and boundary controls.
\end{abstract}

\maketitle


\section{Introduction}
We start with a precise statement of our main results.

Let $\mathbb{E}$ be a real Banach space and $\varphi^{t} \colon \mathbb{E} \to \mathbb{E}$, where $t \geq 0$, be a semiflow on $\mathbb{E}$, i.~e.
\begin{description}
	\item[1)] $\varphi^{0}(v)=v$ for all $v \in \mathbb{E}$
	\item[2)] $\varphi^{t+s}(v)=\varphi^{t}(\varphi^{s}(v))$ for all $v \in \mathbb{E}$ and $t, s \geq 0$.
	\item[3)] The map $\mathbb{R}_{+} \times \mathbb{E} \to \mathbb{E}$ defined as $(t,v) \mapsto \varphi^{t}(v)$ is continuous.
\end{description}
For the sake of brevity we sometimes denote the semiflow by $\varphi$. 

Let $\langle v , f \rangle:= f(v)$ denote the pairing between $v \in \mathbb{E}$ and $f \in \mathbb{E}^{*}$. A bounded linear operator $P \colon \mathbb{E} \to \mathbb{E}^{*}$, i.~e. $P \in \mathcal{L}(\mathbb{E};\mathbb{E}^{*})$, is called \textit{symmetric} if $\langle v , P w \rangle = \langle w, P v \rangle$ for all $v, w \in \mathbb{E}$. For a given subspace $\mathbb{L} \subset \mathbb{E}$ we say that $P$ is \textit{positive} (resp. \textit{negative}) on $\mathbb{L}$ if $\langle v , P v \rangle > 0$ (resp. $\langle v , P v \rangle < 0$) for all non-zero $v \in \mathbb{L}$.

Let $\mathbb{E}$ be continuously embedded into some Hilbert space\footnote{In fact, we need only the Banach space structure on $\mathbb{H}$, but it will be convenient for applications and discussions to assume that $\mathbb{H}$ is a Hilbert space.} $\mathbb{H}$. We identify elements of $\mathbb{E}$ and $\mathbb{H}$ under the embedding. We denote the norms in $\mathbb{E}$ and $\mathbb{H}$ by $\|\cdot\|_{\mathbb{E}}$ and $|\cdot|_{\mathbb{H}}$ respectively. Now our main conditions imposed on $\varphi$ can be introduced as follows.
\begin{description}
	\item[\textbf{(H1)}] There is a continuous linear operator $P \in \mathcal{L}(\mathbb{E};\mathbb{E}^{*})$, symmetric and such that $\mathbb{E}$ splits into the direct sum of some subspaces $\mathbb{E}^{+}$ and $\mathbb{E}^{-}$, i.~e. $\mathbb{E} = \mathbb{E}^{+} \oplus \mathbb{E}^{-}$, such that $P$ is positive on $\mathbb{E}^{+}$ and negative on $\mathbb{E}^{-}$.
	\item[\textbf{(H2)}] For some integer $j \geq 0$ we have $\dim \mathbb{E}^{-} = j$.
	\item[\textbf{(H3)}] For $V(v):=\langle v,Pv \rangle$ and some numbers $\delta>0$, $\tau_{V} \geq 0$ and $\nu>0$ we have
	\begin{equation}
	\begin{split}
	\label{EQ: SqueezingProperty}
	e^{2\nu r}V(\varphi^{r}(v_{1})-\varphi^{r}(v_{2}))-e^{2\nu l} V(\varphi^{l}(v_{1})-\varphi^{l}(v_{2})) \leq \\ \leq -\delta\int_{l}^{r} e^{2\nu s} |\varphi^{s}(v_{1})-\varphi^{s}(v_{2})|^{2}_{\mathbb{H}}ds,
	\end{split}	
	\end{equation}
	satisfied for every $v_{1},v_{2} \in \mathbb{E}$ and $0 \leq l \leq r$ such that $r -l \geq \tau_{V}$.
\end{description}
\begin{remark}
	Under hypotheses \textbf{(H1)} and \textbf{(H2)} the set $\mathcal{C}_{V}:=\{ \langle v,Pv \rangle \leq 0 \}$ is a $j$-\textit{dimensional quadratic cone} in $\mathbb{E}$. From \textbf{(H3)} we have that the semiflow is strictly monotone w.~r.~t. the cone $\mathcal{C}_{V}$. Namely, if $v_{1}-v_{2} \in \mathcal{C}_{V}$ for some distinct points $v_{1},v_{2} \in \mathbb{E}$, then $\varphi^{t}(v_{1})-\varphi^{t}(v_{2}) \in \operatorname{Int}\mathcal{C}_{V}$ for all $t > 0$. For $j=1$ the quadratic cone $\mathcal{C}_{V}$ is a union of two convex closed cones, say $\mathcal{C}^{+}_{V}$ and $\mathcal{C}^{-}:=-\mathcal{C}^{+}_{V}$. In this case \textbf{(H3)} gives the monotonicity of $\varphi$ w.~r.~t. the partial order given by $\mathcal{C}^{+}_{V}$. Such kind of monotonicity is considered in the classical theory of monotone dynamical systems (see, for example, D.~N.~Cheban and Z.~Lui \cite{Cheban2019}; H.~L.~Smith \cite{SmithHL2017}). For $j>1$ there is no such order (due to the lack of convexity). However, the cone $\mathcal{C}_{V}$ (a cone of rank $j$ in the terminology of \cite{Sanches2009,FengWangWu2017}) defines a pseudo-order and the mentioned monotonicity also leads to certain limitations for the semiflow. Semiflows, which are monotone w.~r.~t. such high-rank cones, were studied in finite-dimensional spaces  by L.~A.~Sanchez \cite{Sanches2009} and in the context of Banach spaces by L.~Feng et al. \cite{FengWangWu2017}. In particular, in these works weaker analogs of the Poincar\'{e}-Bendixson theorem were obtained. However, it seems that the stability results, which we present below, along with some other topological consequences cannot be deduced from the abstract pseudo-monotonicity used in \cite{Sanches2009,FengWangWu2017}. In fact, our approach is a generalization of various theories of inertial manifolds and besides the monotonicity there is also a squeezing property contained in \textbf{(H3)} (see the discussion below).
\end{remark}
\begin{remark}
	In applications to differential equations, \textbf{(H3)} can be verified by checking a balance inequality (called \textit{frequency-domain condition} or \textit{frequency inequality}) between the linear and nonlinear parts. This inequality usually contains some norm of a modified resolvent and the Lipschitz constant of the nonlinearity. In the case of semilinear parabolic equations, the simplest form of this inequality is known as the Spectral Gap Condition (see Section \ref{SEC: ReactionDiffusionSmith}). Sometimes we are interested only in the dynamics on a certain bounded invariant set $\mathcal{S}$ since globally nonlinearities may not be Lipschitz. In this case truncation procedures, which change nonlinearities outside $\mathcal{S}$ with preserving the Lipschitz constants, are used.
\end{remark}

Another main assumption is the following compactness property.
\begin{description}
	\item[\textbf{(COM)}] There is $\tau_{com}>0$ such that the map $\varphi^{\tau_{com}} \colon \mathbb{E} \to \mathbb{E}$ is compact.
\end{description}
We will also use some smoothing estimate described as follows.
\begin{description}
	\item[\textbf{(S)}] There is $\tau_{S} \geq 0$ and a constant $C_{S} > 0$ such that
	\begin{equation}
		\| \varphi^{\tau_{S}}(v_{1}) - \varphi^{\tau_{S}}(v_{2}) \|_{\mathbb{E}} \leq C_{S} |v_{1}-v_{2}|_{\mathbb{H}}
	\end{equation}
    for all $v_{1},v_{2} \in \mathbb{E}$.
\end{description}

For basic facts from the theory of dynamical systems we refer to the monograph of I.~Chueshov \cite{Chueshov2015}. For a point $v_{0} \in \mathbb{E}$ we denote its positive semi-orbit by $\gamma^{+}(v_{0})$, i.~e. $\gamma^{+}(v_{0})=\bigcup_{t \geq 0} \varphi^{t}(v_{0})$, and we denote its $\omega$-limit set by $\omega(v_{0}):=\bigcap_{s \geq 0}\overline{\bigcup_{t \geq s}\varphi^{t}(v_{0})}$. A \textit{complete trajectory} is a continuous map $v \colon \mathbb{R} \to \mathbb{E}$ such that the equality $v(t+s)=\varphi^{t}(v(s))$ holds for all $t \geq 0$ and $s \in \mathbb{R}$. In this case we say that $v(\cdot)$ is \textit{passing through $v(0)$}. If there is a unique complete trajectory $v(\cdot)$ passing through $v_{0}$ we also consider its $\alpha$-limit set $\alpha(v_{0}):=\bigcap_{s \leq 0}\overline{ \bigcup_{t \leq s}v(t) }$, its negative semi-orbit $\gamma^{-}(v_{0}) := \bigcup_{t \leq 0} v(t)$ and its complete orbit $\gamma(v_{0}) := \gamma^{+}(v_{0}) \cup \gamma^{-}(v_{0})$. We will sometimes call the $\omega$-limit set of a point $v_{0}$ the $\omega$-limit of its orbit $\gamma_{+}(v_{0})$ or its semi-trajectory $t \mapsto \varphi^{t}(v_{0})$.

One of our main results is the following theorem.
\begin{theorem}
	\label{TH: PBTrichotomy}
	Let the semiflow $\varphi$ satisfy \textbf{(H1)}, \textbf{(H2)} with $j=2$, \textbf{(H3)}, \textbf{(COM)} and \textbf{(S)}; then the $\omega$-limit set $\omega(w_{0})$ of any point $w_{0} \in \mathbb{E}$ with the bounded positive semi-orbit is one of the following:
	\begin{enumerate}
		\item[\textbf{(T1)}] A stationary point;
		\item[\textbf{(T2)}] A periodic orbit;
		\item[\textbf{(T3)}] A union of some set of stationary points $\mathcal{N}$ and a set of complete orbits whose $\alpha$- and $\omega$-limit sets lie in $\mathcal{N}$.
	\end{enumerate}
\end{theorem}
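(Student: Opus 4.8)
The plan is to reduce the infinite-dimensional dynamics on $\Omega := \omega(w_0)$ to a flow on a planar compactum and then invoke the classical Poincar\'{e}--Bendixson theory. First I would record the standard structural facts: by \textbf{(COM)} and boundedness of $\gamma^{+}(w_0)$, the set $\Omega$ is nonempty, compact, connected and invariant, and through every point of $\Omega$ there passes a complete trajectory with range contained in the (bounded, by \textbf{(S)}) set $\Omega$. The role of \textbf{(S)} here is to guarantee that $\Omega$ is bounded in $\mathbb{E}$ (not merely in $\mathbb{H}$) so that the quadratic form $V$ is uniformly controlled along all complete trajectories.

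The heart of the reduction is a monotonicity consequence of \textbf{(H3)}. Fix two complete trajectories $v_1(\cdot),v_2(\cdot)$ in $\Omega$ and put $w := v_1-v_2$. Applying \eqref{EQ: SqueezingProperty} with $r$ fixed and letting $l\to-\infty$, boundedness of $w(l)$ forces $e^{2\nu l}V(w(l))\to 0$, so that
\[ e^{2\nu r} V(w(r)) \leq -\delta \int_{-\infty}^{r} e^{2\nu s} |w(s)|_{\mathbb{H}}^{2}\, ds \leq 0. \]
Hence $V(w(r))\le 0$ for all $r$, and the strict positivity of the integral whenever $w\not\equiv 0$ yields $V(w(r))<0$, i.e. $w(r)\in\operatorname{Int}\mathcal{C}_V$, for all $r$ as soon as $v_1\ne v_2$. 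Two immediate corollaries: (i) if two complete trajectories in $\Omega$ agree at one instant they coincide (else their difference would lie in $\operatorname{Int}\mathcal{C}_V$ while vanishing at that instant), so the semiflow restricts to a genuine \emph{continuous flow} on $\Omega$; (ii) differences of distinct points of $\Omega$ lie in $\operatorname{Int}\mathcal{C}_V$.

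Next I would carry out the two-dimensional reduction, realizing the invariant manifold of the earlier results. Let $P^{-}\colon \mathbb{E}\to\mathbb{E}^{-}$ be the projection along $\mathbb{E}^{+}$. If $p\ne q$ in $\Omega$ satisfied $P^{-}p=P^{-}q$, then $p-q\in\mathbb{E}^{+}\setminus\{0\}$ would give $V(p-q)>0$ by \textbf{(H1)}, contradicting $p-q\in\operatorname{Int}\mathcal{C}_V$. Thus $P^{-}|_{\Omega}$ is a continuous injection of the compactum $\Omega$ into $\mathbb{E}^{-}\cong\mathbb{R}^{2}$ (here $\dim\mathbb{E}^{-}=2$ by \textbf{(H2)}), hence a homeomorphism onto $K:=P^{-}(\Omega)$ conjugating the flow on $\Omega$ to a continuous flow $\psi$ on the planar compact set $K$. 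Through this conjugacy, $\Omega$ inherits the alternatives T1--T3 from the planar Poincar\'{e}--Bendixson trichotomy: a single point is a stationary point, a Jordan orbit is a periodic orbit, and the remaining case is a union of equilibria with connecting orbits whose $\alpha$- and $\omega$-limits are equilibria.

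The main obstacle is the final step: the classical planar theory is stated for flows on open regions, whereas $\psi$ is a priori only a continuous flow on the compactum $K$, so its arguments (transversals plus the Jordan curve theorem) must be justified in this setting. The decisive geometric input I would extract from corollary (ii) is the \emph{monotone-crossing property}: for a non-stationary $p_0\in\Omega$, successive returns $p_1,p_2,\dots$ of its orbit to a local cross-section have pairwise differences in $\operatorname{Int}\mathcal{C}_V$, and projecting by $P^{-}$ forces the crossing points to be ordered consistently along a transversal arc at $P^{-}p_0$; this is exactly the no-false-return property that drives the Jordan-curve dichotomy. Additional technical care is needed to embed $\psi$ into (or otherwise localize it within) an ambient planar flow so that these arguments apply verbatim, to treat a possibly nondegenerate continuum of stationary points in case T3, and to absorb the time-lag $\tau_{V}$ appearing in the finer monotonicity estimates (which, however, does not affect the limit $l\to-\infty$ used above).
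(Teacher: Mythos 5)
Your preparatory steps (the structural facts about $\Omega=\omega(w_{0})$, the limit $l\to-\infty$ in \textbf{(H3)} giving $V(v_{1}(t)-v_{2}(t))<0$ for distinct complete trajectories in $\Omega$, and the resulting injectivity of the projection onto $\mathbb{E}^{-}$) are sound and correspond to Lemma \ref{LEM: AmenableLemma} of the paper, modulo the small point that \textbf{(H3)} requires $0\le l\le r$, so the limit must be taken after a time shift along the complete trajectories. But the final step, which you yourself flag as ``the main obstacle,'' is a genuine gap, and it is not patchable along the lines you sketch. Reducing to a continuous flow $\psi$ on the planar \emph{compactum} $K=P^{-}(\Omega)$ loses exactly the information that makes the trichotomy true. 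First, $K$ typically has empty interior in $\mathbb{R}^{2}$ (e.g.\ when $\Omega$ is a periodic orbit), so there are no flow boxes or transversal arcs for $\psi$ in any classical sense, and the Jordan curve theorem has nothing to act on. Second, and more fundamentally, the trichotomy is \emph{false} for abstract compact connected invariant sets of planar flows satisfying all the properties you establish: a closed annulus foliated by concentric periodic orbits under a rotation flow is compact, connected, invariant, embeds in the plane, and every orbit meets any cross-section in a single point, yet it is neither a stationary point, nor a periodic orbit, nor a union of equilibria and connecting orbits. What rules this out for $\omega(w_{0})$ is that $\Omega$ is the limit set of the \emph{exterior} orbit $\varphi^{t}(w_{0})$, which lives off the invariant manifold; your plan discards this orbit after the first paragraph and never uses it again, so the conclusion cannot follow from what you have proved.

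The paper's proof supplies precisely the two ingredients your proposal lacks. First, it works not with $\Omega$ alone but with the globally defined two-dimensional manifold $\mathfrak{A}$ (the amenable set), which by Theorem \ref{TH: MainReductionTheorem} is homeomorphic to all of $\mathbb{E}^{-}\cong\mathbb{R}^{2}$ via $\Pi$ and carries a genuine flow; this provides the ambient planar phase space in which H\'{a}jek's theorem yields transversals and the Jordan curve theorem applies. Second, it uses the attraction property \textbf{(A3)} of Corollary \ref{COR: AmenableManifoldProp} --- $\|\varphi^{t}(w_{0})-\Phi(\Pi\varphi^{t}(w_{0}))\|_{\mathbb{E}}\to 0$ --- so that the shadow curve $\Phi(\Pi\varphi^{t}(w_{0}))\subset\mathfrak{A}$ serves as a surrogate for a planar orbit accumulating on $\Omega$; this is what powers the Bendixson-type Lemma \ref{LEM: BendixsonLemma} (each trajectory in $\Omega$ meets a transversal at most once) and Lemma \ref{LEM: PeriodicPointInLimitSet} (a periodic point in $\omega(w_{0})$ forces $\omega(w_{0})$ to be that periodic orbit, which is exactly what excludes the annulus-type configuration). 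Your proposal never invokes \textbf{(A3)} at all, and it assigns \textbf{(S)} the wrong role: \textbf{(S)} is not needed to bound $\Omega$ in $\mathbb{E}$ (compactness of $\Omega$ already follows from \textbf{(COM)}); it is needed, together with \textbf{(COM)}, to prove Lemma \ref{LEM: AmenableLemma} and Theorem \ref{TH: MainReductionTheorem}, i.e.\ to build the manifold $\mathfrak{A}$ and its attracting property in the $\mathbb{E}$-norm. To repair your argument you would essentially have to reconstruct both of these ingredients, at which point you would be reproducing the paper's proof.
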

\begin{corollary}
	\label{COR: Th1CorAlphaLimit}
	Under the hypotheses of Theorem \ref{TH: PBTrichotomy} the trichotomy \textbf{(T1)}, \textbf{(T2)}, \textbf{(T3)} holds for the $\alpha$-limit set of any complete trajectory bounded in the past.
\end{corollary}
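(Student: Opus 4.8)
The plan is to deduce Corollary 1.1 from Theorem 1.1 by a time-reversal argument applied to the $\alpha$-limit set. The essential observation is that an $\alpha$-limit set of a complete trajectory $v(\cdot)$ bounded in the past plays, for the backward dynamics, exactly the role that an $\omega$-limit set plays for the forward dynamics. So I would like to reduce the corollary to Theorem 1.1 applied to a suitable reversed semiflow, provided I can check that the reversed object still satisfies hypotheses \textbf{(H1)}--\textbf{(H3)}, \textbf{(COM)}, and \textbf{(S)} with $j=2$.

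First I would make precise the reversal. Let $v\colon \mathbb{R}\to\mathbb{E}$ be the complete trajectory, bounded in the past, whose $\alpha$-limit set $\alpha(v_0)$ we wish to analyze. The key point is that $\alpha(v_0)$ is compact, invariant, and that the semiflow restricted to a neighborhood of $\alpha(v_0)$ (or to the closure of the negative semi-orbit) consists of trajectories that extend to complete bounded trajectories; on such an invariant set the forward semiflow $\varphi^t$ is injective (this injectivity follows from the strict monotonicity in the remark after \textbf{(H3)}, since two distinct points whose difference lies in $\mathcal{C}_V$ are separated, and more generally \textbf{(H3)} forces backward uniqueness). I would therefore define a genuine flow on the compact invariant set $K:=\overline{\gamma^{-}(v_0)}$ by $\psi^t := (\varphi^{-t})$ for $t\le 0$ understood via the unique backward continuation, and observe that $\alpha(v_0)$ for $\varphi$ coincides with the $\omega$-limit set of $v_0$ under the reversed flow $\psi$.

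The heart of the matter, and the step I expect to be the main obstacle, is verifying that the reversed dynamics inherits the squeezing/monotonicity structure. Reversing time in \eqref{EQ: SqueezingProperty} swaps the roles of $l$ and $r$ and flips the sign of the integral term, so the quadratic form $V$ becomes \emph{increasing} along the reversed flow with the integral term now having the favorable sign relative to the cone $-\mathcal{C}_V$. Concretely, the natural candidate is to keep the same operator $P$ but reverse the splitting, so that the positive and negative subspaces are interchanged; since $\dim\mathbb{E}^{-}=2$, the reversed cone is again a two-dimensional quadratic cone of the same rank $j=2$, and the monotonicity is preserved. I would write \eqref{EQ: SqueezingProperty} for the backward trajectories and rearrange it into the exact form of \textbf{(H3)} for $\psi$, being careful that the exponential weights $e^{2\nu s}$ transform correctly under $s\mapsto -s$; this is the routine but delicate computation. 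The genuine obstacle is that $\psi$ is only defined on the compact invariant set $K$, not on all of $\mathbb{E}$, so hypotheses \textbf{(COM)} and \textbf{(S)}, which are global statements about $\varphi^{\tau}\colon\mathbb{E}\to\mathbb{E}$, must be reinterpreted as statements about the restricted flow. Here I would use that $K$ is compact (automatic continuity and compactness of the restriction of a continuous flow to a compact invariant set), so that the trichotomy of Theorem 1.1 can be invoked on the abstract compact-flow level rather than requiring the full Banach-space hypotheses to hold for $\psi$ globally.

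Alternatively, and perhaps more cleanly, I would avoid constructing a global reversed semiflow and instead inspect the proof of Theorem 1.1 directly, noting that every topological and monotonicity argument used there applies verbatim to a compact invariant set carrying the cone structure and admitting backward uniqueness. Since an $\alpha$-limit set of a past-bounded complete trajectory is precisely such a set, the three alternatives \textbf{(T1)}, \textbf{(T2)}, \textbf{(T3)} follow by the same reasoning with time reversed. I would remark that the symmetry between $\omega$- and $\alpha$-limit sets is exactly the symmetry $t\mapsto -t$ combined with $P\mapsto P$ and $\mathbb{E}^{\pm}\mapsto\mathbb{E}^{\mp}$, which is why no new ideas beyond the proof of Theorem 1.1 are needed.
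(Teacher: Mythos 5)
Your core intuition---that the $\alpha$-limit case should follow from the $\omega$-limit case by reversing time---is right, but only \emph{after} the problem has been transported to the two-dimensional invariant manifold $\mathfrak{A}$, and that transporting step is exactly what is missing from your argument. The paper's proof rests on the observation that a complete trajectory bounded in the past is automatically amenable (since $\nu>0$, boundedness of $|v(s)|_{\mathbb{H}}$ for $s\le 0$ makes $\int_{-\infty}^{0}e^{2\nu s}|v(s)|^{2}_{\mathbb{H}}\,ds$ finite), hence it lies entirely in $\mathfrak{A}$, and so does its $\alpha$-limit set. By \textbf{(A1)} and \textbf{(A2)}, $\mathfrak{A}$ is homeomorphic to the plane $\mathbb{E}^{-}$ and carries a genuine flow extending $\varphi$; for a flow on the plane time reversal is harmless, so the trichotomy for $\alpha$-limit sets follows from H\'{a}jek's planar Poincar\'{e}-Bendixson theorem, or alternatively from the key lemmas of Section \ref{SEC: Trichotomy} (Lemmas \ref{LEM: BendixsonLemma}--\ref{LEM: PeriodicPointInLimitSet}), whose proofs actually \emph{simplify} here because the attraction property \textbf{(A3)} is not needed for a trajectory already lying on $\mathfrak{A}$. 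You never invoke amenability or $\mathfrak{A}$, so your ``compact invariant set carrying the cone structure'' is never identified with a subset of a planar flow, which is what every transversal and Jordan-curve argument in the proof of Theorem \ref{TH: PBTrichotomy} requires.

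The step of your proposal that concretely fails is the claimed symmetry $\mathbb{E}^{\pm}\mapsto\mathbb{E}^{\mp}$. Reversing time in \eqref{EQ: SqueezingProperty} replaces $V$ by $-V$ and $\nu$ by $-\nu$: the cone associated with the reversed structure is $\{V\ge 0\}$, which contains the subspace $\mathbb{E}^{+}$. Hypothesis \textbf{(H2)} bounds only $\dim\mathbb{E}^{-}$, and $\mathbb{E}^{+}$ is in general infinite-dimensional, so the reversed system is monotone with respect to a cone of rank $\dim\mathbb{E}^{+}$, not rank $2$; moreover the reversed exponent $-\nu$ is negative, violating \textbf{(H3)} as stated. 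This is not a repairable technicality: the hypotheses \textbf{(H1)}--\textbf{(H3)}, \textbf{(COM)}, \textbf{(S)} encode squeezing onto a two-dimensional manifold \emph{forward} in time, while the backward dynamics is expansive, so no reversed semiflow---whether defined globally on $\mathbb{E}$ or invoked at an ``abstract compact-flow level,'' which is in any case illegitimate since Theorem \ref{TH: PBTrichotomy} assumes global hypotheses on maps of $\mathbb{E}$---can satisfy the same hypotheses. Time reversal becomes legitimate only after restriction to $\mathfrak{A}$, where the flow property \textbf{(A2)} already holds and nothing needs to be reversed at the level of $P$, $\mathbb{E}^{\pm}$, or $\nu$.
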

\begin{corollary}
	\label{COR: Th1CorIsolatedOrbit}
	Under the hypotheses of Theorem \ref{TH: PBTrichotomy} any isolated orbitally stable periodic orbit is asymptotically orbitally stable. 
\end{corollary}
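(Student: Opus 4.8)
The plan is to combine orbital stability, which confines nearby orbits to a thin tubular neighbourhood of the cycle, with the trichotomy of Theorem~\ref{TH: PBTrichotomy}, which severely restricts the possible $\omega$-limit sets inside such a tube. Write $\Gamma$ for the periodic orbit, which I take to be nonconstant with minimal period $T>0$, and let $N_{\varepsilon}:=\{v\in\mathbb{E}\ :\ \operatorname{dist}_{\mathbb{E}}(v,\Gamma)<\varepsilon\}$, where $\operatorname{dist}_{\mathbb{E}}(v,\Gamma)=\inf_{w\in\Gamma}\|v-w\|_{\mathbb{E}}$. Orbital stability means that for every $\varepsilon>0$ there is $\delta>0$ with $\varphi^{t}(v_{0})\in N_{\varepsilon}$ for all $t\ge0$ whenever $v_{0}\in N_{\delta}$, while asymptotic orbital stability additionally requires $\operatorname{dist}_{\mathbb{E}}(\varphi^{t}(v_{0}),\Gamma)\to0$ for $v_{0}$ in some fixed neighbourhood. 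Since orbital stability is assumed, it suffices to produce a neighbourhood on which this attraction holds. I would fix $\varepsilon_{*}>0$ (to be constrained below) and choose $\delta_{*}>0$ from orbital stability so that every $v_{0}\in N_{\delta_{*}}$ has $\gamma^{+}(v_{0})\subset N_{\varepsilon_{*}}$; in particular $\gamma^{+}(v_{0})$ is bounded, so by \textbf{(COM)} the set $\omega(v_{0})$ is nonempty, compact, connected and invariant, is contained in $\overline{N_{\varepsilon_{*}}}$, and falls under one of \textbf{(T1)}, \textbf{(T2)}, \textbf{(T3)}.

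The key step is to rule out \textbf{(T1)} and \textbf{(T3)}, both of which force $\omega(v_{0})$ to contain a stationary point lying in $\overline{N_{\varepsilon_{*}}}$. I claim that stationary points cannot accumulate on $\Gamma$, so that $\varepsilon_{*}$ may be chosen with $\overline{N_{\varepsilon_{*}}}$ free of stationary points. Indeed, if there were stationary points $p_{n}$ with $\operatorname{dist}_{\mathbb{E}}(p_{n},\Gamma)\to0$, then $\{p_{n}\}$ would be bounded and, since $p_{n}=\varphi^{\tau_{com}}(p_{n})$, precompact by \textbf{(COM)}; a limit point $q$ would satisfy $q\in\Gamma$ (as $\Gamma$ is closed) and, being a limit of stationary points, would itself be stationary (the stationary set $\{v:\varphi^{t}(v)=v\ \forall t\ge0\}$ is closed). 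But a nonconstant periodic orbit contains no stationary point, a contradiction. Hence for $\varepsilon_{*}$ small enough $\overline{N_{\varepsilon_{*}}}$ contains no stationary point, and cases \textbf{(T1)}, \textbf{(T3)} are impossible for every $v_{0}\in N_{\delta_{*}}$.

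It remains to treat \textbf{(T2)}. Using that $\Gamma$ is isolated, I would shrink $\varepsilon_{*}$ once more so that $\overline{N_{\varepsilon_{*}}}$ lies inside an isolating neighbourhood and thus contains no periodic orbit other than $\Gamma$. Then for $v_{0}\in N_{\delta_{*}}$ the set $\omega(v_{0})$, being a periodic orbit inside $\overline{N_{\varepsilon_{*}}}$, must coincide with $\Gamma$; consequently $\operatorname{dist}_{\mathbb{E}}(\varphi^{t}(v_{0}),\Gamma)\to0$ as $t\to+\infty$. This attraction on the fixed neighbourhood $N_{\delta_{*}}$, together with the assumed orbital stability, is exactly asymptotic orbital stability.

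The only genuinely delicate point is the exclusion of \textbf{(T1)} and \textbf{(T3)}, i.e. the non-accumulation of stationary points on $\Gamma$; everything else is bookkeeping with the definitions and the standard compactness properties of $\omega$-limit sets. I therefore expect the accumulation argument to be the main obstacle, and it is resolved above by invoking \textbf{(COM)} to gain precompactness of the putative stationary points, together with the closedness of the stationary set and the nonconstancy of $\Gamma$. One should also check that the \textbf{(T3)} alternative always exhibits at least one stationary point in $\omega(v_{0})$ — this is built into the statement, since the set $\mathcal{N}$ is nonempty (the accompanying complete orbits have their $\alpha$- and $\omega$-limit sets in $\mathcal{N}$, and these limit sets are nonempty) — so that a single choice of $N_{\varepsilon_{*}}$ disposes of both \textbf{(T1)} and \textbf{(T3)} at once.
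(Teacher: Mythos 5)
Your proof is correct and takes essentially the same route as the paper: orbital stability confines forward orbits to a tube around the periodic orbit, the tube is shrunk so that its closure contains no stationary points and no other periodic orbits, and then Theorem \ref{TH: PBTrichotomy} (with \textbf{(COM)} giving compactness of the bounded semi-orbits) forces every such $\omega$-limit set to coincide with the orbit itself. The only difference is that you explicitly justify, via the compactness of $\varphi^{\tau_{com}}$, that stationary points cannot accumulate on the orbit — a fact the paper's proof simply asserts when choosing $\varepsilon$.
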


Remind that a stationary point $v_{0} \in \mathbb{H}$ is called \textit{Lyapunov stable} if for every $\varepsilon>0$ there exists $\delta>0$ such that $\|\varphi^{t}(v)-v_{0}\|_{\mathbb{E}} < \varepsilon$ for all $t \geq 0$ provided that $\|v-v_{0}\|_{\mathbb{E}} < \delta$. 

Let $\mathcal{D} \subset \mathbb{E}$ be some set containing a given stationary point $v_{0}$. We say that $\mathcal{D}$ is a $k$-\textit{dimensional local unstable set} for $v_{0}$ if 
\begin{enumerate}
	\item[\textbf{(U1)}] $\mathcal{D}$ is a homeomorphic image of some open $k$-dimensional cube;
	\item[\textbf{(U2)}] For every point $w_{0} \in \mathcal{D}$ there is a unique complete trajectory $w(\cdot)$ passing through $w(0)=w_{0}$ and $w(t) \to v_{0}$ as $t \to -\infty$;
	\item[\textbf{(U3)}] For every $\varepsilon>0$ there is $\delta>0$ such that if $\|w_{0} - v_{0}\|_{\mathbb{E}}<\delta$ and $w_{0} \in \mathcal{D}$ then $\|\varphi^{t}(w)-v_{0}\|_{\mathbb{E}}<\varepsilon$ for all $t \leq 0$.
\end{enumerate}

We call a stationary point $v_{0} \in \mathbb{E}$ \textit{terminal} if either it is Lyapunov stable or there is a $2$-dimensional unstable set for $v_{0}$. In the latter case we call $v_{0}$ an \textit{unstable terminal point}. The role of terminal points is in the following.
\begin{corollary}
	\label{COR: TerminalPoints}
	Under the hypotheses of Theorem \ref{TH: PBTrichotomy} suppose that $\omega(w_{0})$ contains a terminal point $v_{0}$. Then $\omega(w_{0}) = \{ v_{0} \}$.
\end{corollary}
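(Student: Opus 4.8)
The plan is to treat the two defining alternatives of a terminal point separately, working throughout on the two-dimensional invariant manifold $\mathcal{A}$ produced in the proof of Theorem~\ref{TH: PBTrichotomy}, on which the reduced semiflow is a flow behaving like a planar one and which contains every compact $\omega$-limit set under consideration. Since $v_{0} \in \omega(w_{0})$ is a stationary point, alternative \textbf{(T2)} is immediately excluded (a nonconstant periodic orbit contains no equilibria) and alternative \textbf{(T1)} forces $\omega(w_{0}) = \{v_{0}\}$ at once; thus the entire burden is to rule out a nondegenerate configuration of type \textbf{(T3)}.

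First I would dispose of the Lyapunov stable case by a direct $\varepsilon$-$\delta$ argument that needs neither the trichotomy nor the manifold. Fix $\varepsilon > 0$ and choose $\delta>0$ from Lyapunov stability. Because $v_{0} \in \omega(w_{0})$, there is a time $t_{N}$ with $\|\varphi^{t_{N}}(w_{0}) - v_{0}\|_{\mathbb{E}} < \delta$; the semiflow property together with stability then gives $\|\varphi^{t}(w_{0}) - v_{0}\|_{\mathbb{E}} < \varepsilon$ for all $t \geq t_{N}$, whence $\omega(w_{0}) \subset \{\, v : \|v - v_{0}\|_{\mathbb{E}} \leq \varepsilon \,\}$. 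As $\varepsilon>0$ is arbitrary, $\omega(w_{0}) = \{v_{0}\}$.

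For an unstable terminal point I would exploit that the local unstable set $\mathcal{D}$ is two-dimensional and lies in the two-dimensional manifold $\mathcal{A}$. By invariance of domain, the homeomorphic image of an open $2$-cube inside $\mathcal{A}$ is open in $\mathcal{A}$, so $\mathcal{D}$ is a neighborhood of $v_{0}$ in $\mathcal{A}$; combined with \textbf{(U2)}--\textbf{(U3)} this says precisely that $v_{0}$ is a repeller for the flow on $\mathcal{A}$. Hence there is a small isolating disk $N \ni v_{0}$ in $\mathcal{A}$ whose boundary is an exit set, i.e. orbits cross $\partial N$ only outward and the maximal invariant subset of $\overline{N}$ equals $\{v_{0}\}$. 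Now $v_{0} \in \omega(w_{0})$ yields times $t_{n} \to \infty$ with $\varphi^{t_{n}}(w_{0}) \in N$; since the trajectory cannot re-enter $N$ through the exit set once it has left, it must in fact remain in $N$ for all large $t$. Invariance of $\omega(w_{0})$ then forces $\omega(w_{0}) \subset \overline{N}$, hence $\omega(w_{0})$ is contained in the maximal invariant subset of $\overline{N}$, which is $\{v_{0}\}$.

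The main obstacle is the unstable case, and specifically the passage from the purely topological data \textbf{(U1)}--\textbf{(U3)} to a genuine repelling isolating block for a flow on $\mathcal{A}$ that is a priori only continuous: one must construct $N$ so that $\partial N$ is crossed strictly outward and verify the no-re-entry property without appealing to a smooth vector field, using instead the repeller characterization and the fact that the reduced dynamics on $\mathcal{A}$ is a genuine (invertible) flow. I also expect to need from the proof of Theorem~\ref{TH: PBTrichotomy} the two facts used above, namely that $\omega(w_{0})$ lies on $\mathcal{A}$ and that the induced dynamics on $\mathcal{A}$ is planar, so that invariance of domain and the isolating-block construction are legitimate.
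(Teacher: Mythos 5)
Your treatment of the Lyapunov stable case is correct and matches the paper (which dismisses it as obvious), and your reduction to ruling out a nontrivial set of type \textbf{(T3)} is sound. The gap is in the unstable case, at the step ``$v_{0} \in \omega(w_{0})$ yields times $t_{n}\to\infty$ with $\varphi^{t_{n}}(w_{0}) \in N$.'' The isolating disk $N$ you construct is a subset of the two-dimensional manifold $\mathfrak{A}$ (your $\mathcal{A}$), which has empty interior in $\mathbb{E}$; the trajectory of $w_{0}$ is an orbit of the semiflow in $\mathbb{E}$ and in general never lies on $\mathfrak{A}$ --- property \textbf{(A3)} only gives $\|\varphi^{t}(w_{0})-\Phi(\Pi\varphi^{t}(w_{0}))\|_{\mathbb{E}}\to 0$. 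So $v_{0}\in\omega(w_{0})$ yields $\varphi^{t_{n}}(w_{0})\to v_{0}$ in $\mathbb{E}$, not membership in $N$, and the whole exit-set/no-re-entry mechanism, which is a statement about orbits of the flow $\xi$ on $\mathfrak{A}$, cannot be applied to $\varphi^{t}(w_{0})$; it cannot even be applied verbatim to the shadow curve $t\mapsto\Phi(\Pi\varphi^{t}(w_{0}))$, since that curve is not an orbit of $\xi$. Incidentally, the obstacle you flag at the end (building an isolating block for a merely continuous flow) is not the hard part: from \textbf{(U2)}, \textbf{(U3)} and compactness one gets uniform backward attraction into the half-radius ball $\mathcal{B}$ on a closed disk $\mathcal{C}\subset\mathcal{D}$, hence the exit-time property that any $\xi$-orbit starting on $\partial\mathcal{B}$ must reach $\partial\mathcal{C}$ within a fixed time $T$ --- no transversality or strict outward crossing is needed.

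What is genuinely needed, and what the paper's proof supplies, is the transfer of this manifold exit-time property to the off-manifold trajectory. The paper does this by: (i) fixing a compact set $\mathcal{K}$ containing the semi-orbit of $w_{0}$ and its shadow, and $\delta>0$ so that $\delta$-close points of $\mathcal{K}$ stay $d$-close under $\varphi^{t}$ for $t\in[0,T]$, with $d$ small relative to the annulus $\mathcal{C}\setminus\mathcal{B}$; (ii) using \textbf{(A3)} to get $t_{\delta}$ after which the trajectory is $\delta$-shadowed by $\Phi(\Pi\varphi^{t}(w_{0}))$; (iii) concluding that whenever the shadow meets $\partial\mathcal{B}$ after $t_{\delta}$, the true trajectory is forced at least $r/2$ away from $v_{0}$ within time $T$; and (iv) deriving a contradiction with the fact that a trajectory accumulating at the stationary point $v_{0}$ must linger in any small neighborhood of $v_{0}$ for time intervals longer than $T$ at arbitrarily late times. (The paper also structures the case differently: the annulus argument is used only to show that $v_{0}$ is the sole stationary point of $\omega(w_{0})$, after which the remaining \textbf{(T3)} possibility --- a homoclinic orbit --- is excluded directly by \textbf{(U3)}; your repeller argument, if repaired, would handle both at once.) Without steps (i)--(iv), or some substitute such as proving that $\omega(w_{0})$ is internally chain transitive for $\xi$ and that a chain transitive set containing a repeller reduces to it, your argument does not close.
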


We call a bounded closed subset $\mathcal{A} \subset \mathbb{E}$ an \textit{attractor} if there exists an open set $\mathcal{U} \subset \mathbb{E}$ such that $\mathcal{A} \subset \mathcal{U}$ and for any $v_{0} \in \mathcal{U}$ the positive semi-orbit $\gamma^{+}(v_{0})$ is compact in $\mathbb{E}$ and $\omega(v_{0}) \subset \mathcal{A}$. We call any such set $\mathcal{U}$ a \textit{neighborhood} of the attractor $\mathcal{A}$.
\begin{theorem}
	\label{TH: PeriodicOrbitExis}
	Let the semiflow $\varphi$ satisfy \textbf{(H1)}, \textbf{(H2)} with $j=2$, \textbf{(H3)}, \textbf{(COM)} and \textbf{(S)}. Suppose there is an attractor $\mathcal{A}$ that either contains no stationary points or the only stationary points in $\mathcal{A}$ are unstable terminal points. Then the set $\mathcal{A}$ contains at least one orbitally stable periodic orbit.
\end{theorem}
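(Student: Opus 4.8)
The plan is to first show that, under either alternative on the stationary points, every nonstationary orbit in $\mathcal{A}$ has a \emph{periodic} $\omega$-limit, and then to single out an orbitally stable one by analysing the planar return dynamics on the $2$-dimensional invariant manifold underlying the construction.

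First I would fix a point $w_0 \in \mathcal{A}$ that is not a stationary point. Since $\mathcal{A} \subset \mathcal{U}$ and $\mathcal{U}$ is a neighborhood of the attractor, the semi-orbit $\gamma^{+}(w_0)$ is compact and $\omega(w_0) \subset \mathcal{A}$, so Theorem \ref{TH: PBTrichotomy} applies to $\omega(w_0)$. I claim $\omega(w_0)$ contains no stationary point. Indeed, if $\mathcal{A}$ has none this is immediate; and if the only stationary points of $\mathcal{A}$ are unstable terminal, then any stationary $v_0 \in \omega(w_0)$ is terminal, so Corollary \ref{COR: TerminalPoints} forces $\omega(w_0) = \{v_0\}$, i.e. the forward orbit of $w_0$ converges to $v_0$. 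But an unstable terminal point carries a $2$-dimensional local unstable set, so in the reduced $2$-dimensional dynamics it repels all nearby orbits and its stable set is trivial; hence no nonconstant orbit of $\mathcal{A}$ can converge to it, a contradiction. Thus $\omega(w_0)$ is a compact invariant set without stationary points, so alternatives \textbf{(T1)} and \textbf{(T3)} of Theorem \ref{TH: PBTrichotomy} are excluded and $\omega(w_0)$ is a periodic orbit. This simultaneously produces a periodic orbit in $\mathcal{A}$ and shows that \emph{every} nonstationary orbit of $\mathcal{A}$ limits onto one.

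Next I would exploit the planar character of the semiflow on the $2$-dimensional invariant manifold $\mathcal{M}$ provided by \textbf{(H1)}--\textbf{(H3)} and \textbf{(COM)} with $j=2$, on which the relevant $\omega$-limit sets lie. There the semiflow behaves as a local planar flow, so each periodic orbit $\Gamma \subset \mathcal{A}$ bounds a disk $D_{\Gamma}$, and any two periodic orbits are nested or disjoint. Fixing $\Gamma_0 = \omega(w_0)$ and a transversal arc $S$ through a point of $\Gamma_0$, the first-return map $\pi$ is defined and continuous on a subarc of $S$; crucially, the cone monotonicity contained in \textbf{(H3)} (see the Remark after the hypotheses) makes $\pi$ a \emph{monotone} interval map whose fixed points are exactly the periodic orbits meeting $S$. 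By the previous step the forward $\pi$-iterates of every point converge monotonically to such a fixed point, and at the outer end of $S$ the motion is inward because $\Gamma_0$ attracts $w_0$ from that side. A standard argument for monotone interval maps then yields a fixed point $x^{*}$ attracting from both sides, that is, a periodic orbit $\Gamma^{*} \subset \mathcal{A}$ approached by orbits from both of its sides.

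Finally I would promote this two-sided attraction to genuine orbital stability in the $\mathbb{E}$-norm: two-sided convergence of the return dynamics, combined with the squeezing inequality \eqref{EQ: SqueezingProperty} and the smoothing estimate \textbf{(S)}, forces nearby full orbits to stay uniformly close to $\Gamma^{*}$ and contract toward it in $\mathbb{E}$, so $\Gamma^{*}$ is orbitally stable (and asymptotically so if isolated, by Corollary \ref{COR: Th1CorIsolatedOrbit}). I expect the main obstacle to be precisely this middle step of upgrading \emph{existence} of periodic orbits to a \emph{two-sided stable} one: one must exclude that every periodic orbit is merely semistable, which is where the monotonicity of $\pi$ and the compactness \textbf{(COM)} are essential, the latter guaranteeing that Hausdorff limits of nested periodic orbits are compact invariant sets which, by the first step, cannot degenerate into homoclinic loops and must again be periodic. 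Controlling the region enclosed by the innermost such orbit --- which may contain stationary points lying outside $\mathcal{A}$ --- through an index/Poincar\'e--Bendixson argument together with the unstable-terminal hypothesis is the remaining delicate point.
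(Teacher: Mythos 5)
Your proposal has the right skeleton (periodic $\omega$-limits, then extraction of a two-sided stable orbit, then upgrading to stability in $\mathbb{E}$), but two of its three steps contain genuine gaps, and the middle one — which you yourself flag as ``the remaining delicate point'' — is exactly where the substance of the theorem lies. First, your claim that no nonconstant orbit of $\mathcal{A}$ can converge to an unstable terminal point is unjustified: the repelling behaviour you invoke holds only \emph{inside} the two-dimensional manifold $\mathfrak{A}$, since \textbf{(U2)}--\textbf{(U3)} control only points of $\mathcal{D}\subset\mathfrak{A}$. Transverse to $\mathfrak{A}$ the squeezing condition \textbf{(H3)} produces contraction, so an unstable terminal point in general has a nontrivial stable set off the manifold; and since the paper's notion of attractor requires neither invariance nor minimality, $\mathcal{A}$ may well contain points of such transverse stable fibers. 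The paper sidesteps this by arguing only for nonstationary points of $\mathcal{U}_{\mathcal{A}}\cap\mathfrak{A}$ (proof of Lemma \ref{LEM: PeriodicSetClosed}); your step is repairable the same way, and that restricted statement suffices to produce at least one periodic orbit in $\mathcal{A}$.

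The serious gap is the assertion that ``a standard argument for monotone interval maps then yields a fixed point attracting from both sides.'' No such argument exists: a monotone interval map all of whose orbits converge to fixed points can have only semi-stable fixed points (fixed points at $1/n$, $n\geq 1$, with $\pi(x)>x$ in between; for the flow, an infinite nested family of periodic orbits, each attracting from inside and repelling from outside). Excluding this configuration cannot be done locally on one transversal; it requires a global argument over the whole nested family of periodic orbits in $\mathcal{A}$, and this is precisely what the paper's proof consists of: closedness of $\operatorname{Per}(\mathcal{A})$ (Lemma \ref{LEM: PeriodicSetClosed}), existence of upper and lower bounds for chains (Lemma \ref{LEM: PeriodicChain}) — both relying on the unstable-terminal hypothesis through Theorem \ref{TH: PBTrichotomy} — followed by two applications of Zorn's lemma: a maximal periodic orbit yields an externally stable orbit, and a minimal externally stable orbit is shown to be internally stable. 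Finally, your last step is also not routine: two-sided attraction on $\mathfrak{A}$ gives only what the paper calls amenable stability, and passing from this to orbital stability in $\mathbb{E}$ is the content of Lemma \ref{LEM: AmenStOrbSt}, whose proof is a nontrivial contradiction argument using limit trajectories obtained via \textbf{(COM)}, Corollary \ref{COR: Th1CorAlphaLimit} and the separation Lemma \ref{LEM: PeriodicSeparateLemma}; it does not follow simply from \eqref{EQ: SqueezingProperty} and \textbf{(S)}, in particular because the attraction to $\mathfrak{A}$ in \textbf{(A3)} is not uniform. So the proposal correctly identifies the architecture of the proof but leaves its two essential components — the global Zorn-type selection of a two-sided stable orbit and the amenable-to-orbital stability lemma — unproven.
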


Our approach to Theorems \ref{TH: PBTrichotomy} and \ref{TH: PeriodicOrbitExis} uses modifications and additions to some arguments from the papers of R.~A.~Smith \cite{Smith1994,Smith1994PB,Smith1992,Smith1987OrbStab} and the classical Poincar\'{e}-Bendixson theory. In particular, to stay in the context of abstract semiflows we use the topological results of O.~H\'{a}j{e}k on the existence of transversals for flows on two-dimensional topological manifolds \cite{Hajek1968}. Under the hypotheses of Theorem \ref{TH: PBTrichotomy} there exists a two-dimensional invariant topological manifold $\mathfrak{A}$, which attracts all compact trajectories (for $\mathbb{E}=\mathbb{H}$ this is shown in our previous work \cite{Anikushin2020Red}). In fact, if we strengthen the inequality in \textbf{(S)} to be uniform in $\tau_{S}$ from some segment, then the manifold $\mathfrak{A}$ possesses the exponential tracking property (and the exponent is determined by $\nu$ from \textbf{(H3)}), i.~e. every semi-trajectory (not necessarily bounded!) is attracted at an exponential rate by some trajectory from $\mathfrak{A}$ (the proof will be given in \cite{Anikushin2020Geom}). In this case Theorem \ref{TH: PBTrichotomy} is a direct corollary of the results of O.~H\'{a}j{e}k \cite{Hajek1968} and the exponential tracking. Here we give a proof, which is independent of this property. For particular classes of equations the existence of transversals can be obtained by the Lipschitz flow-box theorem applied to equations, which describe the dynamics of $\varphi$ on $\mathfrak{A}$ (the so-called \textit{inertial form}). But this will require additional constructions and hypotheses. It seems worth to stay in the abstract context to avoid unnecessary verifications in applications. Another reason to make some arguments independent on the inertial manifolds properties could be possible generalizations of the present theory for non-compact semiflows or semiflows satisfying weaker analogs of \textbf{(H3)}. Results in this direction are known for ODEs \cite{Burkin2014Hidden,LeoBurShep1996}, but require further developments for infinite-dimensional systems.

The presented theory gives a unified approach for the series of papers by R.~A.~Smith \cite{Smith1994,Smith1994PB,Smith1992,Smith1987OrbStab}, connects them with various theories of inertial manifolds and opens new perspectives of applications. This became possible due to recent developments of frequency-domain methods. An important step was done by A.~V.~Proskurnikov \cite{Proskurnikov2015} who relaxed the controllability assumption used in the frequency theorem of Likhtarnikov-Yakubovich \cite{Likhtarnikov1977}. Further, two new versions of the frequency theorem for delay \cite{Anikushin2020FreqDelay} and parabolic \cite{Anikushin2020FreqParab} equations obtained by the present author show that under the frequency inequality used by R.~A.~Smith in \cite{Smith1992,Smith1994,Smith1994PB} there exists an operator $P$ with properties \textbf{(H1)}, \textbf{(H2)}, \textbf{(H3)}. We should note that Smith abandonned his approach based on quadratic functionals \cite{Smith1987OrbStab} since he was unable to provide natural conditions for the existence of such functionals for infinite-dimensional problems \cite{Smith1992}. 

In \cite{Anikushin2020FreqParab} it is shown that the famous Spectral Gap Condition \cite{Temam1997} is the simplest form of the frequency inequality used by R.~A.~Smith. Moreover, in the recent survey of A. Kostianko et al. \cite{KostiankoZelikSA2020} it is shown that the Spatial Avering Principle suggested by J.~Mallet-Paret and G.~R.~Sell \cite{MalletParetSell1988IM} can also be considered within \textbf{(H3)}. Thus, these classical studies are included into a more general geometric theory of inertial manifolds based on quadratic Lyapunov functionals. We shall present this general theory in the forthcoming paper \cite{Anikushin2020Geom}. For applications of the present theory, i.~e. for construction of $2$-dimensional inertial manifolds, the most convenient conditions are given by the frequency theorem. Various frequency-domain conditions give a lot of flexibility in applications and allow to obtain nontrivial results as the papers of R.~A.~Smith show. However, this approach should not be considered as a panacea. Success can be achieved for systems in which the nonlinearity is sufficiently simple so that it can be adequately described by its Lipschitz constant over some invariant region or when the linear part ``dominates'' the nonlinearity. This seems to be the main intuition before one starts calculations in practice.

Nontrivial spaces $\mathbb{E}$ and $\mathbb{H}$ appear in the case of delay equations. Verification of the smoothing estimate in \textbf{(S)} and applications of the frequency theorem \cite{Anikushin2020FreqDelay} are linked with the construction of semigroups for such equations in an appropriate Hilbert space. In \cite{Anikushin2020Semigroups} the present author gave a simple approach for this problem. Previously known constructions of such semigroups, which are based on the theory of accretive operators (see, for example, the work of G.~F.~Webb \cite{Webb1976}), have some limitations in applications (a discussion of this is given in \cite{Anikushin2020Semigroups}).

A historical background on developments of the Poincar\'{e}-Bendixson theorem from the classical smooth version up to semi-flows on the plane is given by K.~Ciesielski \cite{Ciesielski2012}. A review of several works extending the Poincar\'{e}-Bendixson theory for certain high-dimensional ODEs is contained in the paper of B.~Li  \cite{Li1981}. Another review for ODEs is done by I.~M.~Burkin \cite{Burkin2015}, who especially treats the works of R.~A.~Smith and their connection with frequency-domain methods (see also the monograph of G.~A.~Leonov et al. \cite{LeoBurShep1996}). 

In \cite{Burkin2015} I.~M.~Burkin and N.~N.~Khien suggested an approach based on developments of R.~A.~Smith's theory, which allows to localize hidden attractors for ODEs (see also the pioneering paper of G.~A.~Leonov and N.~V.~Kuznetsov \cite{LeoKuz2013Hidden}, where another approach is suggested). We hope that our study will lead to discoveries of hidden attractors in infinite-dimensional problems.

It is worth mentioning the approach coming from the shape theory. For example, in the papers of S.~A.~Bogatyi and V.~I.~Gutsu \cite{BogatiyGutsu1989}, B.~G\"{u}nter and J.~Segal \cite{GunterSegal1993} it is described the shape of attracting compacta (and then one can apply some topological classification for sets on the plane, which have the described shape, to obtain a weaker analog of the Poincar\'{e}-Bendixson trichotomy). However, this approach highly relies on the attracting property (=asymptotic stability) that fails to hold in certain situations covered by the classical Poincar\'{e}-Bendixson theorem and not easy to check in practice.

For more specific classes of systems the conclusion of Theorem \ref{TH: PBTrichotomy} can be achieved with the use of some integer-valued Lyapunov function, which usually counts the number of zeros. In particular, in the work of J.~Mallet-Paret and G.~R.~Sell \cite{MalletParetSell1996} this approach is applied to certain systems with delay. This theory as well as the mentioned works on monotone semiflows \cite{Sanches2009,FengWangWu2017} do not provide any stability results as in Theorem \ref{TH: PeriodicOrbitExis}. Although these theories cover many interesting examples, where Theorem \ref{TH: PBTrichotomy} cannot be applied, sometimes it is possible to strengthen these results with the aid of the present theory. A nice example was given by R.~A.~Smith in \cite{Smith1992}, where he obtained conditions for the existence of an orbitally stable periodic orbit for the Goodwin system of delay equations, which is a monotone cyclic feedback system in the terminology of \cite{MalletParetSell1996}. We will return to this example in Section \ref{SEC: DelayEqs}.

Frequency-domain methods are more developed for studying autonomous and non~-~autonomous ODEs, where similar to \textbf{(H3)} assumptions can be used to study the existence of periodic \cite{LeoBurShep1996} and almost periodic solutions \cite{AnikushinRR2019} or dimension-like properties \cite{Anikushin2019Vestnik,AnikushinRR2019,KuzLeoReit2019}. Some infinite-dimensional developments of theses ideas are given by the present author \cite{Anikushin2020Geom,Anikushin2020Semigroups,Anikushin2020Red} and also by N.~Yu.~Kalinin and V.~Reitmann \cite{KalininReitmann2012}.

This paper is organized as follows. In Section \ref{SEC: Preliminaries} we expose several auxiliary facts concerned with the existence of inertials manifolds and the existence of transversals for flows on two-dimensional manifolds. In Section \ref{SEC: Trichotomy} we prove Theorem \ref{TH: PBTrichotomy} and the corresponding corollaries. Section \ref{SEC: OrbitalStability} is devoted to the proof of Theorem \ref{TH: PeriodicOrbitExis}. In Section \ref{SEC: DelayEqs} we consider applications of Theorems \ref{TH: PBTrichotomy} and \ref{TH: PeriodicOrbitExis} to delay equations in $\mathbb{R}^{n}$. In Section \ref{SEC: ReactionDiffusionSmith} we consider applications to semilinear parabolic equations. In Section \ref{SEC: OtherApps} we discuss applications to parabolic equations with delay and boundary controls and further developments of adjacent results.
\section{Preliminaries}
\label{SEC: Preliminaries}
\subsection{Existence of the inertial manifold $\mathfrak{A}$}

Under \textbf{(H3)} a complete trajectory $v(\cdot)$ of the semi-flow $\varphi$ is called \textit{amenable} if
\begin{equation}
\int\limits_{-\infty}^{0} e^{2\nu s}|v(s)|^{2}_{\mathbb{H}} ds < +\infty.
\end{equation}

Let us define $\mathfrak{A}$ as the set of all $v_{0} \in \mathbb{H}$ such that there exists an amenable trajectory passing through $v_{0}$. We call the set $\mathfrak{A}$ the \textit{amenable set}. A modification of Lemma 1 in \cite{Anikushin2020Red} (proved for the case $\mathbb{E}=\mathbb{H}$) gives us the following property.
\begin{lemma}
	\label{LEM: AmenableLemma}
	Under \textbf{(H1)}, \textbf{(H3)} and \textbf{(S)} let $v_{1}(\cdot)$ and $v_{2}(\cdot)$ be two distinct amenable trajectories; then $V(v_{1}(t)-v_{2}(t)) < 0$ for all $t \in \mathbb{R}$.
\end{lemma}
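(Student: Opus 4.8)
The plan is to track the single scalar quantity $\Phi(t) := e^{2\nu t} V(w(t))$, where $w(t):=v_{1}(t)-v_{2}(t)$, and to prove $\Phi(t)<0$ for all $t$. First I would rewrite \textbf{(H3)} along the two complete trajectories. Since $v_{i}(a+s)=\varphi^{s}(v_{i}(a))$ for $s\ge 0$ and any $a\in\mathbb{R}$, applying \eqref{EQ: SqueezingProperty} with base points $v_{i}(a)$, with $l=0$, $r=b-a$, and multiplying through by $e^{2\nu a}$ yields
\begin{equation*}
\Phi(b)-\Phi(a)\le -\delta\int_{a}^{b} e^{2\nu t}\,|w(t)|^{2}_{\mathbb{H}}\,dt
\end{equation*}
for all real $a\le b$ with $b-a\ge\tau_{V}$. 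In particular $\Phi(b)\le\Phi(a)$ whenever $b\ge a+\tau_{V}$, so $\Phi$ is nonincreasing on the scale $\tau_{V}$.

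Next I would establish $\Phi(t)\le 0$. Amenability of $v_{1}$ and $v_{2}$ together with $|w|^{2}_{\mathbb{H}}\le 2|v_{1}|^{2}_{\mathbb{H}}+2|v_{2}|^{2}_{\mathbb{H}}$ gives $\int_{-\infty}^{0}g(s)\,ds<\infty$ for $g(s):=e^{2\nu s}|w(s)|^{2}_{\mathbb{H}}$, whence $\liminf_{s\to-\infty}g(s)=0$; I pick $s_{n}\to-\infty$ with $g(s_{n})\to 0$. The continuity bound $|V(v)|\le\|P\|\,\|v\|^{2}_{\mathbb{E}}$ and the smoothing estimate \textbf{(S)} applied to $w(a)=\varphi^{\tau_{S}}(v_{1}(a-\tau_{S}))-\varphi^{\tau_{S}}(v_{2}(a-\tau_{S}))$ then give
\begin{equation*}
|\Phi(a)|\le \|P\|\,e^{2\nu a}\|w(a)\|^{2}_{\mathbb{E}}\le \|P\|\,C_{S}^{2}\,e^{2\nu\tau_{S}}\,g(a-\tau_{S}).
\end{equation*}
Hence $\Phi(s_{n}+\tau_{S})\to 0$, and since $s_{n}+\tau_{S}\to-\infty$, for each fixed $t$ the scale-$\tau_{V}$ monotonicity yields $\Phi(t)\le\Phi(s_{n}+\tau_{S})$ for all large $n$; letting $n\to\infty$ gives $\Phi(t)\le 0$, i.e. $V(w(t))\le 0$ for every $t$.

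For the strict inequality I argue by contradiction: suppose $\Phi(t_{0})=0$ for some $t_{0}$. Monotonicity together with $\Phi\le 0$ forces $\Phi(a)=0$ for every $a\le t_{0}-\tau_{V}$; inserting such an $a$ and $b=t_{0}$ into the displayed integral inequality gives $\int_{a}^{t_{0}} e^{2\nu t}|w(t)|^{2}_{\mathbb{H}}\,dt=0$, so by continuity $w\equiv 0$ on $[a,t_{0}]$. Letting $a\to-\infty$ shows $w\equiv 0$ on $(-\infty,t_{0}]$; in particular $v_{1}(t_{0})=v_{2}(t_{0})$, and the semiflow property $v_{i}(t)=\varphi^{t-t_{0}}(v_{i}(t_{0}))$ propagates the equality forward, so $v_{1}\equiv v_{2}$, contradicting distinctness. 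Therefore $V(w(t))<0$ for all $t\in\mathbb{R}$.

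I expect the main obstacle to be the second step: bridging the gap between the $\mathbb{H}$-integrability supplied by amenability and the $\mathbb{E}$-dependent quadratic form $V$, which is precisely the role of \textbf{(S)}, combined with the fact that integrability yields only a $\liminf$ (rather than a genuine limit) that must then be upgraded to a global sign via the scale-$\tau_{V}$ monotonicity of $\Phi$.
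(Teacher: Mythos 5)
Your proof is correct and is essentially the argument the paper intends: the paper does not prove this lemma inline but defers to Lemma 1 of \cite{Anikushin2020Red} (the case $\mathbb{E}=\mathbb{H}$) ``modified'' via \textbf{(S)}, and your proof is exactly that modification — integrate the squeezing inequality \eqref{EQ: SqueezingProperty} along the two trajectories, use amenability to extract a sequence of times tending to $-\infty$ along which $e^{2\nu s}|v_{1}(s)-v_{2}(s)|^{2}_{\mathbb{H}}\to 0$, upgrade this to decay of $e^{2\nu t}V(\cdot)$ via \textbf{(S)} and the bound $|V(v)|\le\|P\|\|v\|^{2}_{\mathbb{E}}$, and finally rule out equality by noting that a vanishing integral forces the trajectories to coincide on a half-line and hence, by the semiflow property, everywhere. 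Your handling of the threshold $\tau_{V}$ (monotonicity only on scale $\tau_{V}$) and of the $\mathbb{H}$-versus-$\mathbb{E}$ distinction is exactly the point where the generalization from \cite{Anikushin2020Red} is needed, and you resolve both correctly.
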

In particular, from Lemma \ref{LEM: AmenableLemma} we have that the map $\varphi^{t} \colon \mathfrak{A} \to \mathfrak{A}$ is bijective.

Under \textbf{(H1)} and \textbf{(H2)} we define $\Pi \colon \mathbb{E} \to \mathbb{E}^{-}$ be defined as follows. For any $v \in \mathbb{E}$ the map $\mathbb{E}^{-} \ni w \mapsto \langle w,  Pv\rangle$ is a linear functional on the finite-dimensional space $\mathbb{E}^{-}$. Since $-V(\cdot)$ is positive-definite on $\mathbb{E}^{-}$, there exists a unique element $\Pi v \in \mathbb{E}^{-}$ such that $\langle w , Pv \rangle = \langle w, P \Pi v \rangle$ for all $w \in \mathbb{E}^{-}$. We say that $\Pi$ is the $V$-orthogonal projector onto $\mathbb{E}^{-}$. It can be easily verified that $\Pi$ is bounded, $\mathbb{E} = \operatorname{Ker}\Pi \oplus \mathbb{E}^{-}$, $P$ is positive on $\mathbb{E}^{+}$ and $V(v)=V(v^{+})+V(v^{-})$, where $v = v^{+} + v^{-}$ is the unique decomposition with $v^{+} \in \operatorname{Ker}\Pi$ and $v^{-} \in \mathbb{E}^{-}$. In other words, under \textbf{(H1)} and \textbf{(H2)} we can always assume that the subspaces $\mathbb{E}^{+}$ and $\mathbb{E}^{-}$ from \textbf{(H1)} are $V$-orthogonal in the given sense.

The above construction of $\Pi$ and smoothing estimate \textbf{(S)} leads to a generalization of Theorem 1 from \cite{Anikushin2020Red} as follows.
\begin{theorem}
	\label{TH: MainReductionTheorem}
	Let the semiflow $\varphi$ satisfy \textbf{(H1)}, \textbf{(H2)}, \textbf{(H3)}, \textbf{(COM)} and \textbf{(S)}; then either $\mathfrak{A} = \emptyset$ or $\Pi \colon \mathfrak{A} \to \mathbb{E}^{-}$ is a homeomorphism.
\end{theorem}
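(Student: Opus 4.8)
The plan is to establish Theorem \ref{TH: MainReductionTheorem} by showing that $\Pi$ restricted to $\mathfrak{A}$ is simultaneously injective, continuous, surjective, and has continuous inverse, assuming $\mathfrak{A} \neq \emptyset$. Injectivity comes essentially for free from Lemma \ref{LEM: AmenableLemma}: if $v_0, w_0 \in \mathfrak{A}$ are distinct, they lie on distinct amenable trajectories, so $V(v_0 - w_0) < 0$, which means $v_0 - w_0 \in \operatorname{Int}\mathcal{C}_V$, and in particular $v_0 - w_0 \notin \operatorname{Ker}\Pi$. Since $\mathbb{E} = \operatorname{Ker}\Pi \oplus \mathbb{E}^-$ and $\Pi$ is the $V$-orthogonal projector, $\Pi(v_0 - w_0) \neq 0$, giving $\Pi v_0 \neq \Pi w_0$. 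Continuity of $\Pi$ on $\mathfrak{A}$ is immediate since $\Pi$ is a bounded linear operator on all of $\mathbb{E}$.

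The substantive content is surjectivity and the continuity of $\Pi^{-1}$. For surjectivity I would fix an arbitrary target $\zeta \in \mathbb{E}^-$ and seek an amenable trajectory $v(\cdot)$ with $\Pi v(0) = \zeta$. The natural strategy, following the reduction-theorem arguments of \cite{Anikushin2020Red}, is to solve the boundary-value problem on $(-\infty, 0]$ by a fixed-point or limiting scheme: approximate by trajectories on finite intervals $[-T, 0]$ with prescribed $\mathbb{E}^-$-component $\zeta$ at time $0$, then pass to the limit $T \to +\infty$. The compactness hypothesis \textbf{(COM)} is what makes the limit extraction possible, yielding a complete trajectory, while the squeezing inequality \textbf{(H3)} controls the growth of the negative-time pieces and forces the limiting trajectory to satisfy the amenability integral bound $\int_{-\infty}^0 e^{2\nu s} |v(s)|_{\mathbb{H}}^2\,ds < +\infty$. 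The smoothing estimate \textbf{(S)} is needed to upgrade $\mathbb{H}$-convergence of the approximants to the $\mathbb{E}$-convergence required for the trajectory to live in the Banach space and for $\Pi$ to be applied.

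For the continuity of the inverse, the key estimate is a Lipschitz-type bound showing that two amenable trajectories whose values at time $0$ have close $\mathbb{E}^-$-projections are themselves close in $\mathbb{E}$. Here I would exploit Lemma \ref{LEM: AmenableLemma} together with \textbf{(H3)}: for two amenable trajectories $v_1(\cdot), v_2(\cdot)$ the quantity $e^{2\nu t} V(v_1(t) - v_2(t))$ is monotone, and since $V$ is negative on the difference one can compare $|\Pi(v_1(0) - v_2(0))|$ below and $\|v_1(0) - v_2(0)\|_{\mathbb{E}}$ above. Concretely, $-V(v_1(0)-v_2(0))$ dominates a multiple of $|v_1(0)-v_2(0)|_{\mathbb{H}}^2$ via the integral in \textbf{(H3)}, while the $V$-orthogonal splitting bounds $-V(v_1(0)-v_2(0))$ in terms of $-V$ of the $\mathbb{E}^-$-component, i.e. in terms of $|\Pi(v_1(0)-v_2(0))|$; then \textbf{(S)} converts the resulting $\mathbb{H}$-smallness into $\mathbb{E}$-smallness.

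The main obstacle I anticipate is the surjectivity step, specifically extracting a genuine complete trajectory in the limit $T \to +\infty$ and verifying that its amenability integral is finite uniformly, rather than merely finite on each truncation. One must carefully combine the a priori exponential bound coming from \textbf{(H3)} with the compactness from \textbf{(COM)} so that the limit trajectory does not escape to infinity in backward time and so that $\Pi v(0) = \zeta$ is preserved under the limit. Since the result is stated as a generalization of Theorem 1 in \cite{Anikushin2020Red} from the case $\mathbb{E} = \mathbb{H}$, the new difficulty over that reference is precisely the two-norm bookkeeping: every convergence argument that previously took place in the single Hilbert space $\mathbb{H}$ must now be routed through the smoothing estimate \textbf{(S)} to land in the finer space $\mathbb{E}$.
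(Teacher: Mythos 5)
The paper itself does not write out a proof of Theorem \ref{TH: MainReductionTheorem}: it states only that ``the proof follows the same arguments as in \cite{Anikushin2020Red}'' (Theorem 1 there, proved for $\mathbb{E}=\mathbb{H}$), with \textbf{(S)} handling the passage between the two norms. Your outline does follow that route in its broad strokes, and two of your four steps are sound as written: injectivity of $\Pi$ on $\mathfrak{A}$ is exactly the consequence of Lemma \ref{LEM: AmenableLemma} you describe (if $V(v_{0}-w_{0})<0$ then $v_{0}-w_{0}\notin\operatorname{Ker}\Pi$, since $V\geq 0$ on $\operatorname{Ker}\Pi$), and continuity of $\Pi$ is trivial.

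The genuine gap is in surjectivity, which is the substantive content of the theorem. You say you would ``approximate by trajectories on finite intervals $[-T,0]$ with prescribed $\mathbb{E}^{-}$-component $\zeta$ at time $0$,'' but you never explain why such approximants exist, and this existence is precisely where the one nontrivial topological idea of the proof enters. The standard argument (as in \cite{Anikushin2020Red} and in Smith's work) fixes an amenable trajectory $v_{0}(\cdot)$ (here the hypothesis $\mathfrak{A}\neq\emptyset$ is used) and considers, for each $T>0$, the finite-dimensional map $g_{T}(w):=\Pi\varphi^{T}(w)$ restricted to the affine translate $v_{0}(-T)+\mathbb{E}^{-}$. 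Differences of points of this translate lie in $\mathbb{E}^{-}$, hence in the cone $\{V\leq 0\}$, so \textbf{(H3)} gives $-V(\varphi^{T}(w_{1})-\varphi^{T}(w_{2}))\geq e^{-2\nu T}(-V(w_{1}-w_{2}))$, and since $-V$ is a positive-definite quadratic form on $\mathbb{E}^{-}$ while $-V(d)\leq -V(\Pi d)$ for $d$ in the cone, one gets $|g_{T}(w_{1})-g_{T}(w_{2})|\geq c\,e^{-\nu T}\|w_{1}-w_{2}\|$. Thus $g_{T}$ is injective and proper, and Brouwer's invariance of domain plus connectedness of $\mathbb{E}^{-}$ makes it onto; this is what produces $w_{T}$ with $\Pi\varphi^{T}(w_{T})=\zeta$. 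The anchoring is not cosmetic either: because $w_{T}-v_{0}(-T)\in\mathbb{E}^{-}$, the boundary term $e^{-2\nu T}V(w_{T}-v_{0}(-T))$ in the squeezing inequality is non-positive, which yields the $T$-uniform bound $\delta\int_{-T}^{0}e^{2\nu s}|u_{T}(s)-v_{0}(s)|^{2}_{\mathbb{H}}ds\leq -V(u_{T}(0)-v_{0}(0))\leq c_{2}|\zeta-\Pi v_{0}(0)|^{2}$ that feeds the \textbf{(COM)}+\textbf{(S)} diagonal extraction and forces amenability of the limit. You flag ``non-escape in backward time'' as your anticipated obstacle, but you offer no mechanism to overcome it; no purely soft compactness argument can, since when $\mathfrak{A}=\emptyset$ the would-be approximants cannot stay bounded at all --- this is exactly why the theorem is a dichotomy. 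So your plan names the correct ingredients but omits the two ideas (the finite-dimensional invariance-of-domain/degree step, and the anchored a priori estimate on translates of $\mathbb{E}^{-}$) that actually make surjectivity work.
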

The proof follows the same arguments as in \cite{Anikushin2020Red}. A more general proof for compact cocycles in Banach spaces with the fibre-dependent operator $P$ is given in \cite{Anikushin2020Geom}.

If the hypotheses of Theorem \ref{TH: MainReductionTheorem} hold we define the map $\Phi \colon \mathbb{E}^{-} \to \mathfrak{A}$ by the relation $\Pi \Phi(\zeta) = \zeta$ for all $\zeta \in \mathbb{E}^{-}$. By Theorem \ref{TH: MainReductionTheorem}, the map $\Phi$ is a homeomorphism.

\begin{corollary}
	\label{COR: AmenableManifoldProp}
	Under the hypotheses of Theorem \ref{TH: MainReductionTheorem} suppose that $\mathfrak{A}$ is not empty; then we have
	\begin{enumerate}
		\item[\textbf{(A1)}] $\mathfrak{A}$ is an invariant $j$-dimensional topological manifold, i.~e. $\varphi^{t}(\mathfrak{A})=\mathfrak{A}$ for all $t \geq 0$.
		\item[\textbf{(A2)}] Any map $\varphi^{t}$, $t \geq 0$, is continuously invertible on $\mathfrak{A}$ and the restriction of $\varphi$ to $\mathfrak{A}$ can be extended to a flow on $\mathfrak{A}$.
		\item[\textbf{(A3)}] For any $v_{0} \in \mathbb{E}$ with the compact semi-orbit we have
		\begin{equation}
		\left\| \varphi^{t}(v_{0})-\Phi(\Pi \varphi^{t}(v_{0})) \right\|_{\mathbb{E}} \to 0 \text{ as } t \to +\infty.
		\end{equation}
	\end{enumerate}
\end{corollary}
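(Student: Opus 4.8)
The plan is to prove the three claims \textbf{(A1)}, \textbf{(A2)}, \textbf{(A3)} by leveraging Theorem \ref{TH: MainReductionTheorem}, which already provides the homeomorphism $\Pi \colon \mathfrak{A} \to \mathbb{E}^{-}$ and its inverse $\Phi \colon \mathbb{E}^{-} \to \mathfrak{A}$. Since $\mathbb{E}^{-}$ is a $j$-dimensional linear space and $\Phi$ is a homeomorphism onto $\mathfrak{A}$, the property \textbf{(A1)} that $\mathfrak{A}$ is a $j$-dimensional topological manifold is immediate. The substantive part of \textbf{(A1)} is invariance: I would show $\varphi^{t}(\mathfrak{A}) = \mathfrak{A}$ for all $t \geq 0$. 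The inclusion $\varphi^{t}(\mathfrak{A}) \subset \mathfrak{A}$ follows because the time-shift of an amenable trajectory is again an amenable trajectory (the defining integral $\int_{-\infty}^{0} e^{2\nu s}|v(s)|_{\mathbb{H}}^2\,ds$ remains finite under a positive time shift, using \textbf{(H3)} to control the tail). For the reverse inclusion $\mathfrak{A} \subset \varphi^{t}(\mathfrak{A})$, I would use that every point of $\mathfrak{A}$ lies on a complete amenable trajectory, so its value at time $-t$ is again in $\mathfrak{A}$ and maps onto the given point under $\varphi^{t}$.

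For \textbf{(A2)}, continuous invertibility of $\varphi^{t}$ on $\mathfrak{A}$ is where Lemma \ref{LEM: AmenableLemma} does the essential work. Injectivity on $\mathfrak{A}$ follows from Lemma \ref{LEM: AmenableLemma}: if $\varphi^{t}(v_1(0)) = \varphi^{t}(v_2(0))$ for two amenable trajectories, then the trajectories coincide from time $t$ onward, and backward uniqueness (again a consequence of Lemma \ref{LEM: AmenableLemma}, since two distinct amenable trajectories must satisfy $V(v_1 - v_2) < 0$ everywhere and cannot merge) forces $v_1 = v_2$. Surjectivity onto $\mathfrak{A}$ is exactly the invariance established in \textbf{(A1)}. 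The continuity of the inverse map on the image is most cleanly seen through the conjugacy with $\mathbb{E}^{-}$: via $\Phi$ and $\Pi$, the flow on $\mathfrak{A}$ is topologically conjugate to a flow on the finite-dimensional space $\mathbb{E}^{-}$, and I would define the extension to a flow by setting $\varphi^{-t}|_{\mathfrak{A}} := \Phi \circ (\text{backward shift on trajectories}) \circ \Pi$, then verify the group property and joint continuity using continuity of $\Phi$, $\Pi$, and the semiflow.

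For \textbf{(A3)}, the exponential-type attraction of compact semi-orbits toward $\mathfrak{A}$, I would argue as follows. Given $v_0$ with compact positive semi-orbit, its $\omega$-limit set $\omega(v_0)$ is nonempty, compact, and invariant; moreover every point of $\omega(v_0)$ admits a bounded complete trajectory, which by the compactness-plus-squeezing mechanism is amenable, so $\omega(v_0) \subset \mathfrak{A}$. The goal is then to show that $\varphi^{t}(v_0)$ and its projection image $\Phi(\Pi \varphi^{t}(v_0))$ become arbitrarily close. I expect the main obstacle to be precisely this convergence estimate: one must compare the actual trajectory with the unique amenable trajectory sharing the same $\mathbb{E}^{-}$-projection and show the difference decays in the $\|\cdot\|_{\mathbb{E}}$-norm. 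The strategy is to first obtain decay in the $|\cdot|_{\mathbb{H}}$-norm by exploiting the squeezing inequality \textbf{(H3)} applied to the pair $\varphi^{s}(v_0)$ and $\Phi(\Pi \varphi^{s}(v_0))$: since their difference is controlled by $V$ and $V$ is non-positive along this comparison while the integral term forces $|\varphi^{s}(v_0) - \Phi(\Pi\varphi^{s}(v_0))|_{\mathbb{H}}^2$ to be integrable against $e^{2\nu s}$, one deduces $\mathbb{H}$-norm convergence to zero. Then the smoothing estimate \textbf{(S)} upgrades $\mathbb{H}$-convergence to $\mathbb{E}$-convergence by applying it on the time step $\tau_S$, giving $\|\varphi^{t}(v_0) - \Phi(\Pi\varphi^{t}(v_0))\|_{\mathbb{E}} \leq C_S |\varphi^{t-\tau_S}(v_0) - \Phi(\Pi\varphi^{t-\tau_S}(v_0))|_{\mathbb{H}} \to 0$. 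The delicate point in this last step is ensuring that $\Phi(\Pi\varphi^{t-\tau_S}(v_0))$ is itself a genuine point on the amenable manifold whose image under $\varphi^{\tau_S}$ tracks $\Phi(\Pi\varphi^{t}(v_0))$, which requires the invariance and invertibility from \textbf{(A1)} and \textbf{(A2)} together with commutation of $\varphi$ with the projection dynamics.
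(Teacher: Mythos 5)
Your treatment of \textbf{(A1)} and \textbf{(A2)} is essentially the paper's own argument: invariance of $\mathfrak{A}$ from shifting amenable trajectories, bijectivity of $\varphi^{t}$ on $\mathfrak{A}$ from Lemma \ref{LEM: AmenableLemma}, and the manifold structure from Theorem \ref{TH: MainReductionTheorem}. One point you gloss over: transporting the problem to $\mathbb{E}^{-}$ via the conjugacy does not by itself yield continuity of the inverse --- a continuous bijection of $\mathbb{R}^{j}$ onto itself (equivalently, of $\mathfrak{A}$ onto itself) is a homeomorphism precisely because of the Brouwer theorem on invariance of domain, which is what the paper invokes; the conjugacy merely relocates the question, it does not answer it.

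The genuine gap is in \textbf{(A3)}. Your plan applies the squeezing inequality \textbf{(H3)} to the pair $\varphi^{s}(v_{0})$ and $\Phi(\Pi\varphi^{s}(v_{0}))$, but \textbf{(H3)} compares two trajectories of the semiflow, i.e. curves $s \mapsto \varphi^{s}(v_{1})$ and $s \mapsto \varphi^{s}(v_{2})$ with fixed initial data. The curve $s \mapsto \Phi(\Pi\varphi^{s}(v_{0}))$ is not a trajectory: the semiflow does not commute with $\Phi \circ \Pi$ off the manifold, i.e. in general $\varphi^{\tau}(\Phi(\Pi w)) \neq \Phi(\Pi\varphi^{\tau}(w))$. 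This same false commutation is exactly what your final step (the upgrade from $|\cdot|_{\mathbb{H}}$ to $\|\cdot\|_{\mathbb{E}}$ via \textbf{(S)}) needs, so that step fails as written. Moreover your sign claim is wrong: since $\Pi\Phi = \operatorname{id}$ on $\mathbb{E}^{-}$, the difference $\varphi^{s}(v_{0}) - \Phi(\Pi\varphi^{s}(v_{0}))$ lies in $\operatorname{Ker}\Pi$, where $V$ is \emph{positive} on nonzero vectors, not non-positive. What you are attempting is in effect the exponential tracking property, which the paper explicitly says requires a strengthened form of \textbf{(S)} and is deferred to \cite{Anikushin2020Geom}. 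The intended ``easy proof by contradiction'' is soft and needs no squeezing at all: since $\gamma^{+}(v_{0})$ is compact, every point of $\omega(v_{0})$ lies on a bounded complete trajectory contained in $\omega(v_{0})$, and any bounded complete trajectory is amenable because $\int_{-\infty}^{0}e^{2\nu s}|v(s)|^{2}_{\mathbb{H}}\,ds$ converges trivially; hence $\omega(v_{0}) \subset \mathfrak{A}$ and $\Phi(\Pi u) = u$ for every $u \in \omega(v_{0})$. If \textbf{(A3)} failed, there would be $\varepsilon > 0$ and $t_{k} \to +\infty$ with $\|\varphi^{t_{k}}(v_{0}) - \Phi(\Pi\varphi^{t_{k}}(v_{0}))\|_{\mathbb{E}} \geq \varepsilon$; by compactness pass to a subsequence with $\varphi^{t_{k}}(v_{0}) \to u \in \omega(v_{0})$, and then continuity of $\Phi \circ \Pi$ gives $\Phi(\Pi\varphi^{t_{k}}(v_{0})) \to \Phi(\Pi u) = u$, contradicting the lower bound $\varepsilon$.
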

\begin{proof}
	The property in \textbf{(A1)} follows from the definition of $\mathfrak{A}$. By Lemma \ref{LEM: AmenableLemma}, for any $t \geq 0$ the map $\varphi^{t} \colon \mathfrak{A} \to \mathfrak{A}$ is bijective and, by the Brouwer theorem on invariance of domain, it is also a homeomorphism. From this and the group property, we have that the map $\mathbb{R} \times \mathfrak{A} \ni (t,v) \mapsto \varphi^{t}(v)$, where for $t < 0$ the map $\varphi^{t} \colon \mathfrak{A} \to \mathfrak{A}$ is defined as the inverse to $\varphi^{-t} \colon \mathfrak{A} \to \mathfrak{A}$, is continuous. Thus, \textbf{(A2)} holds. The convergence in \textbf{(A3)} can be easily proved by contradiction.
\end{proof}

\subsection{Transversals of flows at non-stationary points}

Let $\xi^{t} \colon \mathcal{X} \to \mathcal{X}$, where $t \in \mathbb{R}$, be a flow on a complete metric space $\mathcal{X}$. Suppose $\varepsilon>0$ and a point $x_{0} \in \mathcal{X}$ are given. A subset $\mathcal{S} \subset \mathcal{X}$ is called an $\varepsilon$-\textit{section} of $\xi$ at $x_{0}$ if $x_{0} \in \mathcal{S}$, the set $\mathcal{U}=\mathcal{U}(\varepsilon,\mathcal{S}):=\bigcup_{t \in [-\varepsilon,\varepsilon]} \xi^{t}(S)$ is a topological neighbourhood\footnote{That is $x_{0}$ belongs to the interior of $\mathcal{U}$.} of $x_{0}$ in $\mathcal{X}$ and for every $y \in \mathcal{U}$ there exists a unique point $y_{0} \in \mathcal{S}$ and a unique time moment $t \in [-\varepsilon,\varepsilon]$ such that $\xi^{t}(y_{0})=y$. 

Now suppose $\mathcal{X}$ is a two-dimensional manifold. A set $\mathcal{T} \subset \mathcal{X}$ is called an $\varepsilon$-\textit{transversal} of $\xi$ at $x_{0}$ if it is simultaneously an $\varepsilon$-section and a homeomorphic image of a closed segment, for which $x_{0}$ corresponds to some of its interior points. We will often call $\mathcal{T}$ simply a \textit{transversal} of $\xi$ if for some $\varepsilon>0$ it is an $\varepsilon$-transversal of $\xi$ at some point $x_{0}$.

The following theorem is due to O. H\'{a}jek (see Chapter VII, Corollary 2.6 in \cite{Hajek1968}).
\begin{theorem}
	\label{TH: HajekTheorem}
	Let $\xi$ be a flow on some two-dimensional manifold $\mathcal{X}$; then for any non-stationary point $x_{0}$ and all sufficiently small $\varepsilon>0$ there exists an $\varepsilon$-transversal $\mathcal{T}$ of $\xi$ at $x_{0}$.
\end{theorem}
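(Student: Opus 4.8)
The plan is to reduce the statement to the construction of a single topological flow box at $x_0$, and then to exploit the two-dimensionality of $\mathcal{X}$ to realize the cross-section as an arc. First I would pass to a chart: since $\mathcal{X}$ is a $2$-manifold, fix a homeomorphism of an open neighbourhood of $x_0$ onto an open subset of $\mathbb{R}^2$, so that everything may be carried out in the plane. The next preliminary observation is that the orbit through $x_0$ is locally an injective arc. Indeed, put $G:=\{t\in\mathbb{R} : \xi^{t}(x_0)=x_0\}$; by continuity and the flow property $G$ is a closed subgroup of $\mathbb{R}$, hence equal to $\{0\}$, to $p\mathbb{Z}$ for some $p>0$, or to $\mathbb{R}$. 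The last case means $x_0$ is stationary and is excluded, so in the remaining cases $t\mapsto\xi^{t}(x_0)$ is injective on some interval $[-\alpha,\alpha]$, and $A:=\{\xi^{t}(x_0):t\in[-\alpha,\alpha]\}$ is a compact arc through $x_0$.

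The crux is the construction of a local section: a closed set $\mathcal{S}\ni x_0$ and a radius $\varepsilon_0>0$ such that $F(t,s):=\xi^{t}(s)$ is a homeomorphism of $[-\varepsilon_0,\varepsilon_0]\times\mathcal{S}$ onto a neighbourhood $\mathcal{U}_0$ of $x_0$. For a smooth flow one would simply take a line transverse to the generating vector field, but here $\xi$ is only continuous, so no vector field is available and the section must be produced topologically. This is where the Whitney--Bebutov theory of sections for continuous flows enters (and, in the planar case, the Jordan curve theorem), and it is the main obstacle in the argument. The injectivity of $t\mapsto\xi^{t}(x_0)$ on $[-\alpha,\alpha]$ together with continuity of the flow localizes the problem near the arc $A$; one then builds $\mathcal{S}$ as a level set of a continuous ``time'' function that is strictly monotone along orbits in a neighbourhood of $x_0$, and the flow-box property of $F$ (injectivity on the small time interval and openness of the image) is part of that construction.

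Finally I would extract the arc and deal with the quantifier on $\varepsilon$. For any $\varepsilon\le\varepsilon_0$ the restriction of $F$ to $[-\varepsilon,\varepsilon]\times\mathcal{S}$ is again injective and maps $(-\varepsilon,\varepsilon)\times\mathcal{S}$ onto an open subset of $\mathcal{U}_0$ containing $x_0$, so $\bigcup_{t\in[-\varepsilon,\varepsilon]}\xi^{t}(\mathcal{S})$ is a neighbourhood of $x_0$ and the $\varepsilon$-section property holds for every sufficiently small $\varepsilon$. It remains to replace $\mathcal{S}$ by an arc. The homeomorphism $\mathcal{U}_0\cong[-\varepsilon_0,\varepsilon_0]\times\mathcal{S}$ exhibits $\mathcal{U}_0$, an open subset of the $2$-manifold $\mathcal{X}$, as a product, and near the interior point $(0,x_0)$ this forces the factor $\mathcal{S}$ to be locally one-dimensional, so that a neighbourhood of $x_0$ in $\mathcal{S}$ is homeomorphic to an open interval; a local-homology (K\"unneth) comparison with $\mathbb{R}^2$ makes this precise. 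Choosing a closed subarc $\mathcal{T}\subset\mathcal{S}$ with $x_0$ as an interior point, the set $\bigcup_{t\in[-\varepsilon,\varepsilon]}\xi^{t}(\mathcal{T})$ is an open sub-box, so $\mathcal{T}$ is simultaneously an $\varepsilon$-section and a homeomorphic image of a closed segment with $x_0$ interior, i.e.\ an $\varepsilon$-transversal. Alternatively, and closer to H\'ajek's own route, one produces $\mathcal{T}$ directly in the plane as a short cross-cut meeting $A$ at $x_0$ and verifies the single-crossing property of nearby orbits by the Jordan curve theorem, which bypasses the abstract slicing of $\mathcal{S}$.
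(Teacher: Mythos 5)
The first thing to note is that the paper offers no proof to compare against: Theorem~\ref{TH: HajekTheorem} is imported verbatim from H\'ajek's monograph (Chapter VII, Corollary 2.6 of \cite{Hajek1968}), and the surrounding text only records the consequences used later (restriction to subarcs, invariance of domain). So your proposal has to be judged as a reconstruction of H\'ajek's theorem itself. As a program it is structured sensibly: the closed-subgroup argument for local injectivity of the orbit, the Whitney--Bebutov section/tube theorem for continuous flows, and the restriction argument handling the quantifier over small $\varepsilon$ are all correct (the last one works by continuity of the time coordinate of $F^{-1}$). But you have the difficulty located in the wrong place: the existence of a local section with the tube property is the standard, citable part, valid on any metric phase space; the content that is specific to a two-dimensional $\mathcal{X}$ is precisely the step you compress into one sentence, namely that the section can be taken to be an arc.

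That step is where the genuine gap lies. From the homeomorphism $F\colon[-\varepsilon_{0},\varepsilon_{0}]\times\mathcal{S}\to\mathcal{U}_{0}$ you want $\mathcal{S}$ to be locally an arc at $x_{0}$, and a K\"unneth/local-homology comparison only yields that $\mathcal{S}$ has the local homology of $\mathbb{R}$ near $x_{0}$; that alone does not make a space a $1$-manifold. To close the argument one must first pass to the open set $F^{-1}(\operatorname{Int}\mathcal{U}_{0})$ (your $\mathcal{U}_{0}$ is a closed tube, not open), extract an open box $(-\delta,\delta)\times W$ inside it, observe that this box is a $2$-manifold and hence an ANR, that its factor $W$ is an ANR (being a retract), locally compact and finite-dimensional, and then invoke a recognition theorem: generalized manifolds of dimension at most two are genuine manifolds (Wilder). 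This low-dimensional input is essential and cannot be absorbed into ``K\"unneth makes this precise'': in higher dimensions the corresponding claim is simply false --- Bing's dogbone space $B$ is not a manifold, yet $B\times\mathbb{R}$ is homeomorphic to $\mathbb{R}^{4}$ --- so being a factor of a manifold does not imply being a manifold, and any correct proof must use dimension two in a way your sketch never does. Your fallback (``produce $\mathcal{T}$ directly as a short cross-cut and verify the single-crossing property by the Jordan curve theorem'') is not a repair but a restatement of the theorem: for a merely continuous flow there is no vector field to be transverse to, and producing an arc that nearby orbits cross exactly once within the time window is exactly what Whitney's and H\'ajek's planar arguments are for. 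So either the recognition theorem must be invoked explicitly, or H\'ajek's construction followed; as written, the decisive step is missing.
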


Let $\mathcal{T}$ be a $\varepsilon$-transversal of $\xi$ at $x_{0}$. It is clear that any closed connected subset of $\mathcal{T}$ is also a $\varepsilon$-transversal of $\xi$. Without loss of generality, we may assume that $\mathcal{T}$ is given by a homeomorphism $h \colon [-1,1] \to \mathcal{T}$ with $h(0)=x_{0}$. In this case the map $[-\varepsilon,\varepsilon] \times [-1,1] \to \mathcal{U}=\mathcal{U}(\varepsilon,\mathcal{T})$ defined as $(t,s) \mapsto \varphi^{t}(h(s))$ gives a homeomorphism of the cube $[-\varepsilon,\varepsilon] \times [-1,1]$ onto its image. By the Brouwer theorem on invariance of domain, the interior $(-\varepsilon,\varepsilon) \times (-1,1)$ is mapped onto an open subset of $\mathcal{X}$.
\section{Trichotomy of $\omega$-limit sets}
\label{SEC: Trichotomy}

In this section we suppose that the hypotheses of Theorem \ref{TH: PBTrichotomy} are satisfied. Note that \textbf{(COM)} implies that any bounded semi-trajectory is compact (see, for example, see Lemma 4 in \cite{Anikushin2020Red}). From Corollary \ref{COR: AmenableManifoldProp} we get that there is an invariant 2-dimensional manifold $\mathfrak{A}$, which is homeomorphic to $\mathbb{E}^{-}$. By $\xi^{t} \colon \mathfrak{A} \to \mathfrak{A}$, where $t \in \mathbb{R}$, we denote the flow on $\mathfrak{A}$ given by the restriction of $\varphi$ to $\mathfrak{A}$. For convenience, for $v \in \mathcal{A}$ and $t < 0$ we usually write $\varphi^{t}(v)$ instead of $\xi^{t}(v)$. Let $w_{0} \in \mathbb{E}$ be a point with the compact positive semi-orbit $\gamma^{+}(w_{0})$. Recall that in this case the $\omega$-limit set of $w_{0}$, i.~e. $\omega(w_{0})$, is a non-empty compact connected invariant subset.

For a point $v \in \mathbb{E}$ and a number $r > 0$ by $\mathcal{O}_{r}(v)$ we denote the ball of radius $r$ centered at $v$. Analogously, for a subset $\mathcal{C} \subset \mathbb{E}$ by $\mathcal{O}_{r}(\mathcal{C})$ we denote the $r$-neighbourhood of $\mathcal{C}$.

The following lemma is an adaptation of the well-known Bendixson lemma. It rely on the attracting property \textbf{(A3)} of $\mathfrak{A}$. 
\begin{lemma}
	\label{LEM: BendixsonLemma}
	Let $\mathcal{T}$ be an $\varepsilon$-transversal of $\xi$ and $v_{0} \in \omega(w_{0})$. Then for the corresponding complete trajectory $v(t)$ with $v(0)=v_{0}$ the trajectory $\Pi v(t)$ crosses $\mathcal{T}$ in at most one point.
\end{lemma}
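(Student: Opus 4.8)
The plan is to reduce the statement to the classical Bendixson crossing argument on the plane $\mathbb{E}^{-}$, to which we transport the flow $\xi$ by means of the homeomorphism $\Pi \colon \mathfrak{A} \to \mathbb{E}^{-}$ (with inverse $\Phi$). First I would check that $\omega(w_{0}) \subset \mathfrak{A}$: given $v_{0} = \lim_{n} \varphi^{t_{n}}(w_{0})$ with $t_{n} \to +\infty$, property \textbf{(A3)} yields $\varphi^{t_{n}}(w_{0}) - \Phi(\Pi \varphi^{t_{n}}(w_{0})) \to 0$, and continuity of $\Pi$ and $\Phi$ then forces $v_{0} = \Phi(\Pi v_{0}) \in \mathfrak{A}$. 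Consequently the complete trajectory $v(t) = \xi^{t}(v_{0})$ lies in the (fully) invariant set $\omega(w_{0}) \subset \mathfrak{A}$ for all $t \in \mathbb{R}$, and since $\Pi|_{\mathfrak{A}}$ is injective the crossings of $\Pi v(\cdot)$ with $\Pi\mathcal{T}$ correspond bijectively to the crossings of $v(\cdot)$ with $\mathcal{T}$, all of which lie in $\omega(w_{0}) \cap \mathcal{T}$. It therefore suffices to prove that $\omega(w_{0}) \cap \mathcal{T}$ contains at most one point.

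Next I would establish the monotone crossing property for the projected semi-trajectory $\zeta(t) := \Pi \varphi^{t}(w_{0})$ in $\mathbb{E}^{-}$. Using the cube homeomorphism $(s,\sigma) \mapsto \Pi \xi^{s}(h(\sigma))$ recorded at the end of Section \ref{SEC: Preliminaries}, the neighbourhood $\Pi\,\mathcal{U}(\varepsilon,\mathcal{T})$ becomes a flow-box in which $\Pi\mathcal{T}$ is the slice $\{s=0\}$ and genuine $\xi$-trajectories cross it only in the direction of increasing $s$. I would show that consecutive intersections of $\zeta$ with $\Pi\mathcal{T}$ form a monotone sequence along the segment $\Pi\mathcal{T} \cong [-1,1]$: the arc of $\zeta$ joining two consecutive crossing points, together with the subsegment of $\Pi\mathcal{T}$ between them, bounds a Jordan curve in $\mathbb{E}^{-} \cong \mathbb{R}^{2}$, and the single admissible crossing direction traps the forward curve on one side of this curve, which gives monotonicity exactly as in the planar case.

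Finally, since a monotone sequence in $[-1,1]$ has a unique limit, the crossing points of $\zeta$ accumulate at no more than one point of $\Pi\mathcal{T}$. On the other hand, if $p \in \omega(w_{0}) \cap \mathcal{T}$ then $\varphi^{t_{n}}(w_{0}) \to p$ for some $t_{n} \to +\infty$, so $\zeta(t_{n}) \to \Pi p$; as $p$ is non-stationary and $\zeta$ stays close to $\mathfrak{A}$ by \textbf{(A3)}, the flow-box structure forces $\zeta$ to cross $\Pi\mathcal{T}$ arbitrarily near $\Pi p$ for large $t$, so every point of $\Pi(\omega(w_{0}) \cap \mathcal{T})$ is an accumulation point of the crossing sequence. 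Hence $\Pi(\omega(w_{0}) \cap \mathcal{T})$, and thus $\omega(w_{0}) \cap \mathcal{T}$, contains at most one point, which establishes the claim.

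I expect the main obstacle to be the second step, the monotone crossing property, precisely because $\zeta$ is not itself a trajectory of the induced flow on $\mathbb{E}^{-}$: the semi-orbit $\gamma^{+}(w_{0})$ need not lie on $\mathfrak{A}$. Making the Jordan-curve trapping rigorous therefore hinges on the attraction in \textbf{(A3)}, which guarantees that for large $t$ the curve $\zeta$ remains uniformly close to its shadow $\Phi(\zeta(t)) \in \mathfrak{A}$ and so crosses $\Pi\mathcal{T}$ in the same consistent direction as the genuine flow; only with this closeness does the classical orientation argument transfer from flow trajectories to the non-flow curve $\zeta$.
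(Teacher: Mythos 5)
Your reduction steps are fine ($\omega(w_{0}) \subset \mathfrak{A}$ via \textbf{(A3)}, and ``monotone crossings accumulate at a single point of the transversal''), but the central step --- the monotone crossing property for the projected semi-orbit $\zeta(t) = \Pi\varphi^{t}(w_{0})$ --- has a genuine gap, and it is not quite the one you flagged. The classical trapping argument needs two things: (i) the arc of the curve between consecutive crossings, together with the sub-segment of the transversal, forms a Jordan curve, and (ii) the future of the curve can never cross the arc part of that Jordan curve. For a genuine flow orbit, (i) holds because orbits are injective (or periodic), and (ii) is exactly uniqueness of the flow: an orbit cannot cross its own past. For $\zeta$ neither is available: $\Pi$ is not injective on $\mathbb{E}$, so $\zeta$ may self-intersect (the arc need not be simple), and nothing forbids $\zeta$ from re-crossing its own past arc. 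Your proposed repair via \textbf{(A3)} only yields a consistent crossing direction through $\Pi\mathcal{T}$, i.e.\ control of $\zeta$ on a short time window around each crossing; it does not prevent re-crossing of the past arc, because \textbf{(A3)} combined with continuity of $\varphi$ gives shadowing by genuine orbits only on time windows of a \emph{fixed finite} length, while consecutive crossings of $\zeta$ with $\Pi\mathcal{T}$ can be separated by arbitrarily long times. Consequently the arc of $\zeta$ between consecutive crossings is not uniformly close to any single flow arc, and the orientation/trapping argument does not transfer from orbits to the non-orbit curve $\zeta$.

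The paper's proof circumvents exactly this difficulty by building the Jordan curve from the limit object rather than from $\zeta$: assuming the genuine orbit $v(\cdot) \subset \omega(w_{0}) \subset \mathfrak{A}$ crosses $\mathcal{T}$ at two distinct points, it forms $\Gamma$ from the flow arc $v([t_{1},t_{2}])$ and the piece $\mathcal{T}_{t_{1}}^{t_{2}}$ of the transversal, picks $v_{in}=v(t_{in})$ inside and $v_{out}=v(t_{out})$ outside $\Gamma$ (both in $\omega(w_{0})$), and then uses \textbf{(A3)} together with continuity over the \emph{fixed} horizon $T=t_{in}-t_{out}$ to show that whenever the shadow $\Phi(\Pi\varphi^{t}(w_{0}))$ meets $\Gamma$ --- whether near the flow-arc part or near the transversal part --- the true trajectory is carried into $\mathcal{O}_{r}(v_{in})$; that is, every encounter with $\Gamma$ dumps the trajectory inside. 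Hence once the trajectory comes near $v_{in}$ it can never again approach $v_{out}$, contradicting $v_{out} \in \omega(w_{0})$. Because the Jordan curve there is a fixed compact set made of genuine flow pieces, only finite-time shadowing is required, which is precisely what \textbf{(A3)} supplies. To repair your argument, run the Bendixson construction on the limit trajectory $v(\cdot)$ instead of on $\zeta$ --- which in effect reproduces the paper's proof.
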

\begin{proof}
	Supposing the contrary, we obtain two moments of time $t_{1},t_{2}$ with $t_{1} < t_{2}$ such that $v(t_{1}) \in \mathcal{T}$, $v(t_{2}) \in \mathcal{T}$ and $v(t) \notin \mathcal{T}$ for all $t \in (t_{1},t_{2})$. Consider the curve $\Gamma$ given by the part of the trajectory $v(t)$ for $t \in [t_{1},t_{2}]$ and the part of the transveral $\mathcal{T}$ between $v(t_{1})$ and $v(t_{2})$, which we denote by $\mathcal{T}_{t_{1}}^{t_{2}}$. Clearly, $\Gamma$ is a simple closed curve and, by the Jordan curve theorem, $\Gamma$ divides $\mathfrak{A}$, which is a homeomorphic image of the plane, into two parts: the interior (the bounded part) and the exterior (the unbounded part). From this there are only two possible cases, in which $\varphi^{t}$ maps $\mathcal{T}_{t_{1}}^{t_{2}}$ into one of these two parts (see Fig. \ref{Fig: Bendixson2Cases}). Let us consider the first case (Fig. \ref{fig:sub1}). The second one (Fig. \ref{fig:sub2}) can be treated analogously.
	
	Consider two points $v_{in}=v(t_{in})$ and $v_{out}=v(t_{out})$ corresponding to two moments of time $t_{in} > t_{2}$ and $t_{out} < t_{1}$ such that $v_{in}$ belongs to the interior and $v_{out}$ belongs to the exterior of $\Gamma$. By definition, we have $v_{in},v_{out} \in \omega(w_{0})$. Let $r>0$ be given such that the open balls of radii $r$ centred at $v_{in}$ and $v_{out}$ respectively do not intersect with the $r$-neighbourhood of $\Gamma$.
	
	Let $\mathcal{K} \subset \mathbb{E}$ be any compact set containing the semi-trajectory $\varphi^{t}(w_{0})$, $t \geq 0$, the set $\Phi(\Pi\varphi^{t}(w_{0}))$, $t \geq 0$, and the set $\mathcal{T}^{t_{2}}_{t_{1}}$. In particular, $\omega(w_{0}) \subset \mathcal{K}$. Put $T:=t_{in}-t_{out}$. By the continuity of the semi-flow $\varphi^{t}$, $t \geq 0$, there is $\delta>0$ such that $\|\varphi^{t}(v_{1})-\varphi^{t}(v_{2})\|_{\mathbb{E}} < r$ for all $t \in [0,T]$ provided that $v_{1},v_{2} \in \mathcal{K}$ and $\|v_{1}-v_{2}\|_{\mathbb{E}} < \delta$.
	
	In virtue of \textbf{(A3)} there is $t_{\delta}>0$ such that $\|\varphi^{t}(w_{0})-\Phi(\Pi \varphi^{t}(w_{0}) )\|_{\mathbb{E}} < \delta/2$ holds for all $t \geq t_{\delta}$. Since $v_{in},v_{out} \in \omega(w_{0})$ there are two moments of time $t^{(r)}_{in}>t_{\delta}$ and $t^{(r)}_{out}>t^{(r)}_{in}$ such that $\|\varphi^{t^{(r)}_{in}}(w_{0})-v_{in}\|_{\mathbb{E}}<r$, $\|\varphi^{t^{(r)}_{out}}(w_{0})-v_{out}\|_{\mathbb{E}}<r$, $\|\Phi(\Pi\varphi^{t^{(r)}_{in}}(w_{0}))-v_{in}\|_{\mathbb{E}}<r$ and $\|\Phi(\Pi\varphi^{t^{(r)}_{out}}(w_{0}))-v_{out}\|_{\mathbb{E}}<r$. Since the trajectory of $w_{0}$ is continuous there must be a moment of time $t_{0} \in (t^{(r)}_{in}, t^{(r)}_{out})$ with $\Phi(\Pi \varphi^{t_{0}}(w_{0})) \in \Gamma$. Denote $\Gamma_{t_{1}}^{t_{2}}:=\{ \varphi^{t}(v_{0}) \ | \ t \in [t_{1},t_{2}] \}$ Suppose $\Phi(\Pi \varphi^{t_{0}}(w_{0})) \in \mathcal{O}_{\delta/2}(\Gamma_{t_{1}}^{t_{2}})$, i.~e. $\|\Phi(\Pi \varphi^{t_{0}}(w_{0}))-\varphi^{\widetilde{t}}(v_{0})\|_{\mathbb{E}}<\delta/2$ for some $\widetilde{t} \in [t_{1},t_{2}]$. From this it follows that for $v_{1}:=\varphi^{t_{0}}(w_{0})$ and $v_{2}:=\varphi^{\widetilde{t}}(v_{0})$ we have $\|v_{1}-v_{2}\|_{\mathbb{E}} < \delta$, $v_{1},v_{2} \in \mathcal{K}$ and, consequently, $\|\varphi^{t}(v_{1})-\varphi^{t}(v_{2})\|_{\mathbb{E}} < r$ for $t \in [0,T]$. Thus for all $t_{0} \geq t_{\delta}$ each time we have $\varphi^{t_{0}}(w_{0}) \in \mathcal{O}_{\delta/2}(\Gamma_{t_{1}}^{t_{2}})$ the point $\varphi^{t+t_{0}}(w_{0})$ remains in $\mathcal{O}_{r}(\Gamma_{t_{1}}^{t_{2}})$ for $t \in [0,T]$ and, moreover, $\varphi^{t'+t_{0}}(w_{0}) \in \mathcal{O}_{r}(v_{in})$ for some $t' \in [0,T]$. So, for $t \geq t_{\delta}$ the curve $\Phi(\Pi \varphi^{t}(w_{0}) )$ cannot reach $\mathcal{O}_{r}(v_{out})$ crossing $\Gamma \cap \mathcal{O}_{\delta/2}(\Gamma_{t_{1}}^{t_{2}})$.
	
	Consider the remaining part of the $\varepsilon$-transversal, i.~e. $\mathcal{S}_{t_{1}}^{t_{2}}:=\mathcal{T}_{t_{1}}^{t_{2}} \setminus \mathcal{O}_{\delta/2}(\Gamma_{t_{1}}^{t_{2}})$. There is $r>d>0$ such that $\mathcal{O}_{d}\left(\varphi^{\varepsilon}(\mathcal{S}_{t_{1}}^{t_{2}})\right)$ lies in the bounded part of the plane and do not intersect $\Gamma$. For some $\delta_{1}>0$ we have $\|\varphi^{t}(v_{1})-\varphi^{t}(v_{2})\|_{\mathbb{E}}<d$ for all $t \in [0,\varepsilon]$ provided that $v_{1},v_{2} \in \mathcal{K}$ and $\|v_{1}-v_{2}\|_{\mathbb{E}} < \delta_{1}$. Let $t_{\delta_{1}}>0$ be such that $\|\varphi^{t}(w_{0})-\Phi(\Pi \varphi^{t}(w_{0}) )\|_{\mathbb{E}} < \delta_{1}$ for all $t \geq t_{\delta_{1}}$. So, if $\Phi(\Pi\varphi^{t_{0}}(w_{0})) \in \mathcal{S}_{t_{1}}^{t_{2}}$ for some $t_{0} \geq t_{\delta_{1}}$ we immediately have $\varphi^{t_{0}+\varepsilon}(w_{0}) \in \mathcal{O}_{d}(\varphi^{\varepsilon}(\mathcal{S}_{t_{1}}^{t_{2}}))$.
	
	Thus, if for some $t_{0} \geq \max\{t_{\delta},t_{\delta_{1}}\}$ we have $\varphi^{t_{0}}(w_{0}) \in \mathcal{O}_{r}(v_{in})$ then $\varphi^{t}(w_{0}) \notin \mathcal{O}_{r}(v_{out})$ for all $t \geq t_{0}$ that leads to a contradiction with $v_{out} \in \omega(w_{0})$.
\end{proof}

\begin{figure}
	\centering
	\begin{subfigure}{.5\textwidth}
		\centering
		\includegraphics[width=0.8\linewidth]{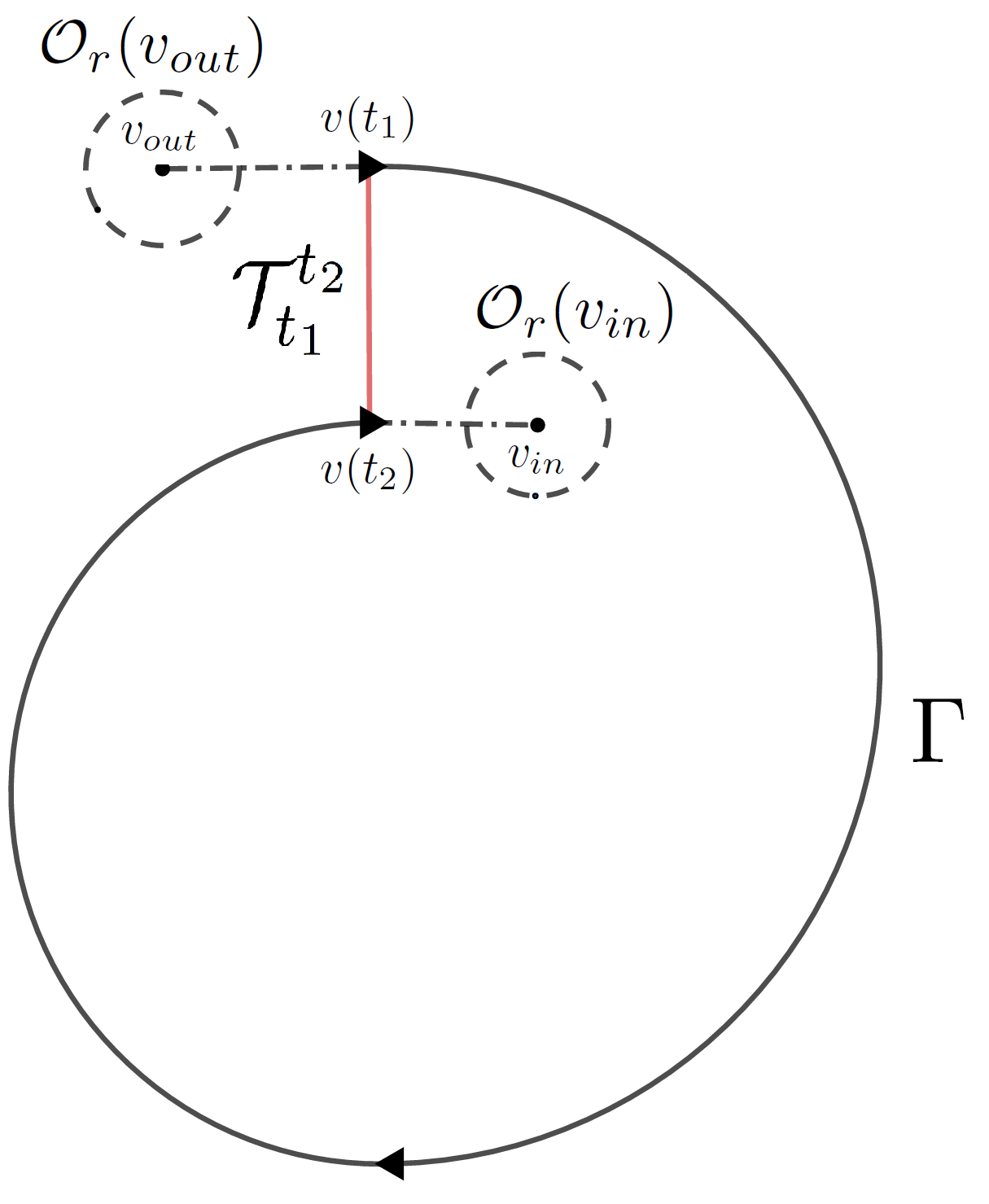}
		\caption{The transversal $\mathcal{T}_{t_{1}}^{t_{2}}$ (red) goes into \\ the interior of $\Gamma$.}
		\label{fig:sub1}
	\end{subfigure}%
	\begin{subfigure}{.5\textwidth}
		\centering
		\includegraphics[width=0.8\linewidth]{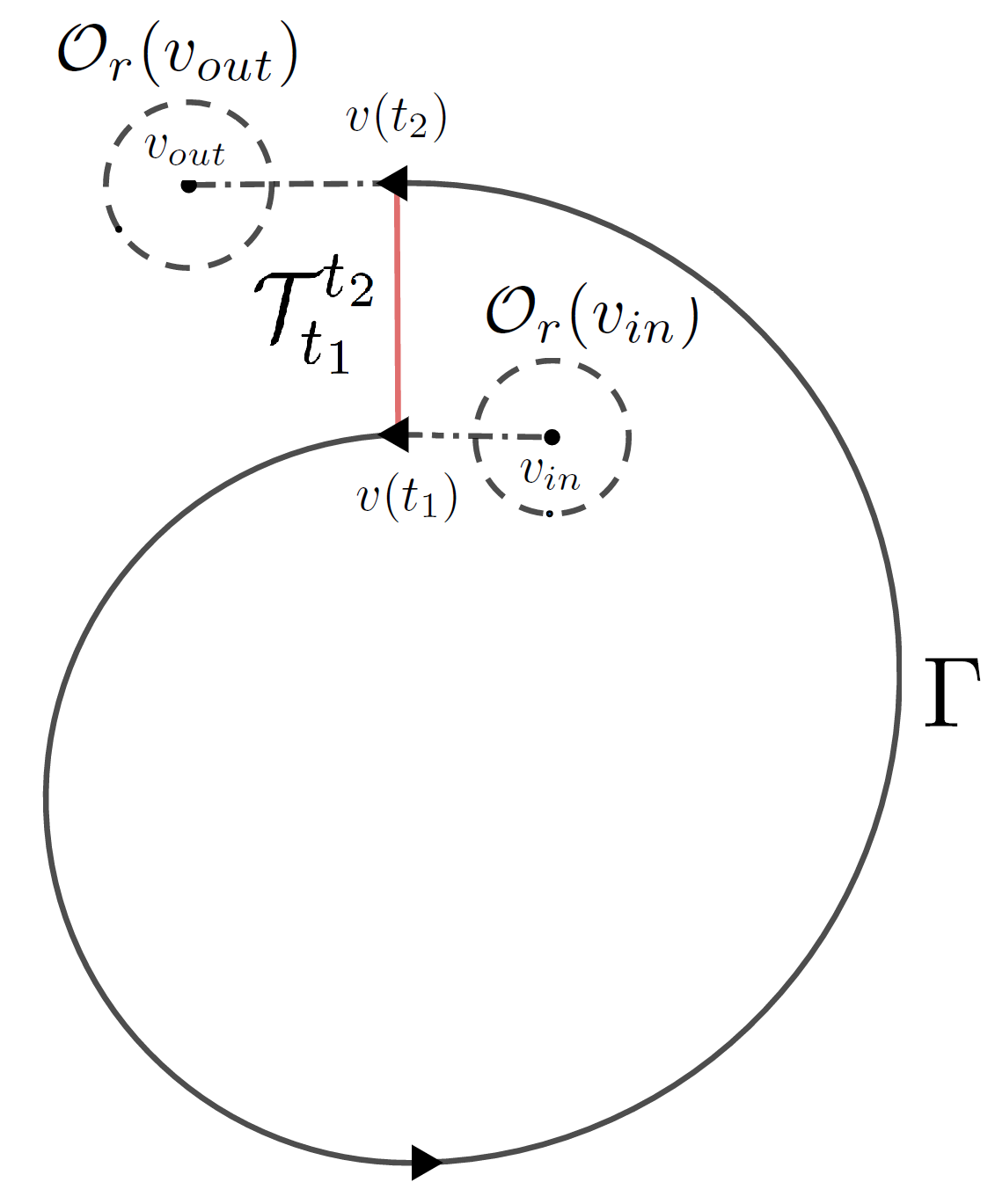}
		\caption{The transversal $\mathcal{T}_{t_{1}}^{t_{2}}$ (red) goes into \\ the exterior of $\Gamma$.}
		\label{fig:sub2}
	\end{subfigure}
	\caption{Two cases in Lemma \ref{LEM: BendixsonLemma}.}
	\label{Fig: Bendixson2Cases}
\end{figure}

The following lemma uses only the strict monotonicity given by \textbf{(H3)}.
\begin{lemma}
	\label{LEM: PeriodicSeparateLemma}
	Let $v_{1},v_{2} \in \mathbb{E}$ be two periodic points with distinct orbits $\gamma(v_{1})$ and $\gamma(v_{2})$ respectively; then there exists $\delta=\delta(v_{1},v_{2})>0$ such that for any $w_{0} \in \mathbb{E}$ with $\operatorname{dist}(w_{0},\gamma(v_{1})) < \delta$ we have $\Pi \varphi^{t}(w_{0}) \not\in \Pi\gamma(v_{2})$ for any $t \geq 0$. 
\end{lemma}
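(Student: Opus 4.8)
The plan is to reduce the statement to a cone-separation property of the two orbits and then propagate it forward by strict monotonicity. The key intermediate claim is that the orbits are \emph{uniformly cone-separated}, namely
\begin{equation*}
V(\varphi^{t}(v_{1})-\varphi^{s}(v_{2})) < 0 \quad \text{for all } t,s \in \mathbb{R},
\end{equation*}
so that, by compactness of $\gamma(v_{1})$ and $\gamma(v_{2})$ and continuity of $V$, there is $m>0$ with $V(a-b) \le -m$ for all $a \in \gamma(v_{1})$, $b \in \gamma(v_{2})$. First I would note that both orbits are bounded complete trajectories, hence amenable, so they lie on $\mathfrak{A}$; the claim then follows from Lemma \ref{LEM: AmenableLemma} applied to the distinct amenable trajectories $t \mapsto \varphi^{t}(v_{1})$ and $t \mapsto \varphi^{t+c}(v_{2})$ (for each fixed shift $c$), since as $t,c$ range over $\mathbb{R}$ the pair $(t,t+c)$ exhausts $\mathbb{R}^{2}$. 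Alternatively, using only the squeezing in \textbf{(H3)}, one observes that $g(t):=e^{2\nu t}V(\varphi^{t}(v_{1})-\varphi^{t+c}(v_{2}))$ satisfies $g(t) \le g(l)$ for all $l \le t-\tau_{V}$, while $g(l) \to 0$ as $l \to -\infty$ because the difference stays bounded; hence $g \le 0$ everywhere, and strict monotonicity upgrades the nonstrict inequality to a strict one (the difference one unit earlier lies in $\mathcal{C}_{V}$, so its image lies in $\operatorname{Int}\mathcal{C}_{V}$).

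Next I would transfer the separation to points near $\gamma(v_{1})$. Pick $a \in \gamma(v_{1})$ with $\|w_{0}-a\|_{\mathbb{E}} = \operatorname{dist}(w_{0},\gamma(v_{1})) < \delta$ (the distance is attained since $\gamma(v_{1})$ is compact) and set $e:=w_{0}-a$. Since $V(\cdot)=\langle \cdot, P \cdot\rangle$ is a continuous quadratic form, for every $b \in \gamma(v_{2})$ we expand
\begin{equation*}
V(w_{0}-b)=V(a-b)+2\langle e, P(a-b)\rangle + V(e) \le -m + 2\|P\|\,D\,\|e\|_{\mathbb{E}} + \|P\|\,\|e\|_{\mathbb{E}}^{2},
\end{equation*}
where $D:=\sup\{\|a-b\|_{\mathbb{E}} : a \in \gamma(v_{1}),\, b \in \gamma(v_{2})\} < \infty$. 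Choosing $\delta=\delta(v_{1},v_{2})>0$ so small that the last two terms do not exceed $m/2$ whenever $\|e\|_{\mathbb{E}}<\delta$, we obtain $V(w_{0}-b) \le -m/2 < 0$ for all $b \in \gamma(v_{2})$, that is, $w_{0}-\varphi^{s}(v_{2}) \in \operatorname{Int}\mathcal{C}_{V}$ for every $s \in \mathbb{R}$.

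I would then propagate this forward. Fix $t \ge 0$ and $s \in \mathbb{R}$ and write $s=(s-t)+t$. By the previous step the distinct points $w_{0}$ and $\varphi^{s-t}(v_{2})$ satisfy $w_{0}-\varphi^{s-t}(v_{2}) \in \mathcal{C}_{V}$, so strict monotonicity (the Remark following \textbf{(H3)}) gives $\varphi^{t}(w_{0})-\varphi^{s}(v_{2}) \in \operatorname{Int}\mathcal{C}_{V}$ for $t>0$, while for $t=0$ this holds by construction. Hence
\begin{equation*}
V(\varphi^{t}(w_{0})-\varphi^{s}(v_{2})) < 0 \quad \text{for all } t \ge 0,\ s \in \mathbb{R}.
\end{equation*}
Finally I would conclude by the defining property of $\Pi$: if $\Pi\varphi^{t}(w_{0}) = \Pi\varphi^{s}(v_{2})$ held for some $t \ge 0$ and $s$, then $\varphi^{t}(w_{0})-\varphi^{s}(v_{2}) \in \operatorname{Ker}\Pi = \mathbb{E}^{+}$, on which $V$ is nonnegative, contradicting the strict inequality above. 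Thus $\Pi\varphi^{t}(w_{0}) \neq \Pi\varphi^{s}(v_{2})$ for every $s$, i.e. $\Pi\varphi^{t}(w_{0}) \notin \Pi\gamma(v_{2})$ for all $t \ge 0$.

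The main obstacle is the uniform cone-separation of the two distinct orbits together with making the perturbation estimate uniform in the free time parameter $s$ on $\gamma(v_{2})$; once these are in place, the forward propagation and the $\Pi$/$V$ incompatibility are routine.
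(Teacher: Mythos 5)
Your proof is correct in substance but takes a genuinely different route from the paper's. The paper argues by contradiction: assuming there are points $w_{\delta}$ arbitrarily close to $\gamma(v_{1})$ whose $\Pi$-projections hit $\Pi\gamma(v_{2})$ at times $t_{\delta}$, it notes $t_{\delta}\to+\infty$, extracts convergent subsequences from the compact orbits, applies \textbf{(H3)} once on $[0,t_{\delta}]$ (the endpoint term $V(\varphi^{t_{\delta}}(w_{\delta})-v_{2,\delta})$ is nonnegative because the difference lies in $\operatorname{Ker}\Pi$), and reaches a contradiction in the limit with Lemma \ref{LEM: AmenableLemma}. You instead give a direct, quantitative argument: uniform cone separation $V(a-b)\le -m<0$ of the two orbits (again from Lemma \ref{LEM: AmenableLemma}, or from the weighted function $g$), a perturbation estimate making $\delta$ explicit in terms of $m$, $\|P\|$ and the diameter $D$, and forward propagation. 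What your route buys: no subsequence extraction, an explicit $\delta$, and a stronger conclusion — the entire forward orbit of $w_{0}$ stays uniformly $V$-negative relative to $\gamma(v_{2})$, which excludes $\Pi$-coincidence since $V\ge 0$ on $\operatorname{Ker}\Pi$ (the same final incompatibility the paper uses). What the paper's route buys is that it only ever invokes \textbf{(H3)} at large times, which matters for the following caveat.

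The one weak point in your argument is the propagation step at small times. When $\tau_{V}>0$, \textbf{(H3)} gives $\varphi^{t}(w_{0})-\varphi^{s}(v_{2})\in\operatorname{Int}\mathcal{C}_{V}$ only for $t\ge\tau_{V}$; the Remark you cite asserts strict monotonicity for all $t>0$, but that stronger claim does not follow from \textbf{(H3)} alone (the squeezing inequality says nothing about intervals shorter than $\tau_{V}$), and the paper's own proof is careful to use \textbf{(H3)} only after observing that the relevant times tend to infinity. So for $t\in(0,\tau_{V})$ your proof as written rests on an unjustified statement. The gap is easily repaired: by compactness of $\gamma(v_{1})$ and joint continuity of the semiflow, shrink $\delta$ further so that $\operatorname{dist}(\varphi^{t}(w_{0}),\gamma(v_{1}))<\varepsilon$ for all $t\in[0,\tau_{V}]$, where $\varepsilon$ is the threshold in your perturbation estimate; then your time-zero argument applies verbatim at every $t\in[0,\tau_{V}]$, and \textbf{(H3)} covers $t\ge\tau_{V}$ exactly as you wrote. (When $\tau_{V}=0$, as in the paper's applications, your proof is complete as stated.)
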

\begin{proof}
	Suppose the contrary, i.~e. that for every $\delta>0$ there exists a point $w_{\delta} \in \mathbb{E}$ such that $\operatorname{dist}(w_{\delta},\gamma(v_{1})) < \delta$ and there is $t_{\delta} > 0$ such that $\Pi \varphi^{t_{\delta}}(w_{\delta}) \in \Pi \gamma(v_{2})$. Let $v_{1,\delta} \in \gamma(v_{1})$ and $v_{2,\delta} \in \gamma(v_{2})$ be such that $\|w_{\delta}-v_{1,\delta}\|_{\mathbb{E}}<\delta$ and $\Pi \varphi^{t_{\delta}}(w_{\delta})=\Pi v_{2,\delta}$. Note that $t_{\delta} \to +\infty$ as $\delta \to 0+$. Let $\delta=\delta_{k}$, where $k=1,2,\ldots$, be a sequence tending to zero such that $v_{1,\delta_{k}} \to \overline{v}_{1} \in \gamma(v_{1})$, $v_{2,\delta_{k}} \to \overline{v}_{2} \in \gamma(v_{2})$ and $\varphi^{-t_{\delta}}(v_{2,\delta_{k}}) \to \widetilde{v}_{2} \in \gamma(v_{2})$ as $k \to \infty$. From \textbf{(H3)} with $r=t_{\delta}$, $l=0$, $v_{1}=w_{\delta}$ and $v_{2}=\varphi^{-t_{\delta}}(v_{2,\delta})$ we get (if $t_{\delta} \geq \tau_{V}$)
	\begin{equation}
	\label{EQ: LemmaPeriodicDeltaEq1}
	-e^{2\nu t_{\delta}}V( \varphi^{t_{\delta}}(w_{\delta}) - \varphi^{t_{\delta}}(\varphi^{-t_{\delta}}(v_{2,\delta})) ) + V(w_{\delta}-\varphi^{-t_{\delta}}(v_{2,\delta})) \geq 0.
	\end{equation}
	Since $\Pi \varphi^{t_{\delta}}(w_{\delta}) = \Pi v_{2,\delta}$, the first term in the left-hand side of \eqref{EQ: LemmaPeriodicDeltaEq1} is non-positive. Thus, we get
	\begin{equation}
	V(w_{\delta}-\varphi^{-t_{\delta}}(v_{2,\delta})) \geq 0.
	\end{equation}
	Our purpose is to show that the term in the left-hand side is negative for $\delta=\delta_{k}$ with sufficiently large $k$ that will lead to a contradiction. But this follows from the choice of $\delta_{k}$ and Lemma \ref{LEM: AmenableLemma} since as $k \to \infty$ we have
	\begin{equation}
	V(w_{\delta_{k}}-\varphi^{-t_{\delta_{k}}}(v_{2,\delta_{k}})) \to V(\overline{v}_{1},\widetilde{v}_{2}) < 0.
	\end{equation}
	Thus the lemma is proved.
\end{proof}

\begin{lemma}
	\label{LEM: PeriodicPointInLimitSet}
	If $v_{0} \in \omega(w_{0})$ is a periodic point then $\omega(w_{0}) = \gamma(v_{0})$.
\end{lemma}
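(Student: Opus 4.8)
The plan is to transfer the classical Poincar\'{e}--Bendixson argument to the two-dimensional flow $\xi$ on $\mathfrak{A}$, using the single-trajectory Bendixson property (Lemma \ref{LEM: BendixsonLemma}) and the attracting property \textbf{(A3)}. First, since $\omega(w_{0})$ is invariant and $v_{0}$ is periodic, we have $\gamma(v_{0}) \subseteq \omega(w_{0})$; the orbit $\gamma(v_{0})$ is a continuous image of a circle, hence compact and therefore closed in $\omega(w_{0})$. Recall that $\omega(w_{0})$ is nonempty, compact and connected. Consequently it suffices to show that $\gamma(v_{0})$ is \emph{open} in $\omega(w_{0})$: a nonempty subset that is simultaneously open and closed in a connected space must be the whole space, giving $\omega(w_{0}) = \gamma(v_{0})$. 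We may assume that $v_{0}$ is not a stationary point, so that $\gamma(v_{0})$ is a genuine closed orbit along which transversals exist.

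To prove openness I would fix a point $p \in \gamma(v_{0})$ and, since $p$ is non-stationary, invoke H\'{a}jek's theorem (Theorem \ref{TH: HajekTheorem}) to obtain an $\varepsilon$-transversal $\mathcal{T}$ of $\xi$ at $p$, together with its flow-box neighbourhood $\mathcal{U} = \bigcup_{t \in [-\varepsilon,\varepsilon]} \xi^{t}(\mathcal{T})$, which is a neighbourhood of $p$ in $\mathfrak{A}$. The crux is the claim
\[
\omega(w_{0}) \cap \mathcal{T} = \{ p \}.
\]
Granting it, any $q \in \omega(w_{0}) \cap \mathcal{U}$ has a unique representation $q = \xi^{t}(q')$ with $q' \in \mathcal{T}$ and $\lvert t \rvert \leq \varepsilon$; by invariance of $\omega(w_{0})$ the point $q'$ again lies in $\omega(w_{0}) \cap \mathcal{T} = \{p\}$, so $q = \xi^{t}(p) \in \gamma(v_{0})$. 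Hence $\mathcal{U} \cap \omega(w_{0}) \subseteq \gamma(v_{0})$, which is exactly openness of $\gamma(v_{0})$ in $\omega(w_{0})$ at the arbitrary point $p$.

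The remaining claim is where I expect the main obstacle. Suppose, for contradiction, that some $q \in \omega(w_{0}) \cap \mathcal{T}$ is distinct from $p$. By Lemma \ref{LEM: BendixsonLemma} each complete trajectory contained in $\omega(w_{0})$ meets $\mathcal{T}$ in at most one point, so $p$ and $q$ lie on distinct orbits of $\xi$; the desired contradiction must therefore come from the generating semiorbit of $w_{0}$. The difficulty is that $\varphi^{t}(w_{0})$ need not lie on the two-dimensional manifold $\mathfrak{A}$, so the planar ``monotone intersection'' mechanism cannot be applied to it directly. I would instead work with the shadow $\sigma(t) := \Phi(\Pi \varphi^{t}(w_{0})) \in \mathfrak{A}$, which by \textbf{(A3)} satisfies $\| \varphi^{t}(w_{0}) - \sigma(t) \|_{\mathbb{E}} \to 0$ as $t \to +\infty$, and establish that the successive crossings of $\mathcal{T}$ by the semiorbit cannot accumulate at two distinct points: two consecutive crossings together with the connecting trajectory arc and the subtended segment of $\mathcal{T}$ bound a Jordan domain in $\mathfrak{A}$, and the flow-box structure of $\mathcal{U}$ confines the orbit to one side of it. This is precisely the trapping argument already carried out in Lemma \ref{LEM: BendixsonLemma}, now transferred through \textbf{(A3)} to the behaviour of the genuine orbit near two distinct transversal points rather than near two crossings of a single limit trajectory. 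Since both $p$ and $q$ belong to $\omega(w_{0})$, the semiorbit of $w_{0}$ must return arbitrarily close to each of them infinitely often, forcing its crossings of $\mathcal{T}$ to accumulate at both $p$ and $q$; this contradicts the monotonicity just described, and therefore $q$ cannot exist. The careful transfer of the Jordan-separation and trapping from the manifold $\mathfrak{A}$ to the true orbit via \textbf{(A3)} is the step demanding the most care.
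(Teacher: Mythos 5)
Your reduction is sound: $\gamma(v_{0})$ is compact, hence closed in the connected set $\omega(w_{0})$; openness would follow from the flow-box structure once one knows $\omega(w_{0}) \cap \mathcal{T} = \{p\}$; and Lemma \ref{LEM: BendixsonLemma} does show that a second point $q \in \omega(w_{0}) \cap \mathcal{T}$ would lie on a different trajectory than $p$. The genuine gap is precisely the step you flag at the end: the ``trapping argument transferred through \textbf{(A3)}'' cannot be run. The trapping in the proof of Lemma \ref{LEM: BendixsonLemma} works only because the non-transversal part of the Jordan curve $\Gamma$ is an arc of a \emph{genuine limit trajectory} lying in $\omega(w_{0}) \subset \mathfrak{A}$: when $\varphi^{t}(w_{0})$ comes $\delta$-close to a point $\varphi^{\widetilde{t}}(v_{0})$ of that arc, continuity of the semiflow over a compact time window forces it to follow that trajectory into the interior region, because the forward evolution of every point of the arc is known. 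In your situation no such arc exists: the generating semiorbit does not lie in $\mathfrak{A}$, and its shadow $\sigma(t) = \Phi(\Pi \varphi^{t}(w_{0}))$ is \emph{not} an orbit of $\xi$ --- it can self-intersect, there is no coherent flow direction across it, and the $\xi$-orbits of points on a shadow arc are uncontrolled. Since $p$ and $q$ lie on distinct trajectories each meeting $\mathcal{T}$ exactly once, there is no Jordan curve made of a trajectory arc plus a subarc of $\mathcal{T}$ available at all; consequently the ``monotonicity of successive crossings'' of the generating orbit is never established, and the final contradiction has nothing to rest on.

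The paper closes exactly this hole with a tool your proposal never invokes: Lemma \ref{LEM: PeriodicSeparateLemma}, which is proved from the quadratic functional \textbf{(H3)} and Lemma \ref{LEM: AmenableLemma} rather than from planar topology, and which controls the genuine off-manifold semiorbit: once $\varphi^{t}(w_{0})$ is $\delta$-close to a periodic orbit, its projection can never afterwards meet $\Pi$ of a second periodic orbit. The paper's argument runs: by connectedness pick $\overline{v} \in \gamma(v_{0})$ non-isolated from $\omega(w_{0}) \setminus \gamma(v_{0})$; show --- by your flow-box/Lemma \ref{LEM: BendixsonLemma} reasoning applied to points of $\omega(w_{0})$ near the \emph{periodic} point $\overline{v}$ --- that all such nearby points are themselves periodic; this yields infinitely many distinct periodic orbits in $\omega(w_{0})$ accumulating on $\gamma(v_{0})$, so one can choose $\widetilde{v}, v_{sep} \in \omega(w_{0})$ with $\gamma(v_{sep})$ strictly separating $\gamma(v_{0})$ from $\gamma(\widetilde{v})$ in $\mathfrak{A}$; then the continuous planar curve $t \mapsto \Pi\varphi^{t}(w_{0})$, which visits a $\delta$-neighborhood of $\gamma(v_{0})$ and later a small ball around $\widetilde{v}$, must cross the Jordan curve $\Pi\gamma(v_{sep})$ at some intermediate time --- contradicting Lemma \ref{LEM: PeriodicSeparateLemma}. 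Note why a \emph{third} orbit is needed: the semiorbit merely approaching $q$ never forces its projection to lie exactly on $\Pi\gamma(q)$, so even a corrected two-orbit version of your contradiction cannot close; the strictly separating orbit is what converts ``gets close to both sides'' into ``must hit''. If you want to keep your open/closed framing, prove your key claim this way: points of $\omega(w_{0}) \cap \mathcal{T}$ near $p$ are periodic, then apply Lemma \ref{LEM: PeriodicSeparateLemma} with a separating periodic orbit.
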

\begin{proof}
To prove the statement we suppose the contrary, i.~e. that the set $\omega(w_{0}) \setminus \gamma(v_{0})$ is non-empty. Since $\omega(w_{0})$ is connected there exists a point $\overline{v} \in \gamma(v_{0})$ that is non-isolated from $\omega(w_{0}) \setminus \gamma(v_{0})$. Let $\mathcal{T} \subset \mathfrak{A}$ be an $\varepsilon$-transversal of $\restr{\varphi}{\mathfrak{A}}$ at $\overline{v}$ and put $\mathcal{U} = \bigcup_{t \in [-\varepsilon,\varepsilon]} \varphi^{t}(\mathcal{T})$. Let us show that if $\widetilde{v} \in \mathfrak{A}$ is sufficiently close to $\overline{v}$ then $\widetilde{v}$ is a periodic point. Indeed, any $\widetilde{v} \in \mathcal{U}$ has its trajectory crossing $\mathcal{T}$ at least once. If $\widetilde{v}$ is close enough to $\overline{v}$ then it must return to $\mathcal{U}$ after the period of $\overline{v}$ and by Lemma \ref{LEM: BendixsonLemma} it must cross $\mathcal{T}$ at the same point and, consequently, $\widetilde{v}$ is a periodic point. Since $\overline{v}$ is non-isolated from $\omega(w_{0}) \setminus \gamma(v_{0})$ it is a limit of a sequence of distinct periodic points from $\omega(w_{0})$. From this periodic points $\widetilde{v}, v_{sep} \in \omega(w_{0})$ can be chosen such that $\gamma(v_{sep})$ separates $\gamma(v_{0})$ and $\gamma(\widetilde{v})$. By Lemma \ref{LEM: PeriodicSeparateLemma} there is $\delta=\delta(v_{0},v_{sep})$ such that if $v \in \mathbb{H}$ and $\operatorname{dist}(v,\gamma(v_{0})) < \delta$ then $\Pi \varphi^{t}(v) \notin \Pi\gamma(v_{sep})$ for all $t \geq 0$. Since $v_{0}, \widetilde{v} \in \omega(w_{0})$ there must be moments of time $t_{0}>0$ and $\widetilde{t} > t_{0}$ such that $\operatorname{dist}(\varphi^{t_{0}}(w_{0}),\gamma(v_{0})) < \delta$ and $\varphi^{\widetilde{t}}(w_{0}) \in \mathcal{O}_{r}(\widetilde{v})$ with $r>0$ sufficiently small. But this gives a moment of time $t' \in (t_{0},\widetilde{t})$ with $\Pi \varphi^{t'}(w_{0}) \in \Pi\gamma(v_{sep})$ that contradicts to the previously established property. The lemma is proved.
\end{proof}

\begin{proof}[Proof of Theorem \ref{TH: PBTrichotomy}]
	Let $v_{0} \in \omega(w_{0})$ be a non-stationary point. Let us show that in this case either $\alpha(v_{0})$ and $\omega(v_{0})$ consist of stationary points or $\omega(w_{0})$ is a periodic orbit. If there is a non-stationary point $\widetilde{v}$ in any of these sets then the trajectory of $v_{0}$ must intersect a transversal at $\widetilde{v}$ infinitely many times. By Lemma \ref{LEM: BendixsonLemma} all these intersections coincide and the point $\widetilde{v}$ must be periodic. From Lemma \ref{LEM: PeriodicPointInLimitSet} it follows that $\omega(w_{0})=\gamma(\widetilde{v})$.
	
	So, either there is at least one stationary point in $\omega(w_{0})$ or $\omega(w_{0})$ is a periodic orbit (\textbf{(T2)} is realized). In the first case either the stationary point is the only point in $\omega(w_{0})$ (\textbf{(T1)} is realized) or we have \textbf{(T3)}.
\end{proof}

\begin{proof}[Proof of Corollary \ref{COR: Th1CorAlphaLimit}]
	Due to \textbf{(A1)} and \textbf{(A2)} the statement can be deduced from the Poincar\'{e}-Bendixson theorem for flows on the plane as in \cite{Hajek1968}. However, the key lemmas after obvious modifications and much simpler proofs show that the arguments used in the proof of Theorem \ref{TH: PBTrichotomy} still can be applied.
\end{proof}

\begin{proof}[Proof of Corollary \ref{COR: Th1CorIsolatedOrbit}]
	Let $\gamma_{0}$ be an isolated Lyapunov stable periodic orbit. There is $\varepsilon>0$ and $\delta>0$ such that $\varphi^{t}(v) \in \mathcal{O}_{\varepsilon}(\gamma_{0})$ for all $t \geq 0$ provided that $v \in \mathcal{O}_{\delta}(\gamma_{0})$ and the closure of $\mathcal{O}_{\varepsilon}(\gamma_{0})$ does not contain other periodic orbits and stationary points. Therefore, any $v \in \mathcal{O}_{\delta}(\gamma_{0})$ has a bounded (and, consequently, compact)  positive semi-orbit and its $\omega$-limit set in virtue of Theorem \ref{TH: PBTrichotomy} must coincide with $\gamma_{0}$. This shows the asymptotic orbital stability of $\gamma_{0}$.
\end{proof}

Let $v_{0}$ be a stationary point having a $k$-dimensional local unstable set $\mathcal{D}$. Under our assumptions it is clear that we always have $\mathcal{D} \subset \mathfrak{A}$. Therefore, any local unstable set has dimension $k \leq 2$.

\begin{proof}[Proof of Corollary \ref{COR: TerminalPoints}]
  	If $v_{0}$ is Lyapunov stable the statement is obvious. Let us consider the case when there is a $2$-dimensional unstable set $\mathcal{D}$ for $v_{0}$. By \textbf{(U1)} the set $\mathcal{D}$ is an open neighborhood of $v_{0}$ in $\mathfrak{A}$. To prove the statement it is sufficient to show that $v_{0}$ is the only stationary point in $\omega(w_{0})$. Indeed, if $v_{0}$ is the only stationary point and there is a non-stationary point $\widetilde{v} \in \omega(w_{0})$ then by Theorem \ref{TH: PBTrichotomy} it must be homoclinic to $v_{0}$ that contradicts to \textbf{(U3)}.
  	
  	Now suppose there is another stationary point $u_{0} \in \omega(v_{0})$. Let $r>0$ be a number and consider the closed ball of radius $r$ in $\mathfrak{A}$ (endowed with the metric from $\mathbb{E}$) centered at $v_{0}$, which we denote by $\mathcal{C}$, and the open ball of radius $r/2$ centered at $v_{0}$ in $\mathfrak{A}$, which we denote by $\mathcal{B}$. We assume that $r$ is chosen such that $\mathcal{C}$ is contained in $\mathcal{D}$ and therefore do not intersect with $u_{0}$. From \textbf{(U2)}, \textbf{(U3)} and the compactness of $\mathcal{C}$ we can find a number $T>0$ such that if $\widetilde{v} \in \mathcal{C}$ then $\varphi^{t}(\widetilde{v}) \in \mathcal{B}$ for all $t \leq -T$. From this it follows that
  	\begin{equation}
  	\label{EQ: TerminalProperty}
  	\text{if } \widetilde{v} \in \partial \mathcal{B} \text{ then } \varphi^{\widetilde{t}}(\widetilde{v}) \in \partial\mathcal{C} \text{ for some } \widetilde{t} \in (0,T).
  	\end{equation}
  	Let $d>0$ be such that $\mathcal{O}_{d}(\partial\mathcal{C}) \cap \mathcal{B} = \emptyset$. Consider a compact set $\mathcal{K}$ containing $\varphi^{t}(w_{0})$, $t \geq 0$, and $\Phi(\Pi \varphi^{t}(w_{0}))$, $t \geq 0$. Then there exists $\delta>0$ such that $\|\varphi^{t}(v_{1})-\varphi^{t}(v_{2})\|_{\mathbb{E}} < d$ for all $t \in [0,T]$ provided that $\|v_{1}-v_{2}\|_{\mathbb{E}} < \delta$ and $v_{1},v_{2} \in \mathcal{K}$. Using \textbf{(A3)} consider $t_{\delta}>0$ such that $|\varphi^{t}(w_{0})-\Phi(\Pi \varphi^{t}(w_{0}))| < \delta$ for all $t \geq t_{\delta}$. From this and \eqref{EQ: TerminalProperty} it follows that any time we have $\Phi(\Pi\varphi^{t_{0}}(w_{0})) \in \partial\mathcal{B}$ for some $t_{0} \geq t_{\delta}$ there is $\widetilde{t} \in (0,T)$ such that $|\varphi^{t_{0}+\widetilde{t}}(w_{0})-v_{0}| \geq r/2$. Since $u_{0} \in \omega(w_{0})$ there must be a time $t' \geq t_{\delta}$ when $\Phi(\Pi \varphi^{t'}(w_{0})) \notin \mathcal{C}$. Thus, for $t \geq t'$ the trajectory $\varphi^{t}(w_{0})$ cannot remain close to $v_{0}$ for the time intervals larger than $T$. This contradicts the fact that $v_{0} \in \omega(w_{0})$ and $v_{0}$ is stationary. So, $v_{0}$ is the only stationary point in $\omega(w_{0})$ and the lemma is proved.
\end{proof}
\section{Orbital stability}
\label{SEC: OrbitalStability}
In this section we also suppose that the hypotheses of Theorem \ref{TH: PBTrichotomy} are satisfied.

Recall that a periodic orbit $\gamma_{0}$ is called \textit{orbitally stable} if for every $\varepsilon>0$ there exists $\delta>0$ such that $\varphi^{t}(v) \in \mathcal{O}_{\varepsilon}(\gamma_{0})$ for all $t \geq 0$ provided that $v \in \mathcal{O}_{\delta}(\gamma_{0})$. In our context to study orbital stability of periodic orbits it is convenient to introduce the following definition.

A periodic orbit $\gamma_{0}$ is called \textit{amenable stable} if it is orbitally stable as a periodic orbit of the flow $\varphi$ restricted to $\mathfrak{A}$, i.~e. for every $\varepsilon>0$ there exists $\delta>0$ such that $\varphi^{t}(v) \in \mathcal{O}_{\varepsilon}(\gamma_{0})$ for all $t \geq 0$ provided that $v \in \mathcal{O}_{\delta}(\gamma_{0}) \cap \mathfrak{A}$.

The following lemma is a generalization of Theorem 3 from \cite{Smith1992}.

\begin{lemma}
	\label{LEM: AmenStOrbSt}
	Suppose a periodic orbit $\gamma_{0}$ is amenable stable; then it is orbitally stable.
\end{lemma}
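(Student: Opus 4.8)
The plan is to argue by contradiction, reducing the desired stability in $\mathbb{E}$ to the assumed stability of $\gamma_{0}$ inside $\mathfrak{A}$ by exploiting the attracting property \textbf{(A3)}. Suppose $\gamma_{0}$ is amenable stable but not orbitally stable. Then there are $\varepsilon_{0}>0$, points $w_{n}\in\mathbb{E}$ with $\operatorname{dist}(w_{n},\gamma_{0})\to 0$, and first-exit times $s_{n}>0$ such that $\operatorname{dist}(\varphi^{s_{n}}(w_{n}),\gamma_{0})=\varepsilon_{0}$ while $\operatorname{dist}(\varphi^{t}(w_{n}),\gamma_{0})\le\varepsilon_{0}$ for all $t\in[0,s_{n}]$. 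Passing to a subsequence with $w_{n}\to p_{0}\in\gamma_{0}$ (here the compactness of $\gamma_{0}$ is used), the joint continuity of $\varphi$ gives $\varphi^{t}(w_{n})\to\varphi^{t}(p_{0})\in\gamma_{0}$ uniformly on each interval $[0,T]$; hence a bounded subsequence of $s_{n}$ is impossible and $s_{n}\to+\infty$. Since these trajectory arcs stay in the bounded set $\overline{\mathcal{O}_{\varepsilon_{0}}(\gamma_{0})}$, the compactness hypothesis \textbf{(COM)} places all $\varphi^{t}(w_{n})$ with $t\ge\tau_{com}$, together with their shadows $a_{n}(t):=\Phi(\Pi\varphi^{t}(w_{n}))\in\mathfrak{A}$, in one fixed compact set $\mathcal{K}$, exactly as in the proof of Lemma~\ref{LEM: BendixsonLemma}; this yields uniform continuity of $\varphi$ on $[0,T]\times\mathcal{K}$ for any fixed $T$.

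The first substantive step is to pass from $\mathbb{E}$ to the planar flow $\xi$ on $\mathfrak{A}$. I would upgrade the pointwise convergence in \textbf{(A3)} to a statement uniform over the family $\{w_{n}\}$: there is a time $t_{\ast}>0$, independent of $n$, with $\|\varphi^{t}(w_{n})-a_{n}(t)\|_{\mathbb{E}}<\varepsilon_{0}/4$ for all $t\ge t_{\ast}$ and all large $n$. The proof of \textbf{(A3)} rests on \textbf{(H3)} and \textbf{(S)} (the squeezing inequality controls $\int e^{2\nu s}|\varphi^{s}(w_{n})-a_{n}(s)|^{2}_{\mathbb{H}}\,ds$ and the smoothing estimate \textbf{(S)} converts this $\mathbb{H}$-smallness into $\mathbb{E}$-smallness after the lag $\tau_{S}$), and because everything takes place in the compact set $\mathcal{K}$ this argument can be run uniformly in $n$ by a standard compactness contradiction. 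With this uniform transverse bound, for $t\ge t_{\ast}$ we have $\operatorname{dist}(\varphi^{t}(w_{n}),\gamma_{0})\le\varepsilon_{0}/4+\operatorname{dist}(a_{n}(t),\gamma_{0})$, so it suffices to keep the shadow $a_{n}(t)$ within $\varepsilon_{0}/2$ of $\gamma_{0}$ on $[t_{\ast},s_{n}]$.

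Now I would invoke amenable stability. Since $\xi$ is a flow on the two-dimensional $\mathfrak{A}$ and $\gamma_{0}$ is orbitally stable for $\xi$, for the radius $\varepsilon_{0}/2$ one obtains a neighbourhood of $\gamma_{0}$ in $\mathfrak{A}$ whose $\xi$-forward orbit remains in $\mathcal{O}_{\varepsilon_{0}/2}(\gamma_{0})$; taking the forward-saturation and its interior, this neighbourhood may be assumed positively invariant under $\xi$ (here $\xi^{t}$ is a homeomorphism of $\mathfrak{A}$). The delicate point, which I expect to be the main obstacle, is that $a_{n}(\cdot)$ is only the $V$-orthogonal shadow of $\varphi^{t}(w_{n})$ and is \emph{not} itself a $\xi$-trajectory, so amenable stability cannot be applied to it verbatim; moreover one cannot replace it by a single genuine trajectory $\varphi^{t}(\Phi(\Pi w_{n}))$, because the indefiniteness of $V$ together with the phase drift along $\gamma_{0}$ prevents any fixed trajectory from staying close to $a_{n}(\cdot)$. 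The way around this is to compare $a_{n}(\cdot)$ with genuine manifold trajectories over windows of a fixed length $T$ (for instance the period of $\gamma_{0}$): once the transverse deviation is below a threshold, the continuity of $\Pi$, $\Phi$ and the uniform continuity of $\varphi$ on $\mathcal{K}$ force $t\mapsto a_{n}(t)$ to remain in an arbitrarily thin tube around the $\xi$-orbit of $a_{n}(t_{0})$ on $[t_{0},t_{0}+T]$; feeding this into the positively invariant trapping neighbourhood and iterating over consecutive windows confines $a_{n}(t)$ to $\mathcal{O}_{\varepsilon_{0}/2}(\gamma_{0})$ for all $t\ge t_{\ast}$.

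Combining the two estimates gives $\operatorname{dist}(\varphi^{s_{n}}(w_{n}),\gamma_{0})<\varepsilon_{0}/4+\varepsilon_{0}/2<\varepsilon_{0}$ for all large $n$, contradicting the defining property of the first-exit times $s_{n}$. The care required in the window-and-trapping step, and in extracting the uniform-in-$n$ transverse bound from \textbf{(A3)}, \textbf{(H3)} and \textbf{(S)}, is the crux of the argument; the remaining bookkeeping with the compact set $\mathcal{K}$ and the projector $\Pi$ is routine and parallels the proof of Lemma~\ref{LEM: BendixsonLemma}.
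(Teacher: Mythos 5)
Your reduction to the manifold via \textbf{(A3)} and the per-window ``thin tube'' estimate are both sound (and the uniform-in-$n$ version of \textbf{(A3)} can indeed be obtained by a compactness contradiction, since all the orbits and their shadows live in one compact set). The genuine gap is exactly at the step you flag as delicate: the iteration over consecutive windows does not close. Amenable stability is only Lyapunov stability on $\mathfrak{A}$: if $a_{n}(t_{0})\in\mathcal{O}_{\delta}(\gamma_{0})\cap\mathfrak{A}$, the genuine reference trajectory $\xi^{t-t_{0}}(a_{n}(t_{0}))$ is guaranteed to stay in $\mathcal{O}_{\varepsilon_{0}/2}(\gamma_{0})$, but it is \emph{not} guaranteed to return into $\mathcal{O}_{\delta-\eta'}(\gamma_{0})$ at time $t_{0}+T$; hence at the end of the window you only know $a_{n}(t_{0}+T)\in\mathcal{O}_{\varepsilon_{0}/2+\eta'}(\gamma_{0})$, which is far too weak to restart the argument with the same $\delta$. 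The drift $\eta'$ is therefore not absorbed but accumulates, and the number of windows $(s_{n}-t_{*})/T\to\infty$. There is no contraction mechanism available: $\gamma_{0}$ need not be asymptotically stable inside $\mathfrak{A}$ (it may, e.g., be surrounded by other periodic orbits), and \textbf{(A3)} carries no decay rate, so the per-window drifts are not summable. Your scheme would work if the manifold had the exponential tracking property, but the paper explicitly avoids assuming the strengthened form of \textbf{(S)} that yields it, and under the stated hypotheses the purely topological trapping argument cannot be completed.

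The paper closes this gap with tools your proposal never invokes. Instead of shadowing, it time-shifts the escaping orbits to their exit times and uses \textbf{(COM)} to extract a genuine limit \emph{complete} trajectory $v_{\varepsilon}(\cdot)$ lying on $\mathfrak{A}$, with $\operatorname{dist}(v_{\varepsilon}(t),\gamma_{0})\le\varepsilon$ for $t\le 0$ and $=\varepsilon$ at $t=0$. Corollary \ref{COR: Th1CorAlphaLimit} (the $\alpha$-limit trichotomy) then forces $\alpha(v_{\varepsilon}(0))$ to be a periodic orbit, and amenable stability forces it to differ from $\gamma_{0}$; thus $\gamma_{0}$ is non-isolated, and one finds a periodic orbit $\gamma_{1}$ separating $\gamma_{0}$ from $v_{\varepsilon_{2}}(0)$ on $\mathfrak{A}$. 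The contradiction then comes from Lemma \ref{LEM: PeriodicSeparateLemma}: the projection $\Pi\varphi^{t}(w)$ of any orbit starting sufficiently close to $\gamma_{0}$ can \emph{never} cross $\Pi\gamma_{1}$, yet the escaping orbits must cross it. This cone-based separation lemma, resting on \textbf{(H3)} and Lemma \ref{LEM: AmenableLemma}, is precisely the ``barrier'' that substitutes for the missing contraction in your iteration; without it (or exponential tracking), the window-and-trapping argument cannot be repaired.
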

\begin{proof}
	We will obtain a contradiction by assuming that the amenable stable periodic orbit $\gamma_{0}$ is not orbitally stable. Let $\delta_{k}>0$, $k=1,2,\ldots$, be a sequence tending to zero. Then for all sufficiently small $\varepsilon>0$ there exists a point $w^{(k)}_{\varepsilon} \in \mathcal{O}_{\delta_{k}}(\gamma_{0})$ and a moment of time $t_{k}>0$ such that $w^{(k)}_{\varepsilon}(t):=\varphi^{t}(w^{(k)}_{\varepsilon}) \in \mathcal{O}_{\varepsilon}(\gamma_{0})$ for all $t \in [0,t_{k})$ and $\operatorname{dist}(w^{(k)}_{\varepsilon}(t_{k}),\gamma_{0})=\varepsilon$. Since $\delta_{k} \to 0$, we must have $t_{k} \to +\infty$ as $k \to \infty$. Put $v^{(k)}_{\varepsilon}(t):=w^{(k)}_{\varepsilon}(t+t_{k})$ for $t \geq -t_{k}$. Using the boundedness of $v^{(k)}_{\varepsilon}(t)$ for $t \in (-\infty,0]$ and \textbf{(COM)} we can obtain a subsequence (we keep the same index $k$), which converges to some amenable trajectory $v_{\varepsilon}(\cdot)$ as $k \to +\infty$ (see Lemma 4 in \cite{Anikushin2020Red}). For $v_{\varepsilon}(\cdot)$ we have the properties
	\begin{enumerate}
		\item[(\textbf{$\ast$})] $\operatorname{dist}(v_{\varepsilon}(t),\gamma_{0}) \leq \varepsilon$ for all $t \in (-\infty,0)$;
		\item[(\textbf{$\ast\ast$})] $\operatorname{dist}(v_{\varepsilon}(0),\gamma_{0}) = \varepsilon$.
	\end{enumerate}
    If $\varepsilon$ is sufficiently small then the closure of $\mathcal{O}_{\varepsilon}(\gamma_{0})$ does not contain stationary points. From this, (\textbf{$\ast$}) and Corollary \ref{COR: Th1CorAlphaLimit} it follows that $\alpha(v_{\varepsilon}(0))$ must be a periodic trajectory $\gamma_{\varepsilon}$. Moreover, since $\gamma_{0}$ is amenable stable and (\textbf{$\ast\ast$}) holds, we must have $\gamma_{\varepsilon} \not= \gamma_{0}$. Thus, $\gamma_{0}$ is a non-isolated periodic orbit. From this it follows that for some $\varepsilon_{2}>\varepsilon_{1}>0$ and corresponding orbits $\gamma_{1}:=\gamma_{\varepsilon_{1}}$ and $\gamma_{2}:=\gamma_{\varepsilon_{2}}$ we have the property that $\gamma_{1}$ separates $\gamma_{0}$ and $\gamma_{2}$ on $\mathfrak{A}$. Let $\delta>0$ be given by Lemma \ref{LEM: PeriodicSeparateLemma} applied to $\gamma_{0}$ and $\gamma_{1}$, i.~e. if $v \in \mathcal{O}_{\delta}(\gamma_{0})$ then $\Pi \varphi^{t}(v) \not\in \Pi \gamma_{1}$ for all $t \geq 0$. From (\textbf{$\ast\ast$}) it is clear that $\gamma_{1}$ separates $v_{\varepsilon_{2}}(0)$ and $\gamma_{0}$ and, consequently, $\gamma_{1}$ separates $v^{(k)}_{\varepsilon_{2}}(0)$ and $\gamma_{0}$ for all sufficiently large $k$. Since $v^{(k)}_{\varepsilon_{2}}(-t_{k}) \in \mathcal{O}_{\delta_{k}}(\gamma_{0})$, $\gamma_{1}$ separates $v^{(k)}_{\varepsilon_{2}}(0)$ and $v^{(k)}_{\varepsilon_{2}}(-t_{k})$ for all sufficiently large $k$ and as a consequence there must by a time $t^{(k)}_{0} \in (-t_{k}, 0)$ such that $\Pi v^{(k)}_{\varepsilon_{2}}(t^{(k)}_{0}) \in \Pi \gamma_{1}$. This leads to a contradiction if $k$ is also chosen such that $\delta_{k} < \delta$.
\end{proof}

For a periodic orbit $\gamma$ in $\mathfrak{A}$ by $\mathcal{G}_{\gamma}$ we denote its interior (i.~e. the bounded component of $\mathfrak{A} \setminus \gamma$), which is well-defined by the Jordan curve theorem. Let $\gamma_{1}$ and $\gamma_{2}$ be two periodic orbits in $\mathfrak{A}$ we write $\gamma_{1} \leqslant \gamma_{2}$ iff $\mathcal{G}_{\gamma_{1}} \subset \mathcal{G}_{\gamma_{2}}$. Clearly, the relation $\leqslant$ defines a partial order on the set of periodic orbits. 

To describe the amenable stability the following concepts is useful. A periodic orbit $\gamma$ is called \textit{externally} (resp. \textit{internally}) \textit{stable} if either $\gamma$ is a limit of periodic orbits $\gamma_{k} \not=\gamma$, $k=1,2,\ldots$, with $\gamma \leq \gamma_{k}$ (resp. $\gamma_{k} \leq \gamma$) or there is a point $v_{0} \in \mathfrak{A} \setminus \operatorname{Cl}\mathcal{G}_{\gamma}$ (resp. $v_{0} \in \mathcal{G}_{\gamma}$) with $\omega(v_{0}) = \gamma$. The following lemma is obvious.

\begin{lemma}
	\label{LEM: ExtIntStable}
	A periodic orbit $\gamma$ which is both externally and internally stable is amenable stable.
\end{lemma}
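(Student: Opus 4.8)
The plan is to carry out the entire argument on the invariant manifold $\mathfrak{A}$, which by Corollary \ref{COR: AmenableManifoldProp} carries a genuine flow $\xi$ and, being homeomorphic to $\mathbb{E}^{-}$ (a topological plane for $j=2$), is a topological plane. Thus $\gamma$ is a Jordan curve separating $\mathfrak{A}$ into its bounded interior $\mathcal{G}_{\gamma}$ and its unbounded exterior $\mathfrak{A} \setminus \operatorname{Cl}\mathcal{G}_{\gamma}$. Amenable stability is exactly orbital stability of $\gamma$ for $\xi$, so it suffices to produce, for every $\varepsilon > 0$, a positively invariant neighbourhood of $\gamma$ in $\mathfrak{A}$ contained in $\mathcal{O}_{\varepsilon}(\gamma)$. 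I would build such a neighbourhood as the union of $\gamma$ with two one-sided positively invariant annular regions, an exterior one supplied by external stability and an interior one supplied by internal stability, each lying in $\mathcal{O}_{\varepsilon}(\gamma)$. The structural fact used throughout is that, by uniqueness of the flow $\xi$ on the plane, no trajectory can cross the periodic orbit $\gamma$ or any other periodic orbit.

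First I would exploit external stability, which by definition offers two alternatives. If $\gamma$ is a limit of exterior periodic orbits $\gamma_{k} \ne \gamma$ with $\gamma \leqslant \gamma_{k}$, then the open annulus $\mathcal{G}_{\gamma_{k}} \setminus \operatorname{Cl}\mathcal{G}_{\gamma}$ is positively invariant, since a trajectory starting in it can cross neither of the periodic orbits $\gamma$ and $\gamma_{k}$. A short compactness argument (the nested closed regions $\operatorname{Cl}\mathcal{G}_{\gamma_{k}}$ decrease to $\operatorname{Cl}\mathcal{G}_{\gamma}$) shows these annuli shrink to $\gamma$, so for large $k$ the annulus lies in $\mathcal{O}_{\varepsilon}(\gamma)$ and is the desired exterior trapping region. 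If instead there is an exterior point $v_{0}$ with $\omega(v_{0}) = \gamma$, I would fix a transversal $\mathcal{T}$ at a point of $\gamma$ via Theorem \ref{TH: HajekTheorem}; by the standard monotonicity of successive crossings of a transversal (a consequence of the Jordan curve theorem and the non-crossing of trajectories, cf.\ the proof of Lemma \ref{LEM: BendixsonLemma}) the forward orbit of $v_{0}$ meets $\mathcal{T}$ in points $p_{n}$ that advance monotonically toward $\gamma \cap \mathcal{T}$. The Jordan curve formed by the orbit arc joining $p_{n}$ and $p_{n+1}$ together with the segment of $\mathcal{T}$ between them bounds, with $\gamma$ as its inner boundary, an annular region $A_{n}$; the monotonicity fixes the crossing direction along this transversal segment so that $A_{n}$ is positively invariant (the classical Bendixson trapping region), and the regions $A_{n}$ shrink to $\gamma$ as $n \to \infty$.

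The treatment of internal stability is completely symmetric and produces a positively invariant interior annular region inside $\mathcal{O}_{\varepsilon}(\gamma)$, using either interior nested periodic orbits or an interior point $v_{0} \in \mathcal{G}_{\gamma}$ with $\omega(v_{0}) = \gamma$. Finally I would glue the exterior and interior annuli along $\gamma$: given $\varepsilon > 0$, choose $\delta > 0$ so small that the exterior and interior $\delta$-neighbourhoods of $\gamma$ lie inside the two annuli just constructed. Any $v \in \mathcal{O}_{\delta}(\gamma) \cap \mathfrak{A}$ then lies on $\gamma$ (and stays on it), in the exterior annulus, or in the interior annulus, and in every case its forward $\xi$-orbit remains in $\mathcal{O}_{\varepsilon}(\gamma)$. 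This gives orbital stability of $\gamma$ on $\mathfrak{A}$, i.e.\ amenable stability.

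I expect the only genuine work to be in the $\omega$-limit alternative: reading off the correct crossing direction on $\mathcal{T}$ from the monotonicity in order to confirm that the spiral-plus-transversal region $A_{n}$ is positively invariant and forms a full one-sided neighbourhood of $\gamma$, and checking that these regions shrink to $\gamma$. The periodic-orbit alternative and the gluing step are routine consequences of the planar Jordan-curve topology of $\mathfrak{A}$ and the non-crossing of trajectories, which is presumably why the author regards the lemma as obvious.
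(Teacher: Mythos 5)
The paper offers no proof to compare against: it introduces external/internal stability and then states this lemma with the single sentence ``The following lemma is obvious.'' Your argument is therefore an elaboration rather than an alternative route, and it is the natural one: reduce amenable stability to orbital stability of $\gamma$ for the flow $\xi$ on the plane $\mathfrak{A}$, and build one-sided positively invariant trapping regions from each of the two alternatives in the definitions, then glue. The structure is sound. In the periodic-orbit alternative, note that the orbits $\gamma_{k}$ with $\gamma \leqslant \gamma_{k}$ are automatically totally ordered (disjoint Jordan curves whose interiors all contain $\mathcal{G}_{\gamma}$ are nested), the open region between $\gamma$ and $\gamma_{k}$ is invariant because $\xi$ is a genuine flow on $\mathfrak{A}$, and your compactness argument for shrinking does work: if points $p_{k}$ of the annuli stayed at distance $\geq \varepsilon$ from $\gamma$, a limit point $p$ would lie in the exterior of $\gamma$, and a path from $p$ to infinity clearing $\gamma$ by some $\eta>0$ would eventually avoid $\gamma_{k}$ as well, putting $p_{k}$ in the exterior of $\gamma_{k}$ --- a contradiction. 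The one-sided-neighbourhood property you need for the gluing step also comes cheaply: $\gamma$ is a compact subset of the open region bounded by the trapping curve, so all exterior (resp.\ interior) points sufficiently close to $\gamma$ lie in the trapping region.

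The only place where your sketch leans on something that is not routine is the one you yourself flag: in the $\omega$-limit alternative, the claim that the Jordan curve $\Gamma_{n}$ (orbit arc from $p_{n}$ to $p_{n+1}$ plus transversal segment) bounds a region having $\gamma$ ``as its inner boundary,'' i.e.\ that $\Gamma_{n}$ actually encircles $\gamma$, and that these sacks shrink. Positive invariance and the fact that $\gamma$ lies in the component of $\mathfrak{A}\setminus\Gamma_{n}$ entered through the transversal segment are easy; but a winding-number-zero lap (one that goes partway around $\gamma$ and returns) would produce a sack whose $\gamma$-side component is \emph{unbounded}, and then no shrinking statement holds. This possibility has to be excluded: e.g.\ after normalizing $\gamma$ to a round circle by Schoenflies, one checks that if no lap encircled $\gamma$ then the Jordan interiors $\operatorname{Int}\Gamma_{n}$ would form an increasing sequence of nonempty open sets trapped in annular collars of widths $\eta_{n}\to 0$, which is absurd; the shrinking of the encircling sacks then follows by the same compactness argument as in the periodic-orbit case. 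Equivalently one can invoke H\'{a}jek's theory of limit cycles for continuous planar flows \cite{Hajek1968}, which is presumably what the author has in mind in calling the lemma obvious. With that step either carried out or cited, your proof is complete and correct.
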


Suppose $\mathcal{A}$ is an attractor satisfying the conditions of Theorem \ref{TH: PeriodicOrbitExis}. Note that for any of its neighborhood $\mathcal{U}$ we have that $\mathcal{U} \cap \mathfrak{A}$ is an open subset of $\mathfrak{A}$. 

Let $\operatorname{Per}(\mathcal{A})$ denote the set of all periodic points in $\mathcal{A}$. We have the following lemma.
\begin{lemma}
	\label{LEM: PeriodicSetClosed}
	The set $\operatorname{Per}(\mathcal{A})$ is non-empty and closed.
\end{lemma}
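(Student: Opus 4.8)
The plan is to prove both non-emptiness and closedness of $\operatorname{Per}(\mathcal{A})$ by exploiting the two-dimensional structure of $\mathfrak{A}$ together with the trichotomy already established. First, I would observe that $\mathcal{A} \cap \mathfrak{A}$ inherits a flow (via \textbf{(A2)}) on the two-dimensional manifold $\mathfrak{A}$, and that the hypothesis on $\mathcal{A}$ (no stationary points, or only unstable terminal ones) is what rules out the degenerate cases of the Poincar\'{e}-Bendixson trichotomy. For non-emptiness, take any point $v_{0}$ in a neighborhood $\mathcal{U}$ of $\mathcal{A}$; its positive semi-orbit is compact and $\omega(v_{0}) \subset \mathcal{A}$. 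By Theorem \ref{TH: PBTrichotomy} the set $\omega(v_{0})$ is a stationary point, a periodic orbit, or a union of stationary points with connecting orbits (\textbf{(T3)}). If I can arrange that $v_{0} \in \mathfrak{A}$, then $\omega(v_{0})$ lies entirely in $\mathfrak{A} \cap \mathcal{A}$, and the stationary alternatives \textbf{(T1)} and \textbf{(T3)} are excluded: the assumption says any stationary point of $\mathcal{A}$ is an \emph{unstable terminal point}, and Corollary \ref{COR: TerminalPoints} forces $\omega(v_{0}) = \{v_{0}\}$ whenever such a point lies in $\omega(v_{0})$ — which would make $v_{0}$ itself stationary, contradicting the instability captured by the $2$-dimensional unstable set in \textbf{(U3)}. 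Thus the only surviving alternative is \textbf{(T2)}, a periodic orbit, and this orbit lies in $\mathcal{A}$, giving $\operatorname{Per}(\mathcal{A}) \neq \emptyset$.

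For closedness, I would take a sequence of periodic points $v_{k} \in \operatorname{Per}(\mathcal{A})$ converging to some $v_{\infty} \in \mathcal{A}$ (the set $\mathcal{A}$ is closed by definition), and show $v_{\infty}$ is periodic. The natural route is to show $v_{\infty} \in \mathfrak{A}$ and then apply the trichotomy to $\omega(v_{\infty})$. Since each $v_{k}$ is periodic it lies on a complete bounded trajectory, hence $v_{k} \in \mathfrak{A}$; as $\mathfrak{A}$ need not be closed in $\mathbb{E}$, I would instead argue that $v_{\infty}$ has a compact positive semi-orbit with $\omega(v_{\infty}) \subset \mathcal{A}$, so the trichotomy applies to $\omega(v_{\infty})$. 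The stationary alternatives are again excluded exactly as above (terminal-point assumption plus Corollary \ref{COR: TerminalPoints}), so $\omega(v_{\infty})$ is a periodic orbit $\gamma_{\infty}$. It then remains to identify $v_{\infty}$ itself as a periodic point rather than a point attracted to $\gamma_{\infty}$: here the attracting property \textbf{(A3)} places $v_{\infty}$ asymptotically on $\mathfrak{A}$, and on the two-dimensional flow $\xi$ I can use the Bendixson-type control from Lemma \ref{LEM: BendixsonLemma} to show that the limit of periodic points crossing a common transversal must itself return, hence be periodic.

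The main obstacle I anticipate is the closedness half, specifically controlling the limit on the two-dimensional manifold. The periods $T_{k}$ of the orbits $\gamma(v_{k})$ could in principle tend to infinity, which is precisely what happens when a sequence of periodic orbits accumulates onto a homoclinic or heteroclinic configuration. The terminal-point hypothesis is designed to forbid exactly such degenerations, but translating this into a uniform bound (or a suitable compactness argument for the limiting trajectory) is the delicate point. I would handle it by setting up a transversal $\mathcal{T}$ to $\xi$ at a non-stationary limit point and invoking Lemma \ref{LEM: BendixsonLemma} to force all nearby periodic trajectories to cross $\mathcal{T}$ at a single recurring point; the separation property of Lemma \ref{LEM: PeriodicSeparateLemma} then prevents the $\gamma(v_{k})$ from drifting across one another, pinning down a finite return time for the limit and yielding periodicity of $v_{\infty}$. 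The compactness assumption \textbf{(COM)} (through the extraction of a limiting amenable trajectory as in Lemma 4 of \cite{Anikushin2020Red}) is what makes this extraction legitimate in the infinite-dimensional setting.
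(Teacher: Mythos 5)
Your overall strategy is the same as the paper's (trichotomy plus the terminal-point hypothesis for non-emptiness; limit point on $\mathfrak{A}$, periodic $\omega$-limit, then a transversal argument for closedness), but both halves have genuine gaps at exactly the steps that carry the content.

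In the non-emptiness half, your exclusion of the stationary alternatives is a non-sequitur. Corollary \ref{COR: TerminalPoints}, applied to your point $v_{0}$, yields $\omega(v_{0})=\{p\}$ where $p$ is the terminal point contained in $\omega(v_{0})$ --- not $\omega(v_{0})=\{v_{0}\}$ --- and this does not make $v_{0}$ stationary; it just says the orbit of $v_{0}$ converges to $p$, which is precisely the case still to be ruled out. The missing argument (which the paper states as the property that unstable terminal points ``cannot lie in $\omega$-limit sets of other amenable trajectories'') runs as follows: the $2$-dimensional unstable set $\mathcal{D}$ of $p$ lies in $\mathfrak{A}$ and, by invariance of domain, is an open neighborhood of $p$ in $\mathfrak{A}$; if the forward orbit of a point $v_{0}\in\mathfrak{A}$, $v_{0}\neq p$, converged to $p$, it would enter $\mathcal{D}$ within any $\delta$ of $p$, and then \textbf{(U3)}, combined with backward uniqueness of trajectories on $\mathfrak{A}$ (so that the backward trajectory of $\varphi^{T}(v_{0})$ passes through $v_{0}$), would force $\|v_{0}-p\|_{\mathbb{E}}<\varepsilon$ for $\varepsilon<\|v_{0}-p\|_{\mathbb{E}}$, a contradiction. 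Note this requires $v_{0}\in\mathfrak{A}$, and you also left the existence of a non-stationary $v_{0}\in\mathcal{U}\cap\mathfrak{A}$ as ``if I can arrange'': it follows because $\omega$-limit sets of points of $\mathcal{U}$ consist of bounded complete (hence amenable) trajectories, so $\mathcal{A}\cap\mathfrak{A}\neq\emptyset$, and each unstable terminal point is isolated among stationary points (its set $\mathcal{D}$ contains no other equilibria by \textbf{(U2)}).

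In the closedness half, your anticipated obstacle --- periods $T_{k}\to\infty$ --- is a red herring: the paper's proof never bounds periods, and ``pinning down a finite return time'' is not the mechanism. The paper first secures $v_{\infty}\in\mathfrak{A}$ outright (the complete orbits of the $v_{k}$ lie in the bounded set $\mathcal{A}$, so \textbf{(COM)} lets one extract a limiting bounded, hence amenable, complete trajectory through $v_{\infty}$), so the whole argument lives on the $2$-manifold. Then $\omega(v_{\infty})$ is a periodic orbit $\gamma_{0}$, the forward orbit of $v_{\infty}$ crosses a transversal $\mathcal{T}$ at $\overline{v}\in\gamma_{0}$ at arbitrarily large times, and the key step is: if two such crossings were distinct, then --- since every point $\varphi^{t}(v_{\infty})$ is itself a limit of the periodic points $\varphi^{t}(v_{k})$ --- one can choose a periodic orbit $\widetilde{\gamma}\neq\gamma_{0}$ which separates on $\mathfrak{A}$ the two crossing points (or one of them from $\gamma_{0}$); connectedness forces $\Pi\varphi^{t}(v_{\infty})$ to hit $\Pi\widetilde{\gamma}$, while Lemma \ref{LEM: PeriodicSeparateLemma}, applied to $\gamma_{0}$ and $\widetilde{\gamma}$ using that the orbit is eventually $\delta$-close to $\gamma_{0}$, forbids exactly this. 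Hence all crossings coincide and $v_{\infty}$ is periodic. Your sketch names the right tools (transversal, Lemma \ref{LEM: BendixsonLemma}, Lemma \ref{LEM: PeriodicSeparateLemma}), but the separating-periodic-orbit construction --- the actual content of the step --- is absent; also note that Lemma \ref{LEM: BendixsonLemma} cannot be applied to the trajectory of $v_{\infty}$ itself, since that lemma only concerns trajectories lying inside an $\omega$-limit set.
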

\begin{proof}
	Let $\mathcal{U}_{\mathcal{A}}$ be any neighbourhood of the attractor $\mathcal{A}$. Since unstable terminal points are separated from each other and cannot lie in $\omega$-limit sets of other amenable trajectories, there exist non-stationary points in $\mathcal{U}_{\mathcal{A}} \cap \mathfrak{A}$, which must lie in $\mathcal{A}$ and, consequently, due to Theorem \ref{TH: PBTrichotomy} must be attracted by some periodic trajectories. Thus, the set $\operatorname{Per}(\mathcal{A})$ is not empty.
	
	Now suppose $v_{0} \in \mathcal{A}$ is a limit of a sequence $v_{k} \in \operatorname{Per}(\mathcal{A})$, $k=1,2,\ldots$, of periodic points. Since $v_{0} \in \mathfrak{A}$, by the same arguments as above, from Theorem \ref{TH: PBTrichotomy} for some periodic trajectory $\gamma_{0}$ we must have that $\omega(v_{0})=\gamma_{0}$ and $\gamma_{0} \subset \operatorname{Per}(\mathcal{A})$. Let $\overline{v} \in \gamma_{0}$ be any point and for some sufficiently small $\varepsilon_{0}>0$ let $\mathcal{T} \subset \mathfrak{A}$ be an $\varepsilon_{0}$-transversal of $\varphi$ restricted to $\mathfrak{A}$ at $\overline{v}$. For every $t \in \mathbb{R}$ such that $\varphi^{t}(v_{0})$ belongs to the topological neighborhood $\mathcal{U}=\mathcal{U}(\varepsilon_{0},\mathcal{T})$ of $\overline{v}$ we must have an intersection with $\mathcal{T}$ for some $t_{0} \in (t-\varepsilon_{0},t+\varepsilon_{0})$. Therefore, there are infinitely many and arbitrary large times $t_{0}$ such that $\varphi^{t_{0}}(v_{0})$ belongs to $\mathcal{T}$. Let $\sigma>0$ be a period of $\gamma_{0}$ and consider $\varepsilon>0$ such that $\mathcal{O}_{\varepsilon}(\overline{v}) \cap \mathfrak{A} \subset \mathcal{U}$. Let $\delta>0$ be such that $\|\varphi^{t}(v)-\varphi^{t}(\overline{v})\|_{\mathbb{E}} < \varepsilon$ for $t \in [0,\sigma]$ provided that $v \in \mathcal{O}_{\delta}(\overline{v}) \cap \mathfrak{A}$. Suppose that for some times $0< t_{1} < t_{2}$ with $\varphi^{t_{1}}(v_{0}) \in \mathcal{O}_{\delta}(\overline{v})$ and $\varphi^{t_{2}}(v_{0}) \in \mathcal{O}_{\delta}(\overline{v})$ we have two distinct intersections with $\mathcal{T}$. Since $\varphi^{t_{2}}(v_{0}) \in \operatorname{Cl}\operatorname{Per}(\mathcal{A})$, a periodic orbit $\widetilde{\gamma}$ can be chosen such that $\gamma_{0} \leq \widetilde{\gamma}$ (or $\widetilde{\gamma} \leq \gamma_{0}$) and either $\widetilde{\gamma}$ separates $\varphi^{t_{1}}(v_{0})$ and $\varphi^{t_{2}}(v_{0})$ or $\widetilde{\gamma}$ separates $\varphi^{t_{2}}(v_{0})$ (or $\varphi^{t_{1}}(v_{0})$) and $\gamma_{0}$. In both cases we derive a contradiction. Therefore, $\varphi^{t}(v_{0})$ intersects $\mathcal{T}$ at the same point and consequently, $v_{0}$ is a periodic point.
\end{proof}

Let $\Gamma(\mathcal{A})$ denote the set of all periodic orbits in $\mathcal{A}$. Above we have defined a partial order $\leqslant$ on the set $\Gamma(\mathcal{A})$.
\begin{lemma}
	\label{LEM: PeriodicChain}
	Every chain $\mathcal{C}$ in $\Gamma$ has an upper bound $\gamma^{+}$ and a lower bound $\gamma^{-}$.
\end{lemma}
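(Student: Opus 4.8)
The plan is to realize $\gamma^{+}$ and $\gamma^{-}$ as limiting periodic orbits of the chain, constructed inside the amenable manifold $\mathfrak{A}$ and extracted by the Poincar\'e--Bendixson trichotomy. Throughout I work in the projected picture: by Theorem \ref{TH: MainReductionTheorem} the map $\Pi$ identifies $\mathfrak{A}$ with $\mathbb{E}^{-}=\mathbb{R}^{2}$, every periodic orbit $\gamma\in\mathcal{C}$ becomes a Jordan curve with bounded interior $\mathcal{G}_{\gamma}$, distinct orbits are disjoint, and since all of them lie in the bounded set $\mathcal{A}$ their interiors are contained in one fixed ball. As $\mathcal{C}$ is totally ordered by $\leqslant$, the family $\{\mathcal{G}_{\gamma}\}_{\gamma\in\mathcal{C}}$ is totally ordered by inclusion.

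For the lower bound I set $K^{-}:=\bigcap_{\gamma\in\mathcal{C}}\operatorname{Cl}\mathcal{G}_{\gamma}$, a nested intersection of compact connected invariant sets, hence compact, connected, nonempty and invariant. If $\mathcal{C}$ has a least element it already serves as $\gamma^{-}$; otherwise I pick $p\in\partial K^{-}$. A short boundary argument shows $\partial K^{-}\subset\operatorname{Cl}\bigcup_{\gamma}\gamma\subset\mathcal{A}$, so $p\in\mathcal{A}$, its positive semiorbit is compact by \textbf{(COM)}, and by Theorem \ref{TH: PBTrichotomy} together with the hypothesis on stationary points (an unstable terminal point cannot be an $\omega$-limit, as in the proof of Lemma \ref{LEM: PeriodicSetClosed}) the set $\gamma^{-}:=\omega(p)$ is a periodic orbit in $\mathcal{A}$. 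Since $\gamma^{-}\subset K^{-}\subset\operatorname{Cl}\mathcal{G}_{\gamma}=\mathcal{G}_{\gamma}\sqcup\gamma$ and $\gamma^{-}$ is disjoint from $\gamma$ whenever they are distinct, the Jordan curve $\gamma^{-}$ must sit in the open disk $\mathcal{G}_{\gamma}$, whence $\mathcal{G}_{\gamma^{-}}\subset\mathcal{G}_{\gamma}$, i.e. $\gamma^{-}\leqslant\gamma$, for every $\gamma\in\mathcal{C}$. Thus $\gamma^{-}$ is the desired lower bound.

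For the upper bound I dualize and set $G^{+}:=\bigcup_{\gamma\in\mathcal{C}}\mathcal{G}_{\gamma}$, an open, connected, bounded and invariant increasing union of Jordan disks, hence simply connected with connected complement on the sphere. Exactly as above, every boundary point of $G^{+}$ is a limit of the curves $\gamma\in\mathcal{C}$, so $\partial G^{+}\subset\mathcal{A}$, and for $p\in\partial G^{+}$ the limit $\gamma^{+}:=\omega(p)$ is again a periodic orbit of $\mathcal{A}$ lying in $\partial G^{+}$. It remains to check $\gamma\leqslant\gamma^{+}$ for all $\gamma\in\mathcal{C}$; since $\gamma^{+}\cap G^{+}=\emptyset$ while each $\gamma\subset G^{+}$ (every non-maximal orbit lies in the interior of a larger one of the chain), the curves $\gamma$ and $\gamma^{+}$ are disjoint, and I must place $\gamma$ inside $\gamma^{+}$, i.e. exclude both $\gamma^{+}\subset\mathcal{G}_{\gamma}$ (impossible, as it would force $\gamma^{+}\subset G^{+}$) and the configuration in which $\gamma$ and $\gamma^{+}$ are mutually exterior.

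Ruling out this last, mutually-exterior configuration is the main obstacle. The resolution I intend is to show that $\gamma^{+}$ actually encloses all of $G^{+}$, equivalently that the boundary $\partial G^{+}$ is a single periodic orbit separating $G^{+}$ from infinity. This combines two ingredients: the connectedness of $\partial G^{+}$ with the absence of admissible stationary points, so that by the trichotomy and Corollary \ref{COR: Th1CorAlphaLimit} (applied to the backward orbits trapped in the compact invariant set $\operatorname{Cl}G^{+}$) the limit sets on $\partial G^{+}$ cannot be linked through stationary points and $\partial G^{+}$ reduces to one periodic orbit; and the strict monotonicity of Lemma \ref{LEM: PeriodicSeparateLemma}, which prevents the projected curves $\Pi\gamma$ from accumulating onto $\gamma^{+}$ from its exterior while remaining outside $\gamma^{+}$. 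Once $G^{+}=\mathcal{G}_{\gamma^{+}}$ is established, the inclusion $\gamma\subset G^{+}=\mathcal{G}_{\gamma^{+}}$ gives $\gamma\leqslant\gamma^{+}$, so $\gamma^{+}$ is the required upper bound (and if $\mathcal{C}$ has a greatest element it serves as $\gamma^{+}$ directly).
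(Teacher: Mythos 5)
Your lower-bound argument is correct and essentially complete, and it takes a slightly different route from the paper: where the paper invokes the closedness of $\operatorname{Per}(\mathcal{A})$ (Lemma \ref{LEM: PeriodicSetClosed}) to conclude that the boundary of $\bigcap_{\gamma}\operatorname{Cl}\mathcal{G}_{\gamma}$ consists of periodic points, you pick a single boundary point $p$ and produce the bounding orbit as $\omega(p)$ via Theorem \ref{TH: PBTrichotomy} and the standing hypothesis on stationary points. This works because for the intersection the crucial inclusion $\gamma^{-}\subset K^{-}\subset\operatorname{Cl}\mathcal{G}_{\gamma}$ comes for free, so disjointness of distinct periodic orbits immediately yields $\gamma^{-}\leqslant\gamma$; no accumulation issue can arise.

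The upper bound, however, has a genuine gap, exactly at the point you yourself flag as ``the main obstacle'': ruling out the configuration in which the chain curves accumulate on $\gamma^{+}=\omega(p)\subset\partial G^{+}$ from its exterior, i.e. $G^{+}\cap\mathcal{G}_{\gamma^{+}}=\emptyset$. Neither of the two ingredients you propose closes it. First, Lemma \ref{LEM: PeriodicSeparateLemma} produces a $\delta=\delta(\gamma^{+},\gamma)$ depending on \emph{both} orbits; applied pairwise it only shows that each individual chain curve keeps some positive distance $\delta_{\gamma}$ from $\gamma^{+}$, which is perfectly compatible with the family accumulating on $\gamma^{+}$ from outside, since $\delta_{\gamma}$ may shrink to zero along the chain. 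Second, the claim that connectedness of $\partial G^{+}$ together with Theorem \ref{TH: PBTrichotomy} and Corollary \ref{COR: Th1CorAlphaLimit} forces $\partial G^{+}$ to be a \emph{single} periodic orbit does not follow from what you cite: a connected compact invariant set all of whose points have periodic $\alpha$- and $\omega$-limit sets could a priori still contain several disjoint periodic orbits joined by orbits spiralling between them, and excluding this requires a separate argument; it is also more than the lemma needs, since only $G^{+}\subset\mathcal{G}_{\gamma^{+}}$ is required, not $G^{+}=\mathcal{G}_{\gamma^{+}}$.

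What does close the gap is a local argument at $\gamma^{+}$ combining the transversal machinery with continuous dependence. Take $q\in\gamma^{+}$ and an $\varepsilon$-transversal $\mathcal{T}$ at $q$ (Theorem \ref{TH: HajekTheorem}); by Lemma \ref{LEM: BendixsonLemma}, applied to a point of a chain orbit (which lies in its own $\omega$-limit set), every chain orbit meets $\mathcal{T}$ at most once. Chain orbits cross $\mathcal{T}$ at points $x_{k}\to q$; by uniform continuity of the flow on compact sets over a time span of twice the period of $\gamma^{+}$, the orbit through $x_{k}$ stays in a prescribed annular neighbourhood $N$ of $\gamma^{+}$ and returns to $\mathcal{T}$ near $q$, and the single-crossing property forces the return point to be $x_{k}$ itself; hence the whole curve $\gamma_{k}$ lies in $N$ and, as a parametrized loop, is uniformly close to $\gamma^{+}$, so it has the same nonzero winding number as $\gamma^{+}$ about points of $\mathcal{G}_{\gamma^{+}}$ far from $N$. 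Thus $\mathcal{G}_{\gamma_{k}}$ meets $\mathcal{G}_{\gamma^{+}}$, and disjointness of the two curves gives $\gamma^{+}\subset\mathcal{G}_{\gamma_{k}}\subset G^{+}$, contradicting $\gamma^{+}\subset\partial G^{+}$; connectedness of $G^{+}$ then gives $G^{+}\subset\mathcal{G}_{\gamma^{+}}$ as desired. In fairness, the paper is also terse exactly here (after establishing via Lemma \ref{LEM: PeriodicSetClosed} that the boundary consists of periodic points, it asserts that the union must contain a point of $\mathcal{G}_{\gamma^{+}}$), but an argument of the above kind is what substantiates that assertion, and the substitutes you propose do not.
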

\begin{proof}
	To construct an upper bound we consider the set $\mathcal{G}^{+}:=\bigcup_{\gamma \in \mathcal{C}}\mathcal{G}_{\gamma}$. Since each of $\mathcal{G}_{\gamma}$'s is invariant and uniformly bounded, the set $\mathcal{G}^{+}$ is invariant and bounded. As a consequence, its boundary $\partial \mathcal{G}^{+}$ is a non-empty invariant compact set of $\mathfrak{A}$. It is easy to see that every point in $\partial \mathcal{G}^{+}$ is a limit of periodic points and, by Lemma \ref{LEM: PeriodicSetClosed}, $\partial \mathcal{G}^{+}$ consists of periodic points. Let $\gamma^{+} \subset \partial \mathcal{G}^{+}$ be a periodic orbit. By the previous argument, $\gamma^{+}$ is a limit of some sequence of $\gamma_{k} \in \mathcal{C}$, $k=1,2,\ldots$. From this it follows that $\mathcal{G}^{+}$ contains a point in the interior of $\gamma^{+}$ and since $\mathcal{G}^{+}$ is connected it must lie in this interior part. Therefore $\gamma \leqslant \gamma_{+}$ for every $\gamma \in \mathcal{C}$ that is required.
	
	Now we consider the set $\mathcal{G}_{-} = \bigcap_{\gamma \in \mathcal{C}} \operatorname{Cl}\mathcal{G}_{\gamma}$. Clearly, $\mathcal{G}_{-}$ is a compact invariant subset of $\mathfrak{A}$. As above, its boundary $\partial \mathcal{G}_{-}$ is a non-empty compact invariant set, which consists of periodic points. Let $\gamma^{-} \subset \partial \mathcal{G}_{-}$ be a periodic orbit. By a similar as above argument, $\gamma^{-} \leqslant \gamma$ for every $\gamma \in \mathcal{C}$ and, consequently, $\gamma^{-}$ is a lower bound.
\end{proof}

\begin{proof}[Proof of Theorem \ref{TH: PeriodicOrbitExis}]
By Lemma \ref{LEM: PeriodicChain} and Zorn's lemma the set $\Gamma(\mathcal{A})$ contains a maximal element $\gamma_{max}$, i.~e. there is no $\gamma \in \Gamma(\mathcal{A})$, $\gamma \not= \gamma_{max}$, such that $\gamma_{max} \leq \gamma$. Let us show that there exists at least one externally stable periodic orbit. If $\gamma_{max}$ is not externally stable then there is a point $v_{0} \in (\mathfrak{A} \setminus \operatorname{Cl}\mathcal{G}_{\gamma_{max}}) \cap \mathcal{U}_{\mathcal{A}}$, such that $\alpha(v_{0}) = \gamma_{max}$ and $\omega(v_{0}) \not= \gamma^{+}$. Since $\omega(v_{0}) \subset \mathcal{A}$ and the stationary points in $\mathcal{A}$ are unstable terminal, the set $\omega(v_{0})$ does not contain any stationary point and, consequently, by Theorem \ref{TH: PBTrichotomy} it is a periodic orbit $\widetilde{\gamma} \not= \gamma_{max}$. Since $\alpha(v_{0}) = \gamma_{max}$ the point $v_{0}$ lies in the exterior of $\widetilde{\gamma}$, i.~e. in $\mathfrak{A} \setminus \operatorname{Cl}\mathcal{G}_{\widetilde{\gamma}}$. Therefore, $\widetilde{\gamma}$ is externally stable.

Let $\Gamma^{ext}(\mathcal{A})$ denote the set of all externally stable periodic orbits in $\mathcal{A}$. By the above considerations, the set $\Gamma^{ext}(\mathcal{A})$ is not empty. Let $\mathcal{C}^{ext}$ be a chain in $\Gamma^{ext}(\mathcal{A})$. By Lemma \ref{LEM: PeriodicChain}, there exists a minimal element $\gamma_{min} \in \Gamma(\mathcal{A})$. Moreover, by the construction of $\gamma_{min}$ in the proof we have that either $\gamma_{min} \in \mathcal{C}^{ext}$ or $\gamma_{min}$ is a limit of a sequence of periodic orbits $\gamma_{k} \in \mathcal{C}^{ext}$ with $\gamma_{min} \leqslant \gamma_{k}$ and $\gamma_{k} \not= \gamma_{min}$, $k=1,2,\ldots$. In any of these cases we have $\gamma_{min} \in \Gamma^{ext}(\mathcal{A})$. By Zorn's lemma, there exists a minimal element $\gamma^{ext}_{min} \in \Gamma^{ext}(\mathcal{A})$, i.~e. there is no $\gamma^{ext} \in \Gamma^{ext}(\mathcal{A})$ such that $\gamma^{ext} \leqslant \gamma^{ext}_{min}$ and $\gamma^{ext} \not=\gamma^{ext}_{min}$.

Let us show that $\gamma^{ext}_{min}$ is internally stable. If $\gamma^{ext}_{min}$ is not a limit of a sequence of periodic orbits $\gamma_{k}$ with $\gamma_{k} \leq \gamma^{ext}_{min}$ and $\gamma_{k} \not= \gamma^{ext}_{min}$, $k=1,2,\ldots$, then there is $\delta>0$ such that $\mathcal{U}^{int}:=\mathcal{O}_{\delta}(\gamma^{ext}_{min}) \cap \mathcal{G}_{\gamma^{ext}_{min}}$ lies in $\mathcal{U}_{\mathcal{A}}$ and does not contain any periodic or stationary points. Then any point $v_{0} \in \mathcal{U}^{int}$ must satisfy $\omega(v_{0})=\gamma^{ext}_{min}$. Indeed, since all stationary points in $\mathcal{A}$ are unstable terminal, $\omega(v_{0})$ must be a periodic orbit $\widetilde{\gamma}$ with $\widetilde{\gamma} \subset \operatorname{Cl}\mathcal{G}_{\gamma^{ext}_{min}}$. If $\widetilde{\gamma} \not= \gamma^{ext}_{min}$ then $v_{0} \in \mathfrak{A} \setminus \operatorname{Cl}\mathcal{G}_{\widetilde{\gamma}}$ and, consequently, $\widetilde{\gamma}$ is an externally stable orbit with $\widetilde{\gamma} \leqslant \gamma^{ext}_{min}$ that contradicts to the minimality of $\gamma^{ext}_{min}$. So, the externally stable orbit $\gamma^{ext}_{min}$ is also internally stable and, by Lemmas \ref{LEM: ExtIntStable} and \ref{LEM: AmenStOrbSt}, it is orbitally stable. The proof is finished.
\end{proof}
\section{Delay equations in $\mathbb{R}^{n}$}

Let us consider the following class of nonlinear delay differential equations in $\mathbb{R}^{n}$:
\begin{equation}
	\label{EQ: ClassicalDelayEquation}
	\dot{x}(t) = Ax_{t} + BF(Cx_{t}),
\end{equation}
where $x_{t}(\theta) := x(t+\theta)$, $\theta \in [-\tau,0]$, denotes the history segment; $\tau>0$ is a constant delay; $A \colon C([-\tau,0];\mathbb{R}^{n}) \to \mathbb{R}^{n}$, $B \colon \mathbb{R}^{m} \to \mathbb{R}^{n}$ and $C \colon C([-\tau,0];\mathbb{R}^{n}) \to \mathbb{R}^{r}$ are bounded linear operators and $F \colon \mathbb{R}^{r} \to \mathbb{R}^{m}$ is a nonlinear continuous function such that for some constant $\Lambda > 0$ we have
\begin{equation}
	\label{EQ: DelayLipschitz}
	|F(\sigma_{1}) - F(\sigma_{2})|_{1} \leq \Lambda |\sigma_{1}-\sigma_{2}|_{2} \text{ for all } \sigma_{1},\sigma_{2} \in \mathbb{R}^{r}, t \in \mathbb{R}.
\end{equation}
Here $|\cdot|_{1}, |\cdot|_{2}$ denote \underline{any} (this adds some flexibility in applications) inner products in $\mathbb{R}^{m}$ and $\mathbb{R}^{r}$ respectively.

From the classical theory \cite{Hale1977} it follows that for any $\phi_{0} \in C([-\tau,0];\mathbb{R}^{n})$ there is a unique classical solution $x(\cdot)=x(\cdot,\phi_{0}) \colon [-\tau,+\infty) \to \mathbb{R}^{n}$, i.~e. such that $x_{t_{0}} \equiv \phi_{0}$, $x(\cdot) \in C^{1}([0,+\infty);\mathbb{R}^{n})$ and $x(\cdot)$ satisfies \eqref{EQ: ClassicalDelayEquation} for $t \geq 0$. Let us define the semiflow $\varphi^{t} \colon C([-\tau,0];\mathbb{R}^{n}) \to C([-\tau,0];\mathbb{R}^{n})$, where $t \geq 0$, as $\varphi^{t}(\phi_{0}):=x_{t}(\cdot,s,\phi_{0})$, where $x_{t}(\theta,s,\phi_{0}) = x(t+\theta,s,\phi_{0})$ for $\theta \in [-\tau,0]$. For $\varphi$ we have that for any $T \geq 0$ there exists a constant $C_{L}=C_{L}(T)$ such that the inequality 
\begin{equation}
	\|\varphi^{t}(v_{1})-\varphi^{t}(v_{2})\|_{\mathbb{E}} \leq C_{L} \|v_{1}-v_{2}\|_{\mathbb{E}}.
\end{equation}
is satisfied for all $v_{1},v_{2} \in C([-\tau,0];\mathbb{R}^{n})$ and all $t \in [0,T]$. From this, \eqref{EQ: ClassicalDelayEquation} and the Arzel\'{a}–Ascoli theorem we immediately have that $\varphi$ satisfies \textbf{(COM)} with $\tau_{com} = \tau$.

Now let us consider the Hilbert space $\mathbb{H} := \mathbb{R}^{n} \times L_{2}(-\tau,0;\mathbb{R}^{n})$ and the continuous embedding of $\mathbb{E}:=C([-\tau,0];\mathbb{R}^{n})$ into $\mathbb{H}$ given by $\phi(\cdot) \mapsto (\phi(0),\phi(\cdot))$. As usual, we identify the elements of $\mathbb{E}$ and $\mathbb{H}$ under the embedding. Applying Theorem 1.1 from \cite{Anikushin2020Semigroups} we have the following theorem.
\begin{theorem}
	Under \eqref{EQ: DelayLipschitz} the semiflow $\varphi$ generated by equation \eqref{EQ: ClassicalDelayEquation} satisfies \textbf{(COM)} with $\tau_{com} = 2\tau$ and for any $T \geq 0$ there is a constant $C_{S}=C_{S}(T)>0$ such that
	\begin{equation}
		\| \varphi^{t}(v_{1}) - \varphi^{t}(v_{2})\|_{\mathbb{E}} \leq C_{S} |v_{1}-v_{2}|_{\mathbb{H}}
	\end{equation}
    is satisfied for all $v_{1},v_{2} \in \mathbb{E}$ and $t \in [\tau,\tau+T]$. In particular, \textbf{(S)} is satisfied with $\tau_{S} = \tau$.
\end{theorem}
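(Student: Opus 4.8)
The plan is to reduce the statement to Theorem 1.1 of \cite{Anikushin2020Semigroups} by casting \eqref{EQ: ClassicalDelayEquation} in the product-space framework $\mathbb{H} = \mathbb{R}^{n} \times L_{2}(-\tau,0;\mathbb{R}^{n})$ used there, so the bulk of the work is verification of hypotheses rather than new estimates. First I would record that the right-hand side of \eqref{EQ: ClassicalDelayEquation} splits into the linear delay equation $\dot{x} = Ax_{t}$, whose solution semigroup $G(\cdot)$ on $\mathbb{H}$ is strongly continuous, and the nonlinear term $\mathcal{F}(v) := BF(Cv)$. Using \eqref{EQ: DelayLipschitz} together with the boundedness of $C \in \mathcal{L}(\mathbb{E};\mathbb{R}^{r})$ and $B \in \mathcal{L}(\mathbb{R}^{m};\mathbb{R}^{n})$, one checks that $\mathcal{F}$ is globally Lipschitz as a map $\mathbb{E} \to \mathbb{R}^{n}$, with constant $\|B\|\,\Lambda\,\|C\|$. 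These are exactly the structural assumptions under which the cited theorem applies.

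The decisive ingredient extracted from that theorem is the $\mathbb{H} \to \mathbb{E}$ smoothing of the flow for $t \geq \tau$. The mechanism I would emphasize is that for $t \geq \tau$ the segment $\varphi^{t}(v)$ consists only of forward values $x(s)$, $s \geq 0$, which are governed by the integrated equation $x(s) = v(0) + \int_{0}^{s}\big(Ax_{u}+\mathcal{F}(x_{u})\big)\,du$. Crucially, the product-space norm controls the head value, $|v(0)| \leq |v|_{\mathbb{H}}$, while every point evaluation hidden inside $A$ (and inside $C$) is integrated in $u$ and thereby turned into an $L_{2}$-controlled quantity by Cauchy--Schwarz. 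This is precisely what allows $\sup_{s \in [t-\tau,t]}|x(s)|$ to be bounded by $|v|_{\mathbb{H}}$ rather than by the stronger $\|v\|_{\mathbb{E}}$.

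Finally I would carry out the difference estimate. Setting $y_{t} := \varphi^{t}(v_{1}) - \varphi^{t}(v_{2})$ and using the variation-of-constants formula $y_{t} = G(t)(v_{1}-v_{2}) + \int_{0}^{t} G(t-s)\big(\mathcal{F}(\varphi^{s}v_{1})-\mathcal{F}(\varphi^{s}v_{2}),0\big)\,ds$, the linear term is bounded in $\mathbb{E}$ by $M|v_{1}-v_{2}|_{\mathbb{H}}$ through the smoothing of $G(t)$, $t \geq \tau$; for the integral I would split $[0,t]$ at $t-\tau$, applying the smoothing of $G(t-s)$ on $[0,t-\tau]$ and the boundedness of the convolution kernel near $s=t$, estimating $|\mathcal{F}(\varphi^{s}v_{1})-\mathcal{F}(\varphi^{s}v_{2})|$ by the Lipschitz constant times $\|y_{s}\|_{\mathbb{E}}$ (here the already-established bound $\|\varphi^{s}(v_{1})-\varphi^{s}(v_{2})\|_{\mathbb{E}} \leq C_{L}\|v_{1}-v_{2}\|_{\mathbb{E}}$ enters). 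Closing this by a Gr\"{o}nwall argument in a mixed $(\mathbb{H},\mathbb{E})$-norm yields $\|y_{t}\|_{\mathbb{E}} \leq C_{S}(T)|v_{1}-v_{2}|_{\mathbb{H}}$ uniformly for $t \in [\tau,\tau+T]$. I expect this closing step to be the main obstacle: since $\mathcal{F}$ is only $\mathbb{E}$-Lipschitz (the operator $C$ is not $\mathbb{H}$-continuous), one cannot run Gr\"{o}nwall purely in the $\mathbb{H}$-norm and must instead bootstrap the $\mathbb{H}$-to-$\mathbb{E}$ gain through the smoothing kernel --- this mixed-norm bootstrap is the substance of Theorem 1.1 in \cite{Anikushin2020Semigroups}. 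The compactness \textbf{(COM)} with $\tau_{com}=2\tau$ then follows by composing the already-established compactness of $\varphi^{\tau}$ with the continuity of $\varphi^{\tau}$, and taking $t=\tau$ in the smoothing estimate gives \textbf{(S)} with $\tau_{S}=\tau$.
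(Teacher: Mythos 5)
Your proposal is correct and takes essentially the same route as the paper: the paper's entire proof of this theorem is the invocation of Theorem 1.1 from \cite{Anikushin2020Semigroups}, which is exactly the reduction you perform, and your sketch of its internal mechanism (head value plus Cauchy--Schwarz on the integrated point evaluations, closed by a Gr\"{o}nwall argument in a mixed norm) is consistent with what that cited result provides. The only cosmetic difference is that you get \textbf{(COM)} with $\tau_{com}=2\tau$ by composing the compactness of $\varphi^{\tau}$ (already established in the paper via Arzel\'{a}--Ascoli) with the continuity of $\varphi^{\tau}$, which is equally valid.
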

In fact, in \cite{Anikushin2020Semigroups} it is proved that \eqref{EQ: ClassicalDelayEquation} is well-posed in $\mathbb{H}$ and the generalized solutions are uniquely defined by the semiflow $\varphi$ embedded from $\mathbb{E}$ into $\mathbb{H}$.

Now we are going to discuss verification of \textbf{(H1)}, \textbf{(H2)} and \textbf{(H3)}. By the Riesz–Markov–Kakutani representation theorem, there are functions $a(\cdot) \colon [-\tau,0] \to \mathbb{R}^{n}$ and $c(\cdot) \colon [-\tau,0] \to \mathbb{R}^{r}$ of bounded variation such that for any $\phi \in C([-\tau,0];\mathbb{R}^{n})$ we have
\begin{equation}
	A\phi = \int_{-\tau}^{0} d a(\theta)\phi(\theta) \text{ and } C\phi = \int_{-\tau}^{0} d c(\theta) \phi(\theta).
\end{equation}
It is known \cite{Hale1977} that the spectrum of the linear part of \eqref{EQ: ClassicalDelayEquation} is discrete and determined by the roots $p \in \mathbb{C}$ of
\begin{equation}
	\label{EQ: SpectrumDelayODE}
	\operatorname{det} \left( \int_{-\tau}^{0}d a(\theta)e^{p\theta} - p I \right) = 0.
\end{equation}
Moreover, for all $\nu \in \mathbb{R}$ equation \eqref{EQ: SpectrumDelayODE} has only finite number of roots with $\operatorname{Re}p > \nu$. If for some $\nu > 0$ there is no roots of \eqref{EQ: SpectrumDelayODE} with $\operatorname{Re}p = -\nu$ and exactly $j$ roots with $\operatorname{Re}p > -\nu$, then there are subspaces $\mathbb{E}^{s}(\nu)$ and $\mathbb{E}^{u}(\nu)$, where $\mathbb{E}^{u}(\nu)$ is the $j$-dimensional generalized eigenspace corresponding to the roots with $\operatorname{Re}p > - \nu$ and $\mathbb{E}^{s}(\nu)$ is a complementary subspace, such that $\mathbb{E} = \mathbb{E}^{s}(\nu) \oplus \mathbb{E}^{u}(\nu)$ (see \cite{Hale1977}).

Now let us consider the functions
\begin{equation}
	\alpha(p) := \int_{-\tau}^{0}e^{p\theta} d a(\theta)  \text{ and } \gamma(p) = \int_{-\tau}^{0} e^{p \theta} d c(\theta) .
\end{equation}
The matrix function $W(p) = \gamma(p) (\alpha(p) - p I)^{-1}B$, which may have poles in the roots $p \in \mathbb{C}$ of \eqref{EQ: SpectrumDelayODE}, is called the \textit{transfer function} of the triple $(A,B,C)$. In fact, it is a more convenient (for delay equations) form of the usual operator transfer function $W(p)=\hat{C}(\hat{A}-pI)^{-1}\hat{B}$, where $\hat{C}$, $\hat{A}$ and $\hat{B}$ are properly defined operators corresponding to $C,A,B$ from \eqref{EQ: ClassicalDelayEquation} (see \cite{Anikushin2020FreqDelay}).

The following theorem is contained in the results from Section 5 in \cite{Anikushin2020FreqDelay}.
\begin{theorem}
	\label{TH: DelayODEFreqCond}
	Suppose there is $\nu > 0$ such that equation \eqref{EQ: SpectrumDelayODE} has exactly $j$ roots with $\operatorname{Re}p > -\nu$ and does not have roots with $\operatorname{Re}p = -\nu$. Moreover, let the frequency-domain condition
	\begin{equation}
		\label{EQ: FreqDomainDelaySmith}
		|W(-\nu + i \omega)|_{\mathbb{C}^{m} \to \mathbb{C}^{r}} < \Lambda^{-1} \text{ for all } \omega \in \mathbb{R},
	\end{equation}
    where the matrix norm $|\cdot|_{\mathbb{C}^{m} \to \mathbb{C}^{r}}$ is induced by the inner products $|\cdot|_{1}$ and $|\cdot|_{2}$ chosen in \eqref{EQ: DelayLipschitz} for $\mathbb{R}^{m}$ and $\mathbb{R}^{r}$, be satisfied. Then there exists an operator $P \in \mathcal{L}(\mathbb{E};\mathbb{E}^{*})$ such that the semiflow satisfies \textbf{(H1)} with $\mathbb{E}^{+}= \mathbb{E}^{s}(\nu)$ and $\mathbb{E}^{-}=\mathbb{E}^{u}(\nu)$; \textbf{(H2)} with the given $j$ and \textbf{(H3)} with the given $\nu$, some $\delta > 0$ and $\tau_{V} =0$.
\end{theorem}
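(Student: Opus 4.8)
The plan is to recognize \eqref{EQ: FreqDomainDelaySmith} as the frequency inequality required by the frequency theorem for delay equations proved in \cite{Anikushin2020FreqDelay}, and then to feed the increment of the nonlinearity into the resulting quadratic relation. First I would fix $v_1, v_2 \in \mathbb{E}$, set $w(t) := \varphi^t(v_1) - \varphi^t(v_2)$, and let $\eta_0(t) := F(C\varphi^t(v_1)) - F(C\varphi^t(v_2))$ be the increment of the nonlinearity along the two solutions. In the generalized sense in $\mathbb{H}$ from \cite{Anikushin2020Semigroups} the difference $w$ solves the linear inhomogeneous equation $\dot{w} = \hat{A}w + \hat{B}\eta_0$ with output $\hat{C}w = C\varphi^t(v_1) - C\varphi^t(v_2)$, and the Lipschitz bound \eqref{EQ: DelayLipschitz} becomes the pointwise quadratic constraint $|\eta_0(t)|_1^2 \leq \Lambda^2 |\hat{C}w(t)|_2^2$. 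To install the exponential weight I pass to $u(t) := e^{\nu t}w(t)$ and $\eta(t) := e^{\nu t}\eta_0(t)$, so that $\dot{u} = (\hat{A}+\nu I)u + \hat{B}\eta$ while the constraint $|\eta|_1^2 \leq \Lambda^2|\hat{C}u|_2^2$ is preserved.

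The decisive step is to apply the frequency theorem to the shifted system with the quadratic cost
\begin{equation*}
\mathcal{F}(u,\eta) := \Lambda^2 |\hat{C}u|_2^2 - |\eta|_1^2.
\end{equation*}
Evaluating $\mathcal{F}$ on the frequency response $u = (i\omega I - \hat{A} - \nu I)^{-1}\hat{B}\eta$ gives $\hat{C}u = -W(-\nu+i\omega)\eta$, hence $\mathcal{F} = \Lambda^2|W(-\nu+i\omega)\eta|_2^2 - |\eta|_1^2$, and \eqref{EQ: FreqDomainDelaySmith} is exactly the assertion that this expression is negative definite in $\eta$ uniformly in $\omega \in \mathbb{R}$. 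Together with the spectral hypothesis (no roots of \eqref{EQ: SpectrumDelayODE} on $\operatorname{Re}p = -\nu$ and exactly $j$ of them to the right), which supplies the hyperbolicity of $\hat{A}+\nu I$ and the splitting $\mathbb{E} = \mathbb{E}^{s}(\nu)\oplus\mathbb{E}^{u}(\nu)$, this is precisely the input of the frequency theorem from Section 5 of \cite{Anikushin2020FreqDelay}. That theorem yields a symmetric $P \in \mathcal{L}(\mathbb{E};\mathbb{E}^{*})$ and a number $\delta>0$ with
\begin{equation*}
2\langle (\hat{A}+\nu I)u + \hat{B}\eta, Pu\rangle + \mathcal{F}(u,\eta) \leq -\delta |u|_{\mathbb{H}}^2
\end{equation*}
for all admissible $u,\eta$, and it certifies that the number of negative squares of $P$ equals the dimension $j$ of $\mathbb{E}^{u}(\nu)$, with $P$ negative on $\mathbb{E}^{-}=\mathbb{E}^{u}(\nu)$ and positive on $\mathbb{E}^{+}=\mathbb{E}^{s}(\nu)$; this is \textbf{(H1)} and \textbf{(H2)}.

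Finally, to derive \textbf{(H3)} I would substitute the actual forcing $\eta$, for which the Lipschitz constraint gives $\mathcal{F}(u,\eta) \geq 0$, so that the displayed relation collapses to $\frac{d}{dt}V(u) = 2\langle \dot{u}, Pu\rangle \leq -\delta|u|_{\mathbb{H}}^2$ along the (classical, for $t>0$) trajectory. Since $V$ is quadratic, $V(u(t)) = e^{2\nu t}V(w(t))$ and $|u(t)|_{\mathbb{H}}^2 = e^{2\nu t}|w(t)|_{\mathbb{H}}^2$, so integrating over $[l,r]$ reproduces \eqref{EQ: SqueezingProperty} with the given $\nu$, this $\delta$, and $\tau_V = 0$. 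The genuine obstacle is none of this bookkeeping but the construction of the frequency theorem itself for the unbounded delay generator $\hat{A}$: the realization $(\hat{A},\hat{B},\hat{C})$ must meet the regularity and relaxed stabilizability hypotheses (in the spirit of \cite{Proskurnikov2015}) that guarantee both solvability of the associated operator inequality and the regularity $P \in \mathcal{L}(\mathbb{E};\mathbb{E}^{*})$ rather than merely a form on $\mathbb{H}$. This is exactly what is established in \cite{Anikushin2020FreqDelay}, so here it is only invoked.
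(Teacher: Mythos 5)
Your proposal is correct and takes essentially the same approach as the paper: the paper's entire proof is the remark that the theorem ``is contained in the results from Section 5 in \cite{Anikushin2020FreqDelay},'' which is precisely the frequency theorem you invoke, and your surrounding bookkeeping (difference of trajectories as a forced linear system, the Lipschitz constraint $|\eta|_{1}^{2}\leq\Lambda^{2}|\hat{C}u|_{2}^{2}$, the exponential weight $e^{\nu t}$, and integration of the resulting Lyapunov inequality to obtain \eqref{EQ: SqueezingProperty} with $\tau_{V}=0$) is the standard mechanism by which that theorem yields \textbf{(H1)}--\textbf{(H3)}. You also correctly identify that the only genuinely nontrivial content — solvability of the operator inequality and the sign structure of $P$ on $\mathbb{E}^{s}(\nu)\oplus\mathbb{E}^{u}(\nu)$ — resides in the cited reference rather than in this paper.
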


The frequency-domain condition \eqref{EQ: FreqDomainDelaySmith} for the case where $|\cdot|_{1}$ and $|\cdot|_{2}$ are Euclidean inner product was used by R.~A.~Smith \cite{Smith1992}. For delay and parabolic equations Smith did not use quadratic functionals (as in his earlier work \cite{Smith1987OrbStab}) since he was unable to provide adequate conditions for their existence. In \cite{Anikushin2020FreqDelay} it was firstly proved by the present author that under \eqref{EQ: FreqDomainDelaySmith} there exists an operator $P$ with the required properties. Thus, his results are contained in our theory. See also \cite{Anikushin2020FreqDelay} for a discussion about other frequency-domain conditions.

Thus, if the hypotheses of Theorem \ref{TH: DelayODEFreqCond} are satisfied with $j=2$, we can apply Theorem \ref{TH: PBTrichotomy} to get the Poincar\'{e}-Bendixson trichotomy of $\omega$-limit sets. For applications of Theorem \ref{TH: PeriodicOrbitExis}, which guarantees the existence of an orbitally stable periodic orbit, one usually constructs a bounded invariant region and check that all stationary points in this region are unstable terminal. The simplest case is when the system is dissipative. A simple dissipativity criterion for delay equations in $\mathbb{R}^{n}$ is given by R.~A.~Smith (see \cite{Smith1992}). To show that a stationary point is unstable terminal one should take the linearization of \eqref{EQ: ClassicalDelayEquation} at this point, check if there is only two unstable eigenvalues and then apply the Unstable Manifold Theorem for delay equations (see \cite{Hale1977}).

There are many ways to write a given delay equation in the form \eqref{EQ: ClassicalDelayEquation}, which is known in the control theory as the \textit{Lur'e form}. This leads to different frequency-domain conditions and gives the main flexibility in applications.

\subsection{A concrete example}

\label{SEC: DelayEqs}
Now let us consider applications of Theorem \ref{TH: PeriodicOrbitExis} to the system of Goodwin delay equations
\begin{equation}
\label{EQ: DelayGoodwin}
\begin{split}
\dot{x}_{1}(t)&=g(x_{n}(t-\tau) )-\lambda x_{1}(t),\\
\dot{x}_{2}(t)&=x_{1}(t)-\lambda x_{2}(t),\\
\ldots\\
\dot{x}_{n}(t)&=x_{n-1}(t)-\lambda x_{n}(t).
\end{split}
\end{equation}
Here $\tau,\lambda>0$ are positive constants and $g \colon \mathbb{R} \to \mathbb{R}$ is a continuous scalar-valued function, which is continuously differentiable in $(0,+\infty)$ and satisfies $0 > g'(\sigma) \geq -\kappa_{0}$ for some $\kappa_{0}>0$ and $\sigma \in (0,+\infty)$. Moreover, $g(\sigma) \to 0$ as $\sigma \to+\infty$.

System \eqref{EQ: DelayGoodwin} can be used as a model for certain biochemical reactions concerned with the protein synthesis. In this case the quantities $x_{j}$ represent concentrations of certain chemicals and therefore must satisfy $x_{j} \geq 0$. From our assumptions it is clear that the cone of non-negative functions in $C([-\tau,0];\mathbb{R}^{n})$ is invariant w.~r.~t. solutions of \eqref{EQ: DelayGoodwin}. In \cite{Smith1992} such systems (for $n=3$ and $n=4$) were studied by R.~A. Smith. For simplicity, we consider only the case $n=3$ to show some nuances arising in applications. In order to apply our results to study equation \eqref{EQ: DelayGoodwin} we redefine the nonlinearity $g$ outside some bounded invariant region.

For $\beta > 1$ we put $\sigma_{\beta}:= \left(\beta/\lambda\right)^{3} g(0)$ and consider the family of sets 
\begin{equation}
\mathcal{W}_{\beta}:=\{ \phi=(\phi_{1},\phi_{2},\phi_{3}) \in C([-\tau,0];\mathbb{R}^{3}) \ | \ (\beta\lambda)^{-j} g( \sigma_{\beta} ) < \phi_{j} < \left(\beta/\lambda\right)^{j} g(0) \}.
\end{equation}
From Lemma 14 in \cite{Smith1992} it follows that for every $\beta>1$ there is $\beta' \in (1,\beta)$ such that the set $\mathcal{A}:=\operatorname{Cl}\mathcal{W}_{\beta'}$, where the closure is taken in $\mathbb{E}$, can be considered as an attractor with the neighborhood $\mathcal{U}_{\mathcal{A}} := \mathcal{W}_{\beta}$. In \cite{Smith1992} it is also shown that $\mathcal{W}_{\beta}$ for $\beta > 1$ is an invariant set. For every $\beta \geq 1$ we consider the values (here $C\phi:=\phi_{3}(-\tau)$ for $\phi \in C([-\tau,0];\mathbb{R}^{3})$)
\begin{equation}
\label{EQ: DelayDelta}
\delta_{\beta}:=\sup_{\phi \in \operatorname{Cl}\mathcal{W}_{\beta}} g'(C \phi) = \sup_{\sigma \in C(\operatorname{Cl}\mathcal{W}_{\beta})} g'(\sigma) < 0.
\end{equation}
Thus, for $\sigma \in\operatorname{Cl}W_{\beta}$ we have $-\kappa_{0} \leq g'(\sigma) \leq \delta_{\beta}$ with $\delta_{\beta} < 0$. By $g_{\beta}$ we denote the function that coincides with $g$ on $\operatorname{Cl}\mathcal{W}_{\beta}$ and smoothly extended outside of $\operatorname{Cl}\mathcal{W}_{\beta}$ in such a way that the inequality $-\kappa_{0} \leq g'_{\beta}(\sigma) \leq \delta_{\beta}$ is satisfied for all $\sigma \in \mathbb{R}$. We will study the semi-flow $\varphi_{\beta}$ in $\mathbb{E}$ generated by
\begin{equation}
	\label{EQ: DelayGoodwinBeta}
	\begin{split}
		\dot{x}_{1}(t)&=g_{\beta}(x_{3}(t-\tau))-\lambda x_{1}(t),\\
		\dot{x}_{2}(t)&=x_{1}(t)-\lambda x_{2}(t),\\
		\dot{x}_{3}(t)&=x_{2}(t)-\lambda x_{3}(t),
	\end{split}
\end{equation}
Note that \eqref{EQ: DelayGoodwinBeta} is a monotone cyclic feedback system\footnote{To see this one should rename the variables $x_{1},x_{2},x_{3}$ to $x_{2},x_{1},x_{0}$ (in terms of \cite{MalletParetSell1996}) respectively.} in the terminology of \cite{MalletParetSell1996} and, consequently, the conclusion of Theorem \ref{TH: PBTrichotomy} holds for $\varphi_{\beta}$ for any parameters $\tau>0$ and $\lambda>0$. In fact, there is a global stability region (see \cite{Anikushin2020FreqDelay}) and thus for certain parameters there is no periodic orbits. It is interesting to reveal parameters for which the existence of periodic orbits is guaranteed.

Below we shall consider different Lur'e forms of \eqref{EQ: DelayGoodwinBeta}. Namely, for $\rho>0$ we consider
\begin{equation}
\label{EQ: GoodwinRhoBeta}
\dot{x}(t) = A_{\rho}x_{t} + B F_{\rho,\beta}(Cx_{t}),
\end{equation}
where $F(\sigma):=g_{\beta}(\sigma) + \rho \sigma$ and for $\phi \in C([-\tau,0];\mathbb{R}^{3})$, $y \in \mathbb{R}$ we have
\begin{equation}
	A_{\rho}\phi := \begin{bmatrix}
		-\lambda \phi_{1}(0) - \rho \phi_{3}(-\tau) \\
		-\lambda \phi_{2}(0) \\
		-\lambda \phi_{3}(0)
	\end{bmatrix}
    \text{ and }
    B y := \begin{bmatrix}
    	y\\
    	0\\
    	0
    \end{bmatrix}.
\end{equation}
The transfer function (which was also used in \cite{Smith1992}) corresponding to the linear part of \eqref{EQ: GoodwinRhoBeta} is given by 
\begin{equation}
W_{\rho}(p):=-\frac{1}{(\lambda + p)^{3} e^{p \tau} + \rho}.
\end{equation}

We have the following theorem.
\begin{theorem}
	\label{TH: DelayFirstTheorem}
	Suppose that for some $\beta > 1$ there exists $\rho \in (-\delta_{\beta},\kappa_{0}]$ such that
	\begin{enumerate}
		\item[\textbf{(DF1)}] $\rho \tau^{3} e^{\lambda \tau} < 84.2$.
		\item[\textbf{(DF2)}] $\operatorname{Re} \left[(1 + (-\kappa_{0} + \rho) W_{\rho}(i\omega - \lambda) )^{*} ( 1 + (\rho + \delta_{\beta}) W_{\rho}(i\omega - \lambda) ) \right] > 0$ for all $\omega \in \mathbb{R}$.
	\end{enumerate}
    Then there exists an operator $P$ such that \textbf{(H1)}, \textbf{(H2)} with $j=2$ and \textbf{(H3)} are satisfied for the semi-flow $\varphi_{\beta}$ generated by \eqref{EQ: DelayGoodwinBeta}.
\end{theorem}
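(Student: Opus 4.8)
The plan is to deduce Theorem~\ref{TH: DelayFirstTheorem} from the frequency theorem for delay equations (the results of Section~5 in \cite{Anikushin2020FreqDelay}, of which Theorem~\ref{TH: DelayODEFreqCond} is the instance corresponding to a symmetric sector). I would apply it to the Lur'e form \eqref{EQ: GoodwinRhoBeta} with linear part $A_{\rho}$, weight $\nu=\lambda$, and nonlinearity $F_{\rho,\beta}=g_{\beta}+\rho(\cdot)$, taking as the quadratic constraint the incremental sector generated by the slope bounds of $F_{\rho,\beta}$. Three things must be checked: (i) the linear part $A_{\rho}$ has exactly $j=2$ eigenvalues with $\operatorname{Re}p>-\lambda$ and none on the line $\operatorname{Re}p=-\lambda$; (ii) $F_{\rho,\beta}$ obeys a global incremental sector condition; and (iii) the frequency-domain inequality associated with that sector is exactly \textbf{(DF2)}.

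For (i), I would count the roots of the characteristic quasi-polynomial of $A_{\rho}$, which coincide with the poles of $W_{\rho}$, i.e. the roots of $(\lambda+p)^{3}e^{p\tau}+\rho=0$. Substituting $q=(p+\lambda)\tau$ turns this into $q^{3}e^{q}=-R$ with $R:=\rho\tau^{3}e^{\lambda\tau}>0$, under which the half-plane $\operatorname{Re}p>-\lambda$ becomes $\operatorname{Re}q>0$. A purely imaginary root $q=iy$ forces $\cos y=0$ and $y^{3}\sin y=-R$, so the first crossing of the imaginary axis occurs at $y=3\pi/2$, i.e. at the threshold $R=(3\pi/2)^{3}\approx 104.6$; for $R$ below this value there is no root with $\operatorname{Re}q=0$. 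Since for small $R>0$ the equation reduces to $q^{3}\approx-R$, which has precisely the two roots $R^{1/3}e^{\pm i\pi/3}$ with positive real part (the remaining roots having large negative real part), and since the count can change only through a crossing, the count stays equal to $2$ on the whole range $0<R<(3\pi/2)^{3}$. Condition \textbf{(DF1)} is a sufficient explicit bound keeping $R$ below this threshold (this is the root count of \cite{Smith1992}), which gives $j=2$ and the splitting $\mathbb{E}=\mathbb{E}^{s}(\lambda)\oplus\mathbb{E}^{u}(\lambda)$ with $\dim\mathbb{E}^{u}(\lambda)=2$.

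For (ii), by construction $g_{\beta}$ is globally defined with $-\kappa_{0}\le g_{\beta}'(\sigma)\le\delta_{\beta}$ for all $\sigma\in\mathbb{R}$, so $F_{\rho,\beta}'=g_{\beta}'+\rho$ takes values in $[\kappa_{1},\kappa_{2}]$ with $\kappa_{1}:=\rho-\kappa_{0}$ and $\kappa_{2}:=\rho+\delta_{\beta}$. The hypothesis $\rho\in(-\delta_{\beta},\kappa_{0}]$ makes $\kappa_{1}\le 0<\kappa_{2}$, and the slope bound yields the global incremental sector inequality $\big(\Delta F-\kappa_{1}\Delta\sigma\big)\big(\Delta F-\kappa_{2}\Delta\sigma\big)\le 0$, where $\Delta F:=F_{\rho,\beta}(\sigma_{1})-F_{\rho,\beta}(\sigma_{2})$ and $\Delta\sigma:=\sigma_{1}-\sigma_{2}$. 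This is the quadratic constraint fed into the frequency theorem; note that the preliminary truncation of $g$ to $g_{\beta}$ is exactly what makes this constraint hold globally while leaving the flow unchanged on $\operatorname{Cl}\mathcal{W}_{\beta}$.

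For (iii), the frequency-domain inequality produced by the frequency theorem for this sector and the transfer function $W_{\rho}$, evaluated along the shifted line $p=-\lambda+i\omega$ corresponding to the weight $\nu=\lambda$, is the circle-criterion form $\operatorname{Re}\big[(1+\kappa_{1}W_{\rho}(i\omega-\lambda))^{*}(1+\kappa_{2}W_{\rho}(i\omega-\lambda))\big]>0$, which is precisely \textbf{(DF2)}. Hence all hypotheses of the frequency theorem hold and it supplies an operator $P\in\mathcal{L}(\mathbb{E};\mathbb{E}^{*})$ satisfying \textbf{(H1)} (with $\mathbb{E}^{+}=\mathbb{E}^{s}(\lambda)$, $\mathbb{E}^{-}=\mathbb{E}^{u}(\lambda)$), \textbf{(H2)} with $j=2$, and \textbf{(H3)} with $\nu=\lambda$, some $\delta>0$ and $\tau_{V}=0$. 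I expect step (i) to be the main obstacle, namely the transcendental root count and the verification that the explicit bound in \textbf{(DF1)} keeps $R$ below the first imaginary-axis crossing. I would also take care in (iii) that the \emph{asymmetric} sector is used directly, i.e. that the sharper inequality \textbf{(DF2)} is invoked rather than the cruder symmetric bound $|W_{\rho}(i\omega-\lambda)|<(\max\{\kappa_{0}-\rho,\ \rho+\delta_{\beta}\})^{-1}$ of Theorem~\ref{TH: DelayODEFreqCond}; a loop transformation symmetrizing the sector would change the relevant characteristic quasi-polynomial (replacing $\rho$ by $(\kappa_{0}-\delta_{\beta})/2$) and hence the eigenvalue count, so it is essential that the eigenvalue bookkeeping be done for $A_{\rho}$, for which \textbf{(DF1)} is stated.
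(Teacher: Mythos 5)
Your proposal is correct and follows essentially the same route as the paper: the paper's proof consists precisely of invoking the frequency theorem of \cite{Anikushin2020FreqDelay} (as in Theorem \ref{TH: DelayODEFreqCond}, but with the asymmetric sector bounds $-\kappa_{0}+\rho \leq F'_{\rho,\beta} \leq \rho+\delta_{\beta}$ yielding the circle-criterion form \textbf{(DF2)}), together with a citation of Theorem 15 of \cite{Smith1992} for the spectral properties of $A_{\rho}$ under \textbf{(DF1)}. The only difference is that you prove the root count of $(\lambda+p)^{3}e^{p\tau}+\rho=0$ inline (correctly, with the first imaginary-axis crossing at $R=(3\pi/2)^{3}\approx 104.6$, so that $84.2$ is indeed a sufficient bound), where the paper simply cites Smith.
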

Note that in Theorem \ref{TH: DelayFirstTheorem} we used a more delicate frequency-domain condition \textbf{(DF2)}, which uses the both bounds $-\kappa_{0}+\rho \leq F'_{\rho,\beta}(\sigma) \leq \rho+\delta_{\beta}$, where $\sigma \in \mathbb{R}$, for the derivative of $F_{\rho,\beta}$. The proof is the same as for Theorem \ref{TH: DelayODEFreqCond} and follows from results in \cite{Anikushin2020FreqDelay}. The spectral properties of the linear part are established in the proof of Theorem 15 from \cite{Smith1992}.

\begin{remark}
	\label{REM: SmithDelayRemark}
	In \cite{Smith1992} instead of conditions \textbf{(DF1)} and \textbf{(DF2)} it was used the more concrete condition
	\begin{equation}
		\label{EQ: SmithFreqIneqGoodwin}
		\kappa_{0} \tau^{3} e^{\lambda \tau} < 84.2
	\end{equation}
    and $\rho= \frac{1}{2} \kappa_{0} + \delta'$, where $\delta'>0$ is sufficiently small such that $\rho$ satisfies $\frac{1}{2} \kappa_{0} < \rho < \frac{1}{2} 84.2 \cdot \tau^{-3}e^{-\lambda \tau}$ (and \textbf{(DF1)} in particular). It is also shown that for these $\rho$'s we have the inequality (here $|\cdot|$ is the usual norm in $\mathbb{C}$)
    \begin{equation}
    |W_{\rho}(-\lambda + i\omega)| < \rho^{-1} \text{ for all } \omega \in \mathbb{R}.
    \end{equation}
    and, consequently, \eqref{EQ: FreqDomainDelaySmith} is satisfied since $\rho$ is a Lipschitz constant of $F_{\rho,\beta}$. It can be shown that in this case \textbf{(DF2)} is also satisfied, i.~e. it is more flexible\footnote{This is not surprising since \textbf{(DF2)} takes into account the lower and upper bounds of the derivative of $F_{\beta,\rho}$ and \eqref{EQ: FreqDomainDelaySmith} uses only the maximum of modules.}. However, in the concrete example below (see Fig. \ref{FIG: DFreq}) having a varying parameter $\rho$ do not give any improvements (at least visually) comparing with the results of \cite{Smith1992}. Moreover, in the statement of our Theorem \ref{TH: DelayFirstTheorem} the parameter $\beta$ is fixed. Under \eqref{EQ: SmithFreqIneqGoodwin} there is no dependence on $\beta$.
\end{remark}

It is clear that the only stationary point of \eqref{EQ: GoodwinRhoBeta}, which lies in the cone of non-negative functions, is $\phi_{0} \equiv (\lambda^{2} \eta_{0}, \lambda \eta_{0}, \eta_{0})=g(\eta_{0})(\lambda^{-1},\lambda^{-2},\lambda^{-3})$, where $\eta_{0}$ satisfy $g(\eta_{0}) = \lambda^{3}\eta_{0}$. Due to the monotonicity of $g$ such $\eta_{0}$ is unique. Note that in virtue of the inequality $g(0) > g(\eta_{0}) > g(\sigma_{\beta})$ the point $\phi_{0}$ lies in $\mathcal{W}_{\beta}$ for all $\beta \geq 1$. Let $0<\theta_{1}<\pi/2$ be the unique number such that $\tau \lambda \tan(\theta_{1}) = \pi - 3 \theta_{1}$. Let $\overline{\mathcal{W}}$ denote the closure of $\mathcal{W}_{1}$ in $\mathbb{E}$.

Analysis of the spectrum for the linearization of \eqref{EQ: GoodwinRhoBeta} at $\phi_{0}$ (see Theorem 15 in \cite{Smith1992} for all calculations) gives the following result.
\begin{theorem}
	\label{TH: DelayEqsFinalTheorem}
	Suppose the assumptions of Theorem \ref{TH: DelayFirstTheorem} hold. If
	\begin{equation}
	\label{EQ: TerminalDelayCondition}
	g'(\eta_{0}) \not= -(\lambda \sec \theta_{1})^{3}
	\end{equation}
	is satisfied then for every $\phi_{0} \in \mathcal{W}_{\beta}$ the $\omega$-limit set $\omega(\phi_{0})$ is either the stationary point $\phi_{0}$ or a periodic trajectory in $\overline{\mathcal{W}}$. If, in addition, $g'(\eta_{0}) < -(\lambda \sec\theta_{1})^{3}$ then there exists at least one periodic orbit in $\overline{\mathcal{W}}$, which is orbitally stable.
\end{theorem}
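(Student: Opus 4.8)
The plan is to recognize that the whole statement is an application of the abstract machinery to the concrete semiflow $\varphi_\beta$: I would first check that $\varphi_\beta$ satisfies every hypothesis of Theorems~\ref{TH: PBTrichotomy} and~\ref{TH: PeriodicOrbitExis}, then locate the unique equilibrium $\phi_0$ inside the attractor and read off its local type from the characteristic equation of the linearization, and finally deduce the trichotomy reduction from Corollary~\ref{COR: TerminalPoints} and the existence of an orbitally stable periodic orbit from Theorem~\ref{TH: PeriodicOrbitExis}.

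First I would assemble the hypotheses. Theorem~\ref{TH: DelayFirstTheorem} shows that \textbf{(DF1)}, \textbf{(DF2)} provide an operator $P$ for which $\varphi_\beta$ satisfies \textbf{(H1)}, \textbf{(H2)} with $j=2$ and \textbf{(H3)}; combined with \textbf{(COM)} and \textbf{(S)} (established above for delay equations) this puts $\varphi_\beta$ under the hypotheses of Theorems~\ref{TH: PBTrichotomy} and~\ref{TH: PeriodicOrbitExis}. Recall that $\mathcal{A}=\operatorname{Cl}\mathcal{W}_{\beta'}$ is an attractor with neighborhood $\mathcal{U}_{\mathcal{A}}=\mathcal{W}_\beta$, and that the only stationary point of \eqref{EQ: DelayGoodwinBeta} lying in $\mathcal{W}_\beta$ (indeed in the non-negative cone) is $\phi_0$, so $\phi_0\in\mathcal{A}$ and no other equilibria are present.

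The key step is the spectral analysis at $\phi_0$. Linearizing \eqref{EQ: DelayGoodwinBeta} at $\phi_0$, where $g_\beta=g$, gives the characteristic equation $(p+\lambda)^3 e^{p\tau}=g'(\eta_0)$. Writing $\lambda+i\omega=\sqrt{\lambda^2+\omega^2}\,e^{i\theta}$ with $\theta=\arctan(\omega/\lambda)\in(0,\pi/2)$, a purely imaginary root $p=i\omega$ forces $3\theta+\omega\tau\equiv\pi\pmod{2\pi}$ together with $(\lambda^2+\omega^2)^{3/2}=|g'(\eta_0)|$; since $\omega=\lambda\tan\theta$, the smallest admissible frequency is governed by $\lambda\tau\tan\theta=\pi-3\theta$, i.e.\ $\theta=\theta_1$, at which $\sqrt{\lambda^2+\omega^2}=\lambda\sec\theta_1$. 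Hence a critical imaginary eigenvalue occurs exactly when $g'(\eta_0)=-(\lambda\sec\theta_1)^3$, and the assumption $g'(\eta_0)\neq-(\lambda\sec\theta_1)^3$ excludes it. Following the computation in Theorem~15 of \cite{Smith1992}, when $g'(\eta_0)>-(\lambda\sec\theta_1)^3$ all roots have negative real part, so by the principle of linearized stability $\phi_0$ is Lyapunov stable; when $g'(\eta_0)<-(\lambda\sec\theta_1)^3$ a single conjugate pair has crossed into the right half-plane. Since any trajectory tending to $\phi_0$ as $t\to-\infty$ is bounded in the past and therefore amenable, the unstable manifold of $\phi_0$ is contained in the two-dimensional manifold $\mathfrak{A}$ and so is at most two-dimensional; thus in this case there are exactly two unstable eigenvalues, and the Unstable Manifold Theorem for delay equations \cite{Hale1977} furnishes a $2$-dimensional local unstable set obeying \textbf{(U1)}--\textbf{(U3)}. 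In either case $\phi_0$ is a terminal point.

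With $\phi_0$ terminal the first assertion follows at once: for any $w_0\in\mathcal{W}_\beta$ we have $\omega(w_0)\subset\mathcal{A}$, and the only equilibrium that $\omega(w_0)$ can contain is $\phi_0$; should it occur, Corollary~\ref{COR: TerminalPoints} forces $\omega(w_0)=\{\phi_0\}$, so alternative \textbf{(T3)} of Theorem~\ref{TH: PBTrichotomy} is excluded and $\omega(w_0)$ is either $\{\phi_0\}$ or a periodic orbit. For the second assertion, under $g'(\eta_0)<-(\lambda\sec\theta_1)^3$ the unique equilibrium $\phi_0$ in $\mathcal{A}$ is an unstable terminal point, whence Theorem~\ref{TH: PeriodicOrbitExis} produces an orbitally stable periodic orbit in $\mathcal{A}$; by the positive invariance and absorption of $\operatorname{Cl}\mathcal{W}_1=\overline{\mathcal{W}}$ recorded in \cite{Smith1992} this orbit in fact lies in $\overline{\mathcal{W}}$, where $g_\beta=g$, so it is a genuine periodic orbit of \eqref{EQ: DelayGoodwin}. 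The main obstacle is precisely the eigenvalue count of the middle step: verifying the transversal crossing of the conjugate pair at $\theta_1$ and ruling out more than two unstable roots, which here is secured by the confinement of the unstable manifold to the two-dimensional $\mathfrak{A}$.
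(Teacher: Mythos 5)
Your proposal is correct and follows essentially the same route as the paper's own (very compressed) proof: deduce hyperbolicity and the unstable eigenvalue count from the characteristic equation $(p+\lambda)^3e^{p\tau}=g'(\eta_0)$ as computed in Theorem 15 of Smith's paper, note that the unstable set is confined to the two-dimensional manifold $\mathfrak{A}$ and that unstable roots come in conjugate pairs so $\phi_0$ is terminal (stable or with a $2$-dimensional unstable set), and then apply Theorem \ref{TH: PBTrichotomy} with Corollary \ref{COR: TerminalPoints} for the dichotomy and Theorem \ref{TH: PeriodicOrbitExis} for the orbitally stable periodic orbit. The only difference is that you spell out details the paper delegates to \cite{Smith1992} (the imaginary-axis crossing analysis and the localization of the orbit in $\overline{\mathcal{W}}$), which is a faithful expansion rather than a different argument.
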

Condition \eqref{EQ: TerminalDelayCondition} shows that $\phi_{0}$ is hyperbolic and since its unstable manifold can be only of even dimension (see \cite{Smith1992}), it is terminal. Thus, the first part of the theorem follows from Theorem \ref{TH: PBTrichotomy} and Corollary \ref{COR: TerminalPoints}. The condition $g'(\eta_{0}) < -(\lambda \sec\theta_{1})^{3}$ implies that the unstable manifold at $\phi_{0}$ is two-dimensional and, consequently, Theorem \ref{TH: PeriodicOrbitExis} is applicable.

Now as in \cite{Smith1992} we consider a special case of \eqref{EQ: DelayGoodwin} with $g(\sigma)= ( 1 + |\sigma|^{3} )^{-1}$. Fig. \ref{FIG: DFreq} shows a numerically obtained region in the space of parameters $(\tau,\lambda)$ for which the conditions of Theorem \ref{TH: DelayEqsFinalTheorem} are satisfied. In this case having the varying parameter $\rho$ seems to cause no improvements (comparing with \cite{Smith1992}) since these regions visually coincide with the one analytically given by \eqref{EQ: SmithFreqIneqGoodwin}.
\begin{figure}
	\centering
	\includegraphics[width=1.\linewidth]{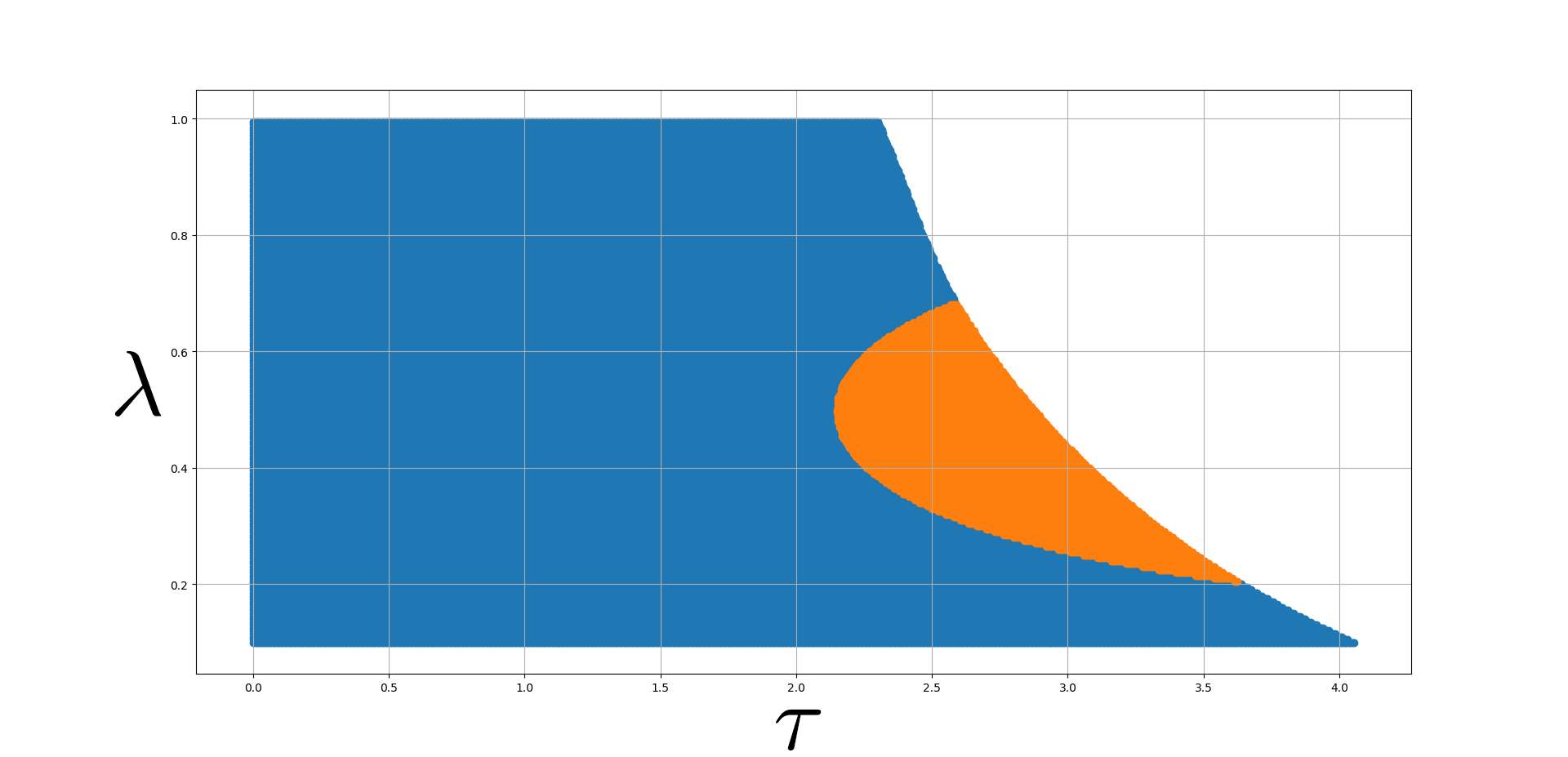}	
	\caption{A numerically obtained region in the space of parameters $(\tau,\lambda)$ of system \eqref{EQ: DelayGoodwin} with $g(\sigma)= ( 1 + |\sigma|^{3} )^{-1}$, for which the conditions of Theorem \ref{TH: DelayEqsFinalTheorem} are satisfied. The blue region corresponds to the case of a stable stationary point. In the orange region the stationary point has a local two-dimensional unstable manifold and therefore the existence of an orbitally stable periodic orbit is guaranteed. Note that the orange region visually coincides with Fig. 2 from \cite{Smith1992}.}
	\label{FIG: DFreq}
\end{figure}
\section{Abstract semilinear parabolic equations}
\label{SEC: ReactionDiffusionSmith}
Let $A \colon \mathcal{D}(A) \subset \mathbb{H}_{0} \to \mathbb{H}_{0}$ be a self-adjoint positive-definite operator acting in a real Hilbert space $\mathbb{H}_{0}$ and such that $A^{-1} \colon \mathbb{H}_{0} \to \mathbb{H}_{0}$ is compact. Then one can define powers of $A$ and a scale of Hilbert spaces $\mathbb{H}_{\alpha}:=\mathcal{D}(A^{\alpha})$ with the scalar products $( v_{1},v_{2})_{\alpha}:=(A^{\alpha}v_{1},A^{\alpha}v_{2})$ for all $v_{1},v_{2} \in \mathcal{D}(A^{\alpha})$ (see, for example, \cite{Chueshov2015}). Let $\alpha \in [0,1)$ be fixed and let $\mathbb{M},\Xi$ be some Hilbert spaces. Suppose that linear bounded operators $C \colon \mathbb{H}_{\alpha} \to \mathbb{M}$ and $B \colon \Xi \to \mathbb{H}_{0}$ are given. Let $F \colon \mathbb{R} \times \mathbb{M} \to \Xi$ be a continuous map such that for some constant $\Lambda>0$ and all $\sigma_{1},\sigma_{2} \in \mathbb{M}$, $t \in \mathbb{R}$ we have
\begin{equation}
	|F(\sigma_{1})-F(\sigma_{2})|_{\Xi} \leq \Lambda |\sigma_{1}-\sigma_{2}|_{\mathbb{M}}.
\end{equation}

Finally, let $K \in \mathcal{L}(\mathbb{H}_{0}) \cap \mathcal{L}(\mathbb{H}_{\alpha})$ be a bounded linear operator\footnote{We discuss the case of bounded $K$ for simplicity. For some problems, consideration of unbounded operators $K$, which are dominated by $A$, may be useful.}. We consider the nonlinear evolution equation
\begin{equation}
	\label{EQ: ParabolicNonAutEquation}
	\dot{v}(t) = (-A+K)v(t) + BF(Cv(t)).
\end{equation}
Following the fixed point arguments of Theorem 4.2.3 in \cite{Chueshov2015}, it can be shown that for any $v_{0} \in \mathbb{H}_{\alpha}$ there exists a unique mild solution $v(t)=v(t,v_{0})$ defined for $t \geq 0$ and such that $v(0)=v_{0}$. Moreover, for every $T>0$ there is a constant $C=C(T)>0$ such that the estimate 
\begin{equation}
	|v(t,v_{1}) - v(t,v_{2})|_{\alpha} \leq C |v_{1}-v_{2}|_{\alpha}
\end{equation}
holds for all $v_{1},v_{2} \in \mathbb{H}_{\alpha}$, and $t \in [0,T]$. From this it follows that $\varphi^{t}(v_{0}):=v(t,v_{0})$, where $t \geq 0$ and $v_{0} \in \mathbb{H}_{\alpha}$, is a semiflow in $\mathbb{E}=\mathbb{H}:=\mathbb{H}_{\alpha}$. Moreover, the map $\varphi^{t} \colon \mathbb{H}_{\alpha} \to \mathbb{H}_{\alpha}$ is compact for all $t > 0$, i.~e. \textbf{(COM)} is satisfied with any $\tau_{com} > 0$.

Since $-A$ generates a $C_{0}$-semigroup in $\mathbb{H}$ and $\mathbb{H}_{\alpha}$, the operator $-A+K$ also is a generator of a $C_{0}$-semigroup (see \cite{EngelNagel2000}). Below we consider numbers $\nu > 0$ such that the operator $-A+K + \nu I$ admits an exponential dichotomy in $\mathbb{H}_{\alpha}$  with the stable space $\mathbb{H}^{s}_{\alpha}(\nu)$ and the unstable space $\mathbb{H}^{u}_{\alpha}(\nu)$.

Let us consider the transfer operator $W(p):=C(-A+K-pI)^{-1}B$ (to be more precise, one should consider complexifications of the corresponding operators).

The following theorem follows from results in Section 5 from \cite{Anikushin2020FreqParab}.
\begin{theorem}
	\label{TH: ParabSemilinFreqCond}
	Suppose there is $\nu > 0$ such that the operator $-A+K + \nu I$ admits an exponential dichotomy in $\mathbb{H}_{\alpha}$ such that $\mathbb{H}_{\alpha}= \mathbb{H}^{s}_{\alpha}(\nu) \oplus \mathbb{H}^{u}_{\alpha}(\nu)$ and $\dim \mathbb{H}^{u}_{\alpha} = j$. Moreover, let the frequency-domain condition
	\begin{equation}
	\label{EQ: FreqParabGeneral}
    \|W(-\nu + i\omega)\|_{\Xi^{\mathbb{C}} \to \mathbb{M}^{\mathbb{C}}} < \Lambda^{-1}, \text{ for all } \omega \in \mathbb{R}.
	\end{equation}
	be satisfied. Then there exists an operator $P \in \mathcal{L}(\mathbb{H}_{\alpha})$ such that the semiflow $\varphi$ in $\mathbb{E} = \mathbb{H}_{\alpha}$ generated by \eqref{EQ: ParabolicNonAutEquation} satisfies \textbf{(H1)} with $\mathbb{E}^{+}= \mathbb{H}^{s}_{\alpha}(\nu)$ and $\mathbb{E}^{-}=\mathbb{H}^{u}_{\alpha}(\nu)$; \textbf{(H2)} with the given $j$ and \textbf{(H3)} with the given $\nu$, $\mathbb{H}=\mathbb{H}_{\alpha}$, some $\delta > 0$ and $\tau_{V} =0$.
\end{theorem}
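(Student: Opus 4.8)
The plan is to recast the squeezing inequality \textbf{(H3)} as a dissipativity statement for an auxiliary linear control system and then to obtain the operator $P$ from the infinite-dimensional frequency theorem of Section 5 in \cite{Anikushin2020FreqParab}. First I would fix two mild solutions $v_{1}(\cdot), v_{2}(\cdot)$ of \eqref{EQ: ParabolicNonAutEquation} and set $w := v_{1} - v_{2}$. Subtracting the equations, $w$ satisfies the linear inhomogeneous problem $\dot{w} = (-A+K)w + B\xi$, where the ``control'' $\xi(t) := F(Cv_{1}(t)) - F(Cv_{2}(t))$ is forced, by the Lipschitz assumption on $F$, to obey the pointwise quadratic constraint
\begin{equation}
	|\xi(t)|_{\Xi}^{2} \leq \Lambda^{2} |Cw(t)|_{\mathbb{M}}^{2} \quad \text{for all } t \geq 0.
\end{equation}
Thus every nonlinear difference trajectory is an admissible pair $(w,\xi)$ for the quadratic form $\mathcal{F}(w,\xi) := \Lambda^{2}|Cw|_{\mathbb{M}}^{2} - |\xi|_{\Xi}^{2} \geq 0$, and it suffices to produce a single self-adjoint $P$ that dissipates $\mathcal{F}$ along all such pairs. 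Since $\mathbb{E}=\mathbb{H}_{\alpha}$ is a Hilbert space, the sought $P \in \mathcal{L}(\mathbb{H}_{\alpha})$ is identified with an element of $\mathcal{L}(\mathbb{E};\mathbb{E}^{*})$ via Riesz, so this is the same object demanded by \textbf{(H1)}.

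Next I would pass to the exponentially weighted system by the substitution $u := e^{\nu t}w$, which satisfies $\dot{u} = (-A+K+\nu I)u + B(e^{\nu t}\xi)$, thereby turning the dichotomy of $-A+K+\nu I$ into the splitting $\mathbb{H}_{\alpha} = \mathbb{H}^{s}_{\alpha}(\nu) \oplus \mathbb{H}^{u}_{\alpha}(\nu)$ needed to set up the associated weighted $L_{2}$ minimization problem; here the transfer operator along the shifted line is exactly $W(-\nu + i\omega)$. The frequency-domain inequality \eqref{EQ: FreqParabGeneral} then reads, for $\eta = W(-\nu+i\omega)\xi$,
\begin{equation}
	\Lambda^{2}|\eta|_{\mathbb{M}^{\mathbb{C}}}^{2} - |\xi|_{\Xi^{\mathbb{C}}}^{2} \leq \left( \Lambda^{2}\|W(-\nu+i\omega)\|^{2} - 1 \right) |\xi|_{\Xi^{\mathbb{C}}}^{2} < 0,
\end{equation}
i.e. it is precisely the strict negativity of $\mathcal{F}$ along the shifted resolvent under which the frequency theorem applies. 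Invoking that theorem yields a bounded self-adjoint $P$ and a constant $\delta>0$ such that, with $V(w)=\langle w, Pw\rangle$, the dissipation relation $\frac{d}{dt}\left(e^{2\nu t}V(w)\right) \leq -\delta e^{2\nu t}|w|_{\mathbb{H}_{\alpha}}^{2}$ holds for every admissible pair. Integrating from $l$ to $r$ and specializing to $w = \varphi^{t}(v_{1}) - \varphi^{t}(v_{2})$ gives \textbf{(H3)} with $\mathbb{H}=\mathbb{H}_{\alpha}$ and $\tau_{V}=0$. The signature of $P$ furnished by the theorem is inherited from the dichotomy: $P$ is positive on the stable subspace and negative on the $j$-dimensional unstable subspace, so setting $\mathbb{E}^{+}=\mathbb{H}^{s}_{\alpha}(\nu)$ and $\mathbb{E}^{-}=\mathbb{H}^{u}_{\alpha}(\nu)$ yields \textbf{(H1)} and \textbf{(H2)} with the given $j$.

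The main obstacle is not the formal chain of reductions above but the verification that the parabolic data genuinely fit the functional-analytic hypotheses of the frequency theorem in \cite{Anikushin2020FreqParab}. One must check the well-posedness of the weighted quadratic minimization in the correct $L_{2}$-scale, the admissibility of the observation operator $C \colon \mathbb{H}_{\alpha}\to\mathbb{M}$ (which relies on the analytic smoothing of the semigroup generated by $-A+K$ to control $Cw$ over the weighted time integral), and the nondegeneracy conditions that play the role classically occupied by controllability. It is exactly these points, together with the passage from the pointwise dissipation relation to the integral form of \textbf{(H3)}, that are established in Section 5 of \cite{Anikushin2020FreqParab}; here they enter only through the cited results, so the proof reduces to matching the quadratic form $\mathcal{F}$ and the shifted transfer operator $W(-\nu+i\omega)$ to the data of that theorem.
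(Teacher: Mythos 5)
Your proposal is correct and takes essentially the same route as the paper: the paper's entire ``proof'' of Theorem \ref{TH: ParabSemilinFreqCond} is the single remark that it follows from the results of Section 5 in \cite{Anikushin2020FreqParab}, i.e.\ precisely the application of the frequency theorem that you spell out (difference trajectories as admissible pairs of a linear control system with the quadratic constraint $\Lambda^{2}|Cw|^{2}_{\mathbb{M}}-|\xi|^{2}_{\Xi}\geq 0$, the exponential weight $e^{\nu t}$ matching the dichotomy, and the frequency inequality \eqref{EQ: FreqParabGeneral} as negativity of that form along the shifted resolvent). Your closing paragraph correctly identifies that the genuinely technical content — admissibility, well-posedness of the weighted quadratic minimization, and the sign/signature properties of $P$ on $\mathbb{H}^{s}_{\alpha}(\nu)$ and $\mathbb{H}^{u}_{\alpha}(\nu)$ — lives in the cited reference, which is exactly how the paper treats it.
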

In \cite{Smith1994PB,Smith1994} R.~A.~Smith studied certain reaction-diffusion systems (i.~e. when $\alpha = 0$), including the FitzHugh-Nagumo system and diffusive Goodwin equations, in domains $\Omega \subset \mathbb{R}^{d}$ with $d \leq 3$. His frequency-domain condition is a particular case of \eqref{EQ: FreqParabGeneral}. Moreover, under his assumptions the measurement space $\mathbb{M}$ cannot be finite-dimensional, which excludes, for example, the case when $C$ is a linear functional.

Now let us consider the simplest case when $K=0$, $\mathbb{M} = \mathbb{H}_{\alpha}$, $\Xi = \mathbb{H}_{0}$ and $B=\operatorname{Id}$, $C=\operatorname{Id}$. Let $0 < \lambda_{1} \leq \lambda_{2} \leq \ldots$ be the eigenvalues of $A$. Let us take $j$ such that $\lambda_{j} < \lambda_{j+1}$ and $\nu \in (\lambda_{j},\lambda_{j+1})$. Then \eqref{EQ: FreqParabGeneral} takes the form
\begin{equation}
	\label{EQ: SpetrGapConResolventIneq}
	\| (A+(-\nu + i \omega)I)^{-1} \|_{\mathbb{H}^{\mathbb{C}}_{0} \to \mathbb{H}^{\mathbb{C}}_{\alpha}} \leq \Lambda^{-1} \text{ for all } \omega \in \mathbb{R}.
\end{equation}
One can show (see \cite{Anikushin2020FreqParab}) that there is $\nu \in (\lambda_{j},\lambda_{j+1})$, which minimizes the left-hand side of \eqref{EQ: SpetrGapConResolventIneq}. The corresponding inequality takes the form
\begin{equation}
	\frac{\lambda_{j+1}-\lambda_{j}}{\lambda^{\alpha}_{j}+\lambda^{\alpha}_{j+1}} > \Lambda,
\end{equation}
which is known as the Spectral Gap Condition \cite{Temam1997}, which guarantees the existence of $j$-dimensional inertial manifolds for \eqref{EQ: ParabolicNonAutEquation}. From this it is clear that the general frequency-domain condition given by \eqref{EQ: FreqParabGeneral} provides a lot of flexibility. The control operator $B$ cuts the input of the resolvent and the measurement operator $C$ cuts the output that sometimes makes the norm in \eqref{EQ: FreqParabGeneral} much smaller than the norm of a single resolvent. The presence of the operator $K$ may help to achieve required spectral properties or to provide more sharp frequency-domain conditions.

In the case of parabolic equations studying of stationary points, which may be inhomogeneous in space, is a challenging task. Thus, applications of Theorem \ref{TH: PeriodicOrbitExis} are restricted to the cases, in which we know all the stationary points (at least in a some bounded region). In \cite{Smith1994PB,Smith1994} R.~A.~Smith used a simple criterion, which guarantees that the stationary point is unique and, consequently, it may be easily calculated if it is space-homogeneous.
\section{Other applications and further developments}
\label{SEC: OtherApps}
The presented theory is potentially applicable to parabolic equations with delay. It is known (see, for example, the monograph of I.~Chueshov \cite{Chueshov2015}) that such equations are well-posed in the space $\mathbb{E} = C([-\tau,0];\mathbb{H}_{\alpha})$, where $\mathbb{H}_{\alpha}$ is defined in the previous section, and the corresponding semiflow satisfies \textbf{(COM)}. The frequency theorem from \cite{Anikushin2020FreqDelay} allows to construct quadratic functionals for certain reaction-diffusion systems ($\alpha = 0$) with delay. Some clarifications in \cite{Anikushin2020FreqDelay} are required to cover the general case $\alpha \in [0,1)$ (such a modification should cover both versions of the frequency theorem from \cite{Anikushin2020FreqParab,Anikushin2020FreqDelay}). The well-posedness of such equations in the Hilbert space $\mathbb{H} = \mathbb{H}_{\alpha} \times L_{2}(-\tau,0;\mathbb{H}_{\alpha})$ also requires further developments. A simple approach is suggested in \cite{Anikushin2020Semigroups}, where one may use the well-posedness in  $C([-\tau,0];\mathbb{H}_{\alpha})$, embed the semiflow in $\mathbb{H}$ and then show that it can be uniquely extended by continuity to a semiflow in $\mathbb{H}$. For applications of Theorem \ref{TH: PeriodicOrbitExis} local analysis of stationary states is required. The present author does not know sources, where the Stable/Unstable Manifold Theorem for such equations is proved. In Y.~Tan et. al \cite{TanHuangWang2018} it is considered some epidemiological model given by a certain class of reaction-diffusion systems with delay and stability analysis is presented. It may be interesting to obtain for such a system conditions for the existence of an orbitally stable periodic orbit using this theory.

Another field of applications is parabolic equations with boundary controls (nonlinear boundary conditions). Here we can use the frequency theorem from \cite{Likhtarnikov1976} to construct quadratic functionals for such problems (see \cite{Anikushin2020Red} for a simple application). However, \cite{Likhtarnikov1976} contains restrictive stabilizability and regularity assumptions, which should be relaxed in the spirit of recent versions of the frequency theorem. This class, to the best of our knowledge, also requires developments of methods for studying of local analysis and well-posedness. For parabolic boundary control problems with delay the situation is much harder.
\section*{Acknowledgements}

I thank V.~Reitmann for many useful discussions on the topic and A.~O.~Romanov for helping in writing the Python program with which Fig. \ref{FIG: DFreq} was obtained.


\bibliographystyle{amsplain}

\end{document}